\title{Robust Reinforcement Learning: A Case Study in Linear Quadratic Regulation}
\author{

    Bo Pang, Zhong-Ping Jiang\\
}
\newtheorem{theorem}{Theorem}
\newtheorem{lemma}{Lemma}
\newtheorem{definition}{Definition}
\newtheorem{assumption}{Assumption}
\newtheorem{problem}{Problem}
\newtheorem{proc}{Procedure}
\DeclareMathOperator{\vect}{vec}
\DeclareMathOperator{\svec}{svec}
\DeclareMathOperator{\blkdiag}{blkdiag}
\begin{document}

\maketitle

\begin{abstract}
This paper studies the robustness of reinforcement learning algorithms to errors in the learning process. Specifically, we revisit the benchmark problem of discrete-time linear quadratic regulation (LQR) and study the long-standing open  question: Under what conditions is the policy iteration method robustly stable from a dynamical systems perspective? Using advanced stability results in control theory, it is shown that policy iteration for LQR is inherently robust to small errors in the learning process and enjoys small-disturbance input-to-state stability: whenever the error in each iteration is bounded and small, the solutions of the policy iteration algorithm are also bounded, and, moreover, enter and stay in a small neighbourhood of the optimal LQR solution. As an application, a novel off-policy optimistic least-squares policy iteration for the LQR problem is proposed, when the system dynamics are subjected to additive stochastic disturbances. The proposed new results in robust reinforcement learning are validated by a numerical example.
\end{abstract}

\section{Introduction}
As an important and popular method in reinforcement learning (RL), policy iteration has been widely studied by researchers and utilized in different kinds of real-life applications by practitioners \cite{BertsekasOptimalControl2,sutton2018reinforcement}. Policy iteration involves two steps, \textit{policy evaluation} and \textit{policy improvement}. In policy evaluation, a given policy is evaluated based on a scalar performance index. Then this performance index is utilized to generate a new control policy in policy improvement. These two steps are iterated in turn, to find the solution of the RL problem at hand. When all the information involved in this process is exactly known, the convergence to the optimal solution can be provably guaranteed, by exploiting the monotonicity property of the policy improvement step. That is, the performance of the newly generated policy is no worse than that of the given policy in each iteration. Over the past decades, various versions of policy iteration have been proposed, for diverse optimal control problems, see \cite{BertsekasOptimalControl2,sutton2018reinforcement,LewisBookOptimalControl,jiang2017robust,8c288426ea8c40d2a53f0731c99311a4} and the references therein.

In reality, policy evaluation or policy improvement can hardly be implemented precisely, because of the existence of various errors, which may be induced by function approximation, state estimation, sensor noise, external disturbance and so on. Therefore, a natural question to ask is: when is a policy iteration algorithm robust to errors in the learning process? In other words, under what conditions on the errors, does the policy iteration still converge to (a neighbourhood of) the optimal solution? And how to quantify the size of this neighbourhood? In spite of the popularity and empirical successes of policy iteration, its robustness issue has not been fully understood yet in theory, due to the inherent nonlinearity of the process \cite{bertsekas2011approximate}. The problem becomes more complex when the state and action spaces are unbounded and continuous, which are common in RL problems of physical systems such as robotics and autonomous cars \cite{DBLPjournals}. Indeed, in this case the stability issue needs to be addressed, to avoid the selection of destabilizing policies that drive the states of the closed-loop system into the infinity or an unsafe region.

In this paper, we investigate the robustness of policy iteration for the discrete-time linear quadratic regulator (LQR) problem, which was firstly proposed in \cite{1099755}. Even if the LQR is the most basic and important optimal control problem with unbounded, continuous state and action spaces \cite{BertsekasOptimalControl2}, the robustness of its associated policy iteration to errors in the learning process has not been fully investigated. The main idea of this paper is to regard the policy iteration as a dynamical system, and then utilize the concepts of exponential stability and input-to-state stability in control theory to analyze its robustness \cite{sontag2008input}. To be more specific, we firstly prove that the optimal LQR solution is a locally exponentially stable equilibrium of the exact policy iteration (see Lemma \ref{lemma_local_exponentail_stable}). Then based on this observation, we show that the policy iteration with errors is locally input-to-state stable, if the errors are regarded as the disturbance input (see Lemma \ref{lemma_local_ISS}). That is, if the policy iteration starts from an initial solution close to the optimal solution, and the errors are small and bounded, the discrepancies between the solutions generated by the policy iteration and the optimal solution will also be small and bounded. Thirdly, we demonstrate that for any initial stabilizing control gain, as long as the errors are small, the approximate solution given by policy iteration will eventually enter a small neighbourhood of the optimal solution (see Theorem \ref{theorem_global}). Finally, a novel off-policy model-free RL algorithm, named optimistic least-squares policy iteration (O-LSPI), is proposed for the LQR problem with dynamics perturbed by additive stochastic disturbances. Our robustness result is applied to show the convergence of this off-policy O-LSPI (see Theorem \ref{theorem_stochastic_algorithm_convergence}). Experiments on a numerical example validate our results.

Our main contributions are two-fold. First, we provide a control-theoretic robustness analysis for the policy iteration of discrete-time LQR. Second, we propose a novel off-policy RL algorithm O-LSPI with provable convergence.

In the rest of this paper, we first present some preliminaries, followed by the robustness analysis and the off-policy O-LSPI. Then we present the experimental results, discuss some related work, and close the paper with some concluding remarks.

\subsection{Notations} 
$\mathbb{R}$ ($\mathbb{R}_+$) is the set of all real (nonnegative) numbers; $\mathbb{Z}_+$ denotes the set of nonnegative integers; $\mathbb{S}^{n}$ is the set of all real symmetric matrices of order $n$; $\otimes$ denotes the Kronecker product; $I_n$ denotes the identity matrix with dimension $n$; $\Vert \cdot\Vert_F$ is the Frobenius norm; $\Vert\cdot\Vert_2$ is the 2-norm for vectors and the induced 2-norm for matrices; for signal $Z:\mathbb{F}\rightarrow \mathbb{R}^{n\times m}$, $\Vert Z\Vert_\infty$ denotes its $l^\infty$-norm when $\mathbb{F} = \mathbb{Z}_+$, and $L^\infty$-norm when $\mathbb{F} = \mathbb{R}_+$. For matrices $X\in \mathbb{R}^{m\times n}$, $Y\in \mathbb{S}^m$, and vector $v\in \mathbb{R}^{n}$, define 
\begin{align*}
    \vect(X) &= [\begin{array}{cccc}
    X_1^T & X_2^T & \cdots & X_n^T
    \end{array}]^T,\ \tilde{v} = \svec(vv^T),\\
    \svec(Y) &= [y_{11},\sqrt{2}y_{12},\cdots,\sqrt{2}y_{1m},y_{22},\sqrt{2}y_{23}, \\ 
    &\cdots,\sqrt{2}y_{m-1,m},y_{m,m}]^T\in \mathbb{R}^{\frac{1}{2}m(m+1)},
\end{align*}
where $X_i$ is the $i$th column of $X$. For $Z\in\mathbb{R}^{m\times n}$, define $\mathcal{B}_{r}(Z) = \{X\in\mathbb{R}^{m\times n}\vert \Vert X - Z\Vert_F<r\}$ and $\mathcal{\bar{B}}_{r}(Z)$ as the closure of $\mathcal{B}_{r}(Z)$. $Z^\dagger$ is the Moore-Penrose inverse of matrix $Z$. $\blkdiag(Z_1,Z_2,\cdots,Z_N)$ refers to
the block-diagonal matrix that consists of a set of matrices $Z_1,Z_2,\cdots,Z_N$.

\section{Preliminaries}\label{section_problem_formulation}
Consider linear time-invariant systems of the form
\begin{equation}\label{LTI_sys}
    x_{k+1} = Ax_k + Bu_k,\quad x_0 = x_{ini}
\end{equation}
where $x_k\in\mathbb{R}^n$ is the system state, $u_k\in\mathbb{R}^m$ is the control input, $x_{ini}\in\mathbb{R}^n$ is the initial condition, $A\in\mathbb{R}^{n\times n}$ and $B\in\mathbb{R}^{n\times m}$. $\left(A,B\right)$ is controllable, that is, $[B, AB, A^2B, \cdots, A^{n-1}B]$ has full row rank. The classic LQR problem is to find a controller $u$ in order to minimize the following cost functional
\begin{equation}\label{cost_deterinistic}
    J(x_0,u) = \sum_{k=0}^\infty c(x_k,u_k),
\end{equation}
where $c(x_k,u_k) = x_k^TSx_k + u_k^TRu_k$, $S\in\mathbb{S}^n$ is positive semidefinite and $R\in\mathbb{S}^n$ is positive definite. $(A,S^{1/2})$ is observable, that is, $(A^T,S^{1/2})$ is controllable. It is well-known that in such a setting, the LQR problem admits a unique optimal controller $u^* = -K^*x$, where
\begin{equation}\label{optimal_gain}
    K^*=(R+B^TP^*B)^{-1}B^TP^*A
\end{equation}
with $P^*\in\mathbb{S}^n$ the unique positive definite solution of the algebraic Riccati equation (ARE)
\begin{equation}\label{ARE}
A^TPA - P - A^TPB(R + B^TPB)^{-1}B^TPA + S = 0.
\end{equation}
In addition, $A-BK^*$ is stable, i.e., the spectral radius $\rho(A-BK^*)<1$. See \cite[Section 2.4]{LewisBookOptimalControl} for details. For convenience, a control gain $K\in\mathbb{R}^{m\times n}$ is said to be stabilizing if $A-BK$ is stable.

\subsection{Policy Iteration for LQR}
For any stabilizing control gain $K\in\mathbb{R}^{m\times n}$, the cost \eqref{cost_deterinistic} with $u_k = -Kx_k$ is a quadratic function of the initial state \cite[Section 2.4]{LewisBookOptimalControl}. Specifically, $J(x_0,-Kx) = x_0^TP_Kx_0$, where $P_K\in\mathbb{S}^n$ is the unique positive definite solution of the Lyapunov equation
\begin{equation}\label{algebraic_Lyapunov_equation}
    (A-BK)^TP_K(A-BK) - P_K + S + K^TRK = 0.
\end{equation}
Define function
\begin{equation*}
\begin{split}
    G(P_K) &= \left[
                \begin{array}{c|c}
                [G(P_K)]_{xx} & [G(P_K)]_{ux}^T \\
                \hline
                [G(P_K)]_{ux} & [G(P_K)]_{uu}
        \end{array}
        \right] \\ 
        &\triangleq \left[\begin{array}{cc}
                S + A^TP_KA - P_K & A^TP_KB \\
                B^TP_KA & R + B^TP_KB
        \end{array}\right].            
\end{split}
\end{equation*}
Then \eqref{algebraic_Lyapunov_equation} can be rewritten as
$$\mathcal{H}(G(P_K),K) = 0,$$
where
$$\mathcal{H}(G(P_K),K) \triangleq \left[\begin{array}{cc}
                I_n &  -K^T
                \end{array}\right] G(P_K)\left[\begin{array}{c}
                I_n  \\
                -K
                \end{array}\right].$$
The policy iteration for LQR is presented below, which is an equivalent reformulation of the original results in \cite{1099755}.
\begin{proc}[Exact Policy Iteration]\label{procedure_policy_itration}
\ \par
\begin{enumerate}[label=\arabic*),leftmargin = 0.5cm]
    \item Choose a stabilizing control gain $K_1$, and let $i=1$.
    \item\label{standard_PI_PE} (Policy evaluation) Evaluate the performance of control gain $K_i$, by solving 
    \begin{equation}\label{RPI_PE}
       \mathcal{H}(G_i,K_i)=0
    \end{equation}
    for $P_i\in\mathbb{S}^n$, where $G_i\triangleq G(P_i)$.
    \item (Policy improvement) \label{standard_PI_PI} Obtain an improved policy
    \begin{equation}\label{RPI_PI}
        K_{i+1} = [G_i]^{-1}_{uu}[G_i]_{ux}.
    \end{equation}
    \item Set $i\gets i+1$ and go back to Step \ref{standard_PI_PE}.
\end{enumerate}
\end{proc}
The following convergence results of Procedure \ref{procedure_policy_itration} were also provided in \cite{1099755}.
\begin{theorem}\label{theorem_standard_PI}
    In Procedure \ref{procedure_policy_itration} we have:
    \begin{enumerate}[label=\roman*), leftmargin=0.6cm]
        \item $A-BK_i$ is stable for all $i=1,2,\cdots$.  \label{Hurwitz_matrix}
        \item $P_1\geq P_2 \geq P_3\geq \cdots \geq P^*$.\label{PI_monotone}
        \item $\lim_{i\rightarrow\infty}P_i=P^*$, $\lim_{i\rightarrow\infty}K_i = K^*$.\label{PI_converge}
    \end{enumerate}
\end{theorem}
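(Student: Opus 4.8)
The plan is to prove all three claims simultaneously by induction on $i$, the base case being that $K_1$ is stabilizing by the choice in Step 1. The engine of the whole argument is a single completion-of-squares identity relating the one-step Bellman residual of an \emph{arbitrary} gain to the current value matrix $P_i$. Fix a stabilizing $K_i$, let $P_i>0$ solve \eqref{algebraic_Lyapunov_equation}, and set $A_i=A-BK_i$, $W_i=R+B^TP_iB>0$ (invertible since $R>0$, $P_i\ge 0$, so the improved gain is always well defined). For any gain $K$, expanding $S+K^TRK+(A-BK)^TP_i(A-BK)-P_i$ and completing the square in $K$ shows it equals $\bigl(S+A^TP_iA-P_i-A^TP_iBW_i^{-1}B^TP_iA\bigr)+(K-K_{i+1})^TW_i(K-K_{i+1})$, where $K_{i+1}=W_i^{-1}B^TP_iA$ is exactly \eqref{RPI_PI}. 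Substituting $K=K_i$ and using that $P_i$ solves \eqref{algebraic_Lyapunov_equation} forces the $K$-independent bracket to equal $-(K_i-K_{i+1})^TW_i(K_i-K_{i+1})$, so for every $K$ the residual reduces to $(K-K_{i+1})^TW_i(K-K_{i+1})-(K_i-K_{i+1})^TW_i(K_i-K_{i+1})$.

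Second, I specialize this to $K=K_{i+1}$ to obtain the Lyapunov inequality $A_{i+1}^TP_iA_{i+1}-P_i=-(S+K_{i+1}^TRK_{i+1}+M_i)\le 0$, with $M_i=(K_i-K_{i+1})^TW_i(K_i-K_{i+1})\ge 0$, which drives claim i). Since $P_i>0$, this inequality alone yields only $\rho(A_{i+1})\le 1$; upgrading it to strict stability $\rho(A_{i+1})<1$ is the delicate point and the main obstacle. I would argue by contradiction: if $A_{i+1}v=\lambda v$ with $|\lambda|=1$ and $v\ne 0$, then the inequality forces $v^*(S+K_{i+1}^TRK_{i+1})v=0$, hence $S^{1/2}v=0$ and (since $R>0$) $K_{i+1}v=0$; the latter gives $Av=A_{i+1}v=\lambda v$, so $v$ is an eigenvector of $A$ on the unit circle annihilated by $S^{1/2}$, contradicting observability of $(A,S^{1/2})$ via the PBH test. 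This closes the induction for i) and guarantees $P_{i+1}$ exists.

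Third, for monotonicity ii) I would subtract the defining equation of $P_{i+1}$ from the displayed identity for $P_i$; the difference $D_i=P_i-P_{i+1}$ then satisfies the discrete Lyapunov equation $A_{i+1}^TD_iA_{i+1}-D_i=-M_i\le 0$ with $A_{i+1}$ stable, whose unique solution $D_i=\sum_{k\ge 0}(A_{i+1}^k)^TM_iA_{i+1}^k$ is positive semidefinite, giving $P_i\ge P_{i+1}$. The lower bound $P_i\ge P^*$ follows from the same device applied at the optimum: completing the square at $P^*$ and invoking the ARE \eqref{ARE} shows $S+K^TRK+(A-BK)^TP^*(A-BK)-P^*=(K-K^*)^T(R+B^TP^*B)(K-K^*)\ge 0$ for every $K$, so $E=P_K-P^*$ solves $(A-BK)^TE(A-BK)-E=-(K-K^*)^T(R+B^TP^*B)(K-K^*)\le 0$ with $A-BK$ stable, whence $E\ge 0$; taking $K=K_i$ yields $P_i\ge P^*$.

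Finally, for convergence iii), the sequence $\{P_i\}$ is nonincreasing in the Loewner order and bounded below by $P^*>0$, so $x^TP_ix$ converges for every $x$ and $P_i\to P_\infty\ge P^*$ entrywise. Passing to the limit in \eqref{RPI_PI} (the map $P\mapsto(R+B^TPB)^{-1}B^TPA$ is continuous since $R+B^TPB>0$) gives $K_i,K_{i+1}\to K_\infty=(R+B^TP_\infty B)^{-1}B^TP_\infty A$, and passing to the limit in \eqref{algebraic_Lyapunov_equation} shows $P_\infty$ satisfies \eqref{ARE}; by uniqueness of its positive definite solution, $P_\infty=P^*$ and $K_\infty=K^*$. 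I expect the strict-stability upgrade in step i) to be the crux, since every later step silently relies on $A_{i+1}$ being strictly stable for the Lyapunov solutions to exist and to be representable as the convergent series used above.
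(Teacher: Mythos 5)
Your proof is correct, but there is nothing in the paper to compare it against: the paper does not prove this theorem, it imports it wholesale from Hewer's 1971 paper (the citation \cite{1099755} following Procedure \ref{procedure_policy_itration}), and later results (e.g.\ the proof of Theorem \ref{theorem_global} in Appendix D) \emph{use} Theorem \ref{theorem_standard_PI} as a known fact, so the remark that Theorem \ref{theorem_global} ``implies'' it with zero disturbance cannot be read as the paper's proof without circularity. Your argument is the classical one, and it is sound: the completion-of-squares identity you build everything on is exactly the identity \eqref{ARE_reformulation} that the paper derives in Appendix B (there specialized to $P^*$ to get the quadratic convergence rate, here specialized to $K=K_i$ and $K=K_{i+1}$ to get monotone improvement); the series representation and order-reversal of $\mathcal{L}_X^{-1}$ that you invoke for $D_i\geq 0$ and $P_K-P^*\geq 0$ are precisely items \ref{Lyapunov_solution} and \ref{Lyapunov_monotone} of Lemma \ref{Lyapunov_properties} in Appendix A. You are also right to flag the upgrade from $\rho(A_{i+1})\leq 1$ to $\rho(A_{i+1})<1$ as the delicate step, and your PBH argument handles it correctly: an eigenvector $v$ of $A_{i+1}$ on the unit circle is forced into $\ker S^{1/2}\cap\ker K_{i+1}$, hence is an unobservable unit-circle mode of $(A,S^{1/2})$, contradicting the standing observability assumption. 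One presentational point: in the limit passage for iii) you should note explicitly that $P_\infty\geq P^*>0$ is what licenses appealing to uniqueness of the \emph{positive definite} solution of \eqref{ARE}, and that continuity of $P\mapsto(R+B^TPB)^{-1}B^TPA$ plus the closed-form Lyapunov equation is enough to pass to the limit without worrying about uniformity over $i$; you do say both things, so the proof stands as written.
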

\subsection{Problem Formulation}
In Procedure \ref{procedure_policy_itration}, the exact knowledge of $A$ and $B$ is required, as the solution to \eqref{RPI_PE} relies upon $A$ and $B$. So, the exact policy iteration is model-based. However, in practice, very often we only have access to incomplete information required to solve the problem. In other words, each policy evaluation step will result in inaccurate estimation. Thus we are interested in studying the following problem.
\begin{problem}\label{problem_robustness}
If $G_i$ is replaced by an approximated matrix $\hat{G}_i$, will the conclusions in Theorem \ref{theorem_standard_PI} still hold?
\end{problem}
The difference between $\hat{G}_i$ and $G_i$ can be attributed to errors from various sources. One example comes from the problem of using reinforcement learning method to find the optimal solutions for LQR when \eqref{LTI_sys} is subjected to additive external disturbances. Concretely, consider system \eqref{LTI_sys} perturbed by external noise
\begin{equation}\label{LTI_sys_stochastic}
    x_{k+1} = Ax_k + Bu_k + Cw_k, \quad x_0 = x_{ini}
\end{equation}
where $C\in\mathbb{R}^{n\times q}$, $w_k\in\mathbb{R}^q$ is drawn i.i.d. from the standard Gaussian distribution $\mathcal{N}(0,I_q)$, and matrices $A$, $B$ and $C$ are unknown. Since the information about system matrices is unavailable, we need to implement the policy evaluation using input/state data. Due to the existence of unmeasurable stochastic noise $w_k$, generally we could only obtain an estimation $\hat{G}_i$ of the true $G_i$ from the noise-corrupted input/state data. Other sources that cause the difference between $\hat{G}_i$ and $G_i$ include but are not limited to: the estimation errors of $A$ and $B$ in indirect adaptive control, system identification and model-based reinforcement learning \cite{astrom1995adaptive,Ljung1999identification,pmlr-v99-tu19a}; the residual caused by an early termination of the iteration to numerically solve ARE (\ref{ARE}), in order to save computational efforts \cite{hylla2011extension}; approximate values of $S$ and $R$ in inverse optimal control/imitation learning, due to the absence of exact knowledge of the cost function \cite{Levine2012inverseOptimalControl,Monfort2015inverseLQR}.

In this work, using the concept of exponential stability and input-to-state stability in control theory, we provide an answer to Problem \ref{problem_robustness}. Moreover, we provide the convergence analysis of the novel O-LSPI when it is applied to solve the LQR problem for uncertain systems \eqref{LTI_sys_stochastic}.

\subsection{Notions of Exponential and Input-to-State Stability}
Consider a dynamical system of the general form
\begin{equation}\label{nonlinear_sys}
    x_{k+1} = f(x_k,u_k),\quad x_0 = x_{ini},
\end{equation}
where $x_k\in\mathbb{R}^n$, $u_k\in\mathbb{R}^m$, $f:\mathbb{R}^n\times\mathbb{R}^m\rightarrow\mathbb{R}^n$ is continuous, and $x^*$ is an equilibrium of $x_{k+1} = f(x_k,0)$ when $u_k=0$ for all $k\in\mathbb{Z}_+$. The concepts of exponential and input-to-state stability for \eqref{nonlinear_sys} are recalled in this subsection. See \cite{JIANG20042129} for more details. \begin{definition}\label{definition_exp_stab_nonlinear}
For \eqref{nonlinear_sys} with $u_k=0$ for all $k\in\mathbb{Z}_+$, $x^*$ is a locally exponentially stable equilibrium if there exists a $\delta>0$, such that for some $a>0$ and $0<b<1$,
$$\Vert x_k - x^*\Vert_2\leq ab^k\Vert x_{ini}-x^*\Vert_2$$
for all $x_{ini}\in\mathcal{B}_{\delta}(x^*)$. If $\delta=+\infty$, then $x^*$ is a globally exponentially stable equilibrium.
\end{definition}
The exponential stability implies not only the convergence, but also the convergence rate of \eqref{nonlinear_sys}. When the input signal is not zero, the input-to-state stability characterizes how the solution of \eqref{nonlinear_sys} is affected by the input signal.
\begin{definition}
A function $\gamma:\mathbb{R}_+\rightarrow\mathbb{R}_+$ is said to be of class $\mathcal{K}$ if it is continuous, strictly increasing and vanishes at the origin. A function $\beta:\mathbb{R}_+\times \mathbb{R}_+\rightarrow\mathbb{R}_+$ is said to be of class $\mathcal{K}\mathcal{L}$ if $\beta(\cdot,t)$ is of class $\mathcal{K}$ for every fixed $t\in\mathbb{R}_+$ and, for every fixed $r\geq 0$, $\beta(r,t)$ decreases to $0$ as $t\rightarrow\infty$.
\end{definition}
\begin{definition}\label{Definition_local_ISS}
System \eqref{nonlinear_sys} is locally input-to-state stable if there exist some $\alpha_1>0$, some $\alpha_2>0$, some $\beta\in\mathcal{K}\mathcal{L}$ and some $\gamma\in\mathcal{K}$, such that for each $u$ and each $x_{ini}$ satisfying $x_{ini}\in\mathcal{B}_{\alpha_1}(x^*)$, $\Vert u\Vert_\infty<\alpha_2$, the corresponding solution $x_k$ satisfies
$$\Vert x_k-x^*\Vert_2\leq \beta(\Vert x_{ini}-x^*\Vert_2,k) + \gamma(\Vert u\Vert_\infty).$$
\end{definition}
Literally speaking, the local input-to-state stability implies that the distance from the state to the equilibrium is bounded if the input signal is small and the initial state is close to the equilibrium. In addition, the effect of the initial condition vanishes as time goes to infinity.

\section{Robustness Analysis of Policy Iteration}\label{section_robust_analysis}
Consider the policy iteration in the presence of errors.
\begin{proc}[Inexact Policy Iteration]\label{procedure_robust_policy_iteration}
\ \par
 \begin{enumerate}[label=\arabic*),leftmargin=0.5cm]
        \item Choose a stabilizing control gain $\hat{K}_1$, and let $i=1$.
        \item\label{inexact_PI_PE} (Inexact policy evaluation) Obtain $\hat{G}_i = \tilde{G}_i + \Delta G_i$, where $\Delta G_i\in\mathbb{S}^{m+n}$ is a disturbance,
        $\tilde{G}_i\triangleq G(\tilde{P}_i)$
        and $\tilde{P}_i\in\mathbb{S}^n$ satisfy
        \begin{equation}\label{eRPI_PE}
        \mathcal{H}(\tilde{G}_i,\hat{K}_i)=0,
        \end{equation}
        and $J(x_0,-\hat{K}_ix) = x_0^T\tilde{P}_ix_0$ is the true cost induced by control gain $\hat{K}_i$, if $\hat{K}_i$ is stabilizing.
        \item (Policy update) Construct a new control gain
        \begin{equation}\label{eRPI_PI}
            \hat{K}_{i+1} = [\hat{G}_i]_{uu}^{-1}[\hat{G}_i]_{ux}.
        \end{equation}
        \item Set $i\gets i+1$ and go back to Step \ref{inexact_PI_PE}.
    \end{enumerate}
\end{proc}

We firstly show that the exact policy iteration Procedure \ref{procedure_policy_itration}, viewed as a dynamical system, is locally exponentially stable at $P^*$. Then based on this result, we show that the inexact policy iteration, viewed as a dynamical system with $\Delta G_i$ as the input, is locally input-to-state stable.

For $X\in\mathbb{R}^{n\times n}$, $Y\in\mathbb{R}^{n\times n}$, define
\begin{equation*}
    \begin{split}
        \mathscr{A}(X) &= X^T\otimes X^T- I_n\otimes I_n,\ \mathcal{L}_{X}(Y)=X^TYX - Y,\\  
        \mathscr{K}(Y) &= \mathscr{R}^{-1}(Y)B^TYA,\\
        \mathscr{R}(Y) &= R + B^TYB,\quad \mathcal{A}(\mathscr{K}(Y)) = A - B\mathscr{K}(Y).
    \end{split}
\end{equation*}
Then obviously
\begin{equation}\label{equivalent_operator_matrix}
    \vect(\mathcal{L}_{X}(Y)) = \mathscr{A}(X)\vect(Y).
\end{equation}
If $X$ is stable, then $\mathscr{A}(X)$ is invertible, by \eqref{equivalent_operator_matrix} the inverse operator $\mathcal{L}_X^{-1}(\cdot)$ exists on $\mathbb{R}^{n\times n}$.

In Procedure \ref{procedure_policy_itration}, suppose $K_1=\mathscr{K}(P_0)$,
where $P_0\in\mathbb{S}^{n}$ is chosen such that $K_1$ is stabilizing. Such a $P_0$ always exists. For example, since $K^*$ is stabilizing, one can choose $P_0$ close to $P^*$ by continuity. Then from \eqref{RPI_PE} and \eqref{RPI_PI}, the sequence $\{P_i\}_{i=0}^{\infty}$ generated by Procedure \ref{procedure_policy_itration} satisfies 
\begin{equation}\label{Kleinman_one_step}
    P_{i+1} = \mathcal{L}_{\mathcal{A}(\mathscr{K}(P_i))}^{-1}\left(- S - \mathscr{K}(P_i)^TR\mathscr{K}(P_i)\right).
\end{equation}
If $P_i$ is regarded as the state, and the iteration index $i$ is regarded as time, then \eqref{Kleinman_one_step} is a discrete-time dynamical system and $P^*$ is an equilibrium by Theorem \ref{theorem_standard_PI}. The next lemma shows that $P^*$ is actually a locally exponentially stable equilibrium, whose proof is given in Appendix B. 
\begin{lemma}\label{lemma_local_exponentail_stable}
    For any $\sigma<1$, there exists a $\delta_0(\sigma)>0$, such that for any $P_i\in\mathcal{B}_{\delta_0}(P^*)$, $\mathscr{R}(P_i)$ is invertible, $\mathcal{A}(\mathscr{K}(P_i))$ is stable and
    $\Vert P_{i+1}-P^* \Vert_F \leq \sigma\Vert P_i-P^*\Vert_F$.
\end{lemma}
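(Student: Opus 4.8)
The plan is to view \eqref{Kleinman_one_step} as a map $P\mapsto\Phi(P)$ with fixed point $P^*=\Phi(P^*)$, and to show that the error decays \emph{quadratically} near $P^*$; the claimed contraction with an arbitrary rate $\sigma<1$ is then immediate. I would first secure the well-definedness claims by a continuity/compactness argument. Since $R$ is positive definite and $P^*\geq 0$, $\mathscr{R}(P^*)=R+B^TP^*B$ is positive definite, hence invertible, and $\mathcal{A}(\mathscr{K}(P^*))=A-BK^*$ is stable by the standing assumptions. Because $\mathscr{R}(\cdot)$, $\mathscr{K}(\cdot)$ and $\mathcal{A}(\mathscr{K}(\cdot))$ are continuous (indeed smooth) wherever $\mathscr{R}$ is invertible, and the spectral radius depends continuously on the matrix entries, there is a radius $\delta_1>0$ such that on the closed ball $\mathcal{\bar{B}}_{\delta_1}(P^*)$ the matrix $\mathscr{R}(P)$ stays invertible and $\rho(\mathcal{A}(\mathscr{K}(P)))\leq\rho_0$ for some fixed $\rho_0<1$. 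On this ball $\mathscr{A}(\mathcal{A}(\mathscr{K}(P)))$ is invertible, so $\mathcal{L}_{\mathcal{A}(\mathscr{K}(P))}^{-1}$ exists by \eqref{equivalent_operator_matrix}, $\Phi$ is well defined, and the operator norm $\Vert\mathcal{L}_{\mathcal{A}(\mathscr{K}(P))}^{-1}\Vert$ is bounded by a constant $c_1$ uniformly over the compact ball.

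The heart of the proof is an exact error formula obtained by completion of squares. Writing $K=\mathscr{K}(P_i)$, the defining Lyapunov equation for $P_{i+1}$ reads $\mathcal{L}_{A-BK}(P_{i+1})+S+K^TRK=0$. On the other hand, since $P^*$ solves the ARE \eqref{ARE}, a standard completion-of-squares computation using \eqref{ARE} and $K^*=\mathscr{K}(P^*)$ yields the identity
\begin{equation*}
\mathcal{L}_{A-BK}(P^*)+S+K^TRK=(K-K^*)^T\mathscr{R}(P^*)(K-K^*).
\end{equation*}
Subtracting the two relations and applying $\mathcal{L}_{A-BK}^{-1}$ gives the key formula
\begin{equation*}
P_{i+1}-P^*=-\mathcal{L}_{\mathcal{A}(\mathscr{K}(P_i))}^{-1}\!\left((\mathscr{K}(P_i)-K^*)^T\mathscr{R}(P^*)(\mathscr{K}(P_i)-K^*)\right),
\end{equation*}
which exhibits the error $P_{i+1}-P^*$ as a quantity quadratic in the gain error $\mathscr{K}(P_i)-K^*$.

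To convert this into the stated bound, note that $\mathscr{K}$ is continuously differentiable on $\mathcal{\bar{B}}_{\delta_1}(P^*)$ with $\mathscr{K}(P^*)=K^*$, hence Lipschitz there: $\Vert\mathscr{K}(P_i)-K^*\Vert_2\leq c_2\Vert P_i-P^*\Vert_F$. Combining this with the uniform bounds $\Vert\mathcal{L}_{\mathcal{A}(\mathscr{K}(P_i))}^{-1}\Vert\leq c_1$ and $\Vert\mathscr{R}(P^*)\Vert_2\leq c_3$ in the error formula gives $\Vert P_{i+1}-P^*\Vert_F\leq c\,\Vert P_i-P^*\Vert_F^2$ on $\mathcal{\bar{B}}_{\delta_1}(P^*)$, with $c$ a fixed constant (absorbing the constants relating the Frobenius and induced $2$-norms). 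Given $\sigma<1$, setting $\delta_0(\sigma)=\min\{\delta_1,\sigma/c\}$ ensures that for every $P_i\in\mathcal{B}_{\delta_0}(P^*)$ the matrix $\mathscr{R}(P_i)$ is invertible, $\mathcal{A}(\mathscr{K}(P_i))$ is stable, and $\Vert P_{i+1}-P^*\Vert_F\leq c\,\Vert P_i-P^*\Vert_F\cdot\Vert P_i-P^*\Vert_F\leq c\delta_0\Vert P_i-P^*\Vert_F\leq\sigma\Vert P_i-P^*\Vert_F$, as claimed.

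The step I expect to be most delicate is not the algebra but the \emph{uniformity} of the constants over the neighbourhood: one must guarantee that $A-B\mathscr{K}(P_i)$ stays stable with spectral radius bounded away from $1$, so that $\Vert\mathcal{L}_{\mathcal{A}(\mathscr{K}(P_i))}^{-1}\Vert$ admits a single bound $c_1$. This is handled by passing to the compact closed ball $\mathcal{\bar{B}}_{\delta_1}(P^*)$ and invoking continuity of the relevant maps together with the extreme value theorem; equivalently, by \eqref{equivalent_operator_matrix} the norm of $\mathscr{A}(\cdot)^{-1}$ stays bounded as long as the spectrum of $\mathcal{A}(\mathscr{K}(P))$ remains inside the fixed disk of radius $\rho_0<1$.
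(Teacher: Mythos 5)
Your proposal is correct and follows essentially the same route as the paper's proof in Appendix~B: the same completion-of-squares identity $\mathcal{L}_{\mathcal{A}(\mathscr{K}(P_i))}(P^*)+S+\mathscr{K}(P_i)^TR\mathscr{K}(P_i)=(\mathscr{K}(P_i)-K^*)^T\mathscr{R}(P^*)(\mathscr{K}(P_i)-K^*)$, the same quadratic error bound $\Vert P_{i+1}-P^*\Vert_F\leq c\Vert P_i-P^*\Vert_F^2$ via local Lipschitz continuity of $\mathscr{K}(\cdot)$ and uniform bounds on $\mathcal{L}^{-1}$ over a compact ball, and the same final choice of $\delta_0$ with $c\delta_0\leq\sigma$.
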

Lemma \ref{lemma_local_exponentail_stable} is inspired by \cite[Theorem 2]{1099755}, which states that Procedure \ref{procedure_policy_itration} has the rate of convergence
\begin{equation}\label{rate_of_convergence}
    \Vert P_{i+1}-P^* \Vert_F\leq c_0\Vert P_{i}-P^* \Vert_F^2.
\end{equation}
for any $P_i\geq P^*$, and some $c_0>0$. Notice that Lemma \ref{lemma_local_exponentail_stable} does not have the requirement $P_i\geq P^*$.

In Procedure \ref{procedure_robust_policy_iteration}, suppose $\hat{K}_1 = \mathscr{K}(\tilde{P}_0)$ and $\Delta G_0=0$, where $\tilde{P}_0\in\mathbb{S}^{n}$ is chosen such that $\hat{K}_1$ is stabilizing. If $\hat{K}_i$ is stabilizing and $[\hat{G}_{i}]_{uu}$ is invertible for all $i\in\mathbb{Z}_+$, $i>0$ (this is possible under certain conditions, see Appendix C, the sequence $\{\tilde{P}_i\}_{i=0}^{\infty}$ generated by Procedure \ref{procedure_robust_policy_iteration} satisfies
\begin{equation}\label{robust_Kleinman_one_step}
\begin{split}
    \tilde{P}_{i+1} &= \mathcal{L}_{\mathcal{A}(\mathscr{K}(\tilde{P}_i))}^{-1}\left(- S - \mathscr{K}(\tilde{P}_i)^TR\mathscr{K}(\tilde{P}_i)\right)\\
    &+  \mathcal{E}(\tilde{G}_i,\Delta G_i),    
\end{split}
\end{equation}
where
\begin{equation*}
    \begin{split}
        \mathcal{E}(\tilde{G}_i,\Delta G_i) &= \mathcal{L}_{\mathcal{A}(\hat{K}_{i+1})}^{-1}\left(-S-\hat{K}^T_{i+1}R\hat{K}_{i+1}\right) \\
        &- \mathcal{L}_{\mathcal{A}(\mathscr{K}(\tilde{P}_i))}^{-1}\left(- S - \mathscr{K}(\tilde{P}_i)^TR\mathscr{K}(\tilde{P}_i)\right).
    \end{split}
\end{equation*}
Here, the dependence of $\mathcal{E}$ on $\tilde{G}_i$ and $\Delta G_i$ comes from \eqref{eRPI_PI}. Regarding $\{\Delta G_i\}_{i=0}^{\infty}$ as the disturbance input, the next lemma shows that dynamical system \eqref{robust_Kleinman_one_step} is locally input-to-state stable, whose proof can be found in Appendix C.
\begin{lemma}\label{lemma_local_ISS}
For $\sigma$ and its associated $\delta_0$ in Lemma \ref{lemma_local_exponentail_stable}, there exists $\delta_1(\delta_0)>0$, such that if $\Vert\Delta G\Vert_\infty<\delta_1$, $\tilde{P}_0\in\mathcal{B}_{\delta_0}(P^*)$,
\begin{enumerate}[label=(\roman*),leftmargin=0.8cm]
    \item\label{lemma_local_ISS_item_well_defined} $[\hat{G}_{i}]_{uu}$ is invertible, $\hat{K}_i$ is stabilizing, $\forall i\in\mathbb{Z}_+$, $i>0$;
    \item\label{lemma_local_ISS_item_local_ISS} \eqref{robust_Kleinman_one_step} is locally input-to-state stable (see Definition \ref{Definition_local_ISS}):
    $$\Vert \tilde{P}_i - P^*\Vert_F\leq \beta(\Vert\tilde{P}_0-P^*\Vert_F,i) + \gamma(\Vert \Delta G\Vert_\infty),$$
    for all $i\in\mathbb{Z}_+$, where
    $\beta(y,i) = \sigma^iy$, $\gamma(y) = c_3y/(1-\sigma)$, $y\in\mathbb{R}$
    and $c_3(\delta_0)>0$.
    \item\label{lemma_local_ISS_item_K_bounded} $\Vert\hat{K}_i\Vert_F<\kappa_1$ for some $\kappa_1\in\mathbb{R}_+$, $\forall i\in\mathbb{Z}_+$, $i>0$;
    \item\label{lemma_local_ISS_item_converging_input_converging_state} $\lim_{i\rightarrow\infty} \Vert\Delta G_i\Vert_F = 0$ implies $\lim_{i\rightarrow\infty} \Vert \tilde{P}_i-P^* \Vert_F=0$.
\end{enumerate}
\end{lemma}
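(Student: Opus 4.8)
The plan is to read the inexact recursion \eqref{robust_Kleinman_one_step} as the exact policy-iteration map perturbed by a small disturbance, and then to invoke the standard principle that a contraction forced by a bounded term is input-to-state stable. Abbreviate the exact update as $F(P)\triangleq\mathcal{L}_{\mathcal{A}(\mathscr{K}(P))}^{-1}(-S-\mathscr{K}(P)^TR\mathscr{K}(P))$, so \eqref{robust_Kleinman_one_step} becomes $\tilde{P}_{i+1}=F(\tilde{P}_i)+\mathcal{E}(\tilde{G}_i,\Delta G_i)$. By Lemma \ref{lemma_local_exponentail_stable}, on $\mathcal{B}_{\delta_0}(P^*)$ the map $F$ obeys $\Vert F(P)-P^*\Vert_F\le\sigma\Vert P-P^*\Vert_F$; I stress that this contraction holds \emph{without} the ordering $P\ge P^*$, which is exactly what permits its use on the inexact iterates $\tilde{P}_i$, whose ordering relative to $P^*$ is not controlled. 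Everything then reduces to two tasks: a linear-in-$\Vert\Delta G_i\Vert_F$ bound on the forcing term $\mathcal{E}$, and an induction that keeps the iterates inside $\mathcal{B}_{\delta_0}(P^*)$.

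For the forcing bound I would first control the perturbed policy-update. Since $[\hat{G}_i]_{uu}=\mathscr{R}(\tilde{P}_i)+[\Delta G_i]_{uu}$ and $\mathscr{R}(\tilde{P}_i)=R+B^T\tilde{P}_iB$ is invertible and uniformly bounded below on the compact set $\bar{\mathcal{B}}_{\delta_0}(P^*)$ (by Lemma \ref{lemma_local_exponentail_stable} plus compactness), a small enough $\delta_1$ forces $[\hat{G}_i]_{uu}$ to be invertible, and Lipschitz continuity of matrix inversion yields $\Vert\hat{K}_{i+1}-\mathscr{K}(\tilde{P}_i)\Vert_F\le c\Vert\Delta G_i\Vert_F$ with $c$ uniform over the ball (here $\mathscr{K}(\tilde{P}_i)=[\tilde{G}_i]_{uu}^{-1}[\tilde{G}_i]_{ux}$ is the \emph{exact} improvement of $\tilde{P}_i$). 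Because $\mathscr{K}(\tilde{P}_i)=K^*+O(\delta_0)$ lies near the stabilizing $K^*$ and the set of stabilizing gains is open, shrinking $\delta_0,\delta_1$ makes $\hat{K}_{i+1}$ stabilizing, so both Lyapunov solves defining $\mathcal{E}$ exist; this is item \ref{lemma_local_ISS_item_well_defined}. Finally, $K\mapsto\mathcal{L}_{\mathcal{A}(K)}^{-1}(-S-K^TRK)$ is $C^1$ on the open set of stabilizing gains, so restricting to a compact neighbourhood gives a uniform Lipschitz constant and hence $\Vert\mathcal{E}(\tilde{G}_i,\Delta G_i)\Vert_F\le c_3\Vert\Delta G_i\Vert_F$ with $c_3=c_3(\delta_0)$.

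I would then establish items \ref{lemma_local_ISS_item_well_defined} and \ref{lemma_local_ISS_item_local_ISS} jointly by induction on $i$, carrying the invariance $\tilde{P}_i\in\mathcal{B}_{\delta_0}(P^*)$ as the hypothesis. Choosing $\delta_1<(1-\sigma)\delta_0/c_3$, the one-step estimate $\Vert\tilde{P}_{i+1}-P^*\Vert_F\le\sigma\Vert\tilde{P}_i-P^*\Vert_F+c_3\Vert\Delta G_i\Vert_F<\sigma\delta_0+c_3\delta_1<\delta_0$ closes the induction and yields item \ref{lemma_local_ISS_item_well_defined} for all $i$. Unrolling that recursion as a geometric sum gives $\Vert\tilde{P}_i-P^*\Vert_F\le\sigma^i\Vert\tilde{P}_0-P^*\Vert_F+\frac{c_3}{1-\sigma}\Vert\Delta G\Vert_\infty$, which is item \ref{lemma_local_ISS_item_local_ISS} with $\beta(y,i)=\sigma^iy$ and $\gamma(y)=c_3y/(1-\sigma)$, matching Definition \ref{Definition_local_ISS}.

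Item \ref{lemma_local_ISS_item_K_bounded} then follows since $\hat{K}_{i+1}=[\hat{G}_i]_{uu}^{-1}[\hat{G}_i]_{ux}$ is a continuous function of the bounded, uniformly invertible data over the invariant ball, so a bound $\kappa_1$ exists. For item \ref{lemma_local_ISS_item_converging_input_converging_state}, taking $\limsup_i$ in the one-step inequality and using $\Vert\Delta G_i\Vert_F\to0$ gives $\limsup_i\Vert\tilde{P}_i-P^*\Vert_F\le\sigma\limsup_i\Vert\tilde{P}_i-P^*\Vert_F$, forcing the $\limsup$ to vanish. The main obstacle is the circular coupling in the induction: the per-step bound and the well-definedness of $\hat{K}_{i+1}$ are valid only while $\tilde{P}_i$ stays in $\mathcal{B}_{\delta_0}(P^*)$, yet remaining in that ball is precisely what those facts are used to prove. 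Resolving this requires fixing $\delta_1$ (and, if needed, shrinking $\delta_0$) at the outset so that forward invariance and well-definedness propagate together, and extracting the Lipschitz constants \emph{uniformly} over the closed ball by compactness rather than pointwise.
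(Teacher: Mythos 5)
Your proposal is correct and follows essentially the same route as the paper's proof: the same decomposition into the exact contraction of Lemma \ref{lemma_local_exponentail_stable} plus a forcing term bounded linearly by $c_3\Vert\Delta G_i\Vert_F$ (obtained uniformly over the closed ball by compactness), the same induction establishing forward invariance of $\mathcal{B}_{\delta_0}(P^*)$ jointly with well-definedness, and the same geometric unrolling for item \ref{lemma_local_ISS_item_local_ISS}. The only cosmetic differences are that the paper bounds $\mathcal{E}$ via an explicit perturbation identity rather than a $C^1$/Lipschitz argument, and proves item \ref{lemma_local_ISS_item_converging_input_converging_state} by an $\epsilon/2$ argument on the $\mathcal{KL}$ bound rather than your (equally valid) $\limsup$ fixed-point argument.
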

To prove Lemma \ref{lemma_local_ISS}, we firstly prove that with the given conditions, by continuity $[\hat{G}_{i}]_{uu}$ is invertible, $\hat{K}_i$ is stabilizing and
$\Vert\mathcal{E}(\tilde{G}_i,\Delta G_i)\Vert_F\leq c_3\Vert\Delta G_i\Vert_F$.
Then by Lemma \ref{lemma_local_exponentail_stable} and \eqref{robust_Kleinman_one_step}, if $\tilde{P}_i\in\mathcal{B}_{\delta_0}(P^*)$, $\delta_1$ can be chosen small enough so that
\begin{align}
    \Vert \tilde{P}_{i+1}-P^*\Vert_F&\leq \sigma\Vert \tilde{P}_{i}-P^*\Vert_F + c_3\Vert \Delta G\Vert_\infty  \label{proof_illustrate_exp_stab_robust}  \\
    &<\sigma\delta_0 + c_3\delta_1<\delta_0 \nonumber.
\end{align}
By mathematical induction, unrolling \eqref{proof_illustrate_exp_stab_robust} completes the proof. In the unrolling process, the coefficient $0<\sigma<1$ in the exponential stability of Lemma \ref{lemma_local_exponentail_stable} prevents the accumulated effects of disturbance $\Delta G_i$ from driving $\Vert \tilde{P}_{i+1}-P^*\Vert_F$ to the infinity.

Intuitively, Lemma \ref{lemma_local_ISS} implies that in Procedure \ref{procedure_robust_policy_iteration}, if $\tilde{P}_0$ is near $P^*$ (thus $\hat{K}_1$ is near $K^*$), and the disturbance input $\Delta G$ is bounded and not too large, then the cost of the generated control policy $\hat{K}_i$ is also bounded, and will ultimately be no larger than a constant proportional to the $l^\infty$-norm of the disturbance. The smaller the disturbance is, the better the ultimately generated policy is. In other words, the algorithm described in Procedure \ref{procedure_robust_policy_iteration} is not sensitive to small disturbances when the initial condition is in a neighbourhood of the optimal solution.

The requirement that the initial condition $\tilde{P}_0$ needs to be in a neighbourhood of $P^*$ in Lemma \ref{lemma_local_ISS} can be removed, as stated in the following theorem whose proof is given in the Appendix D. 
\begin{theorem}\label{theorem_global}
For any given stabilizing control gain $\hat{K}_1$ and any $\epsilon>0$, if $S>0$, there exist $\delta_2(\epsilon,\hat{K}_1)>0$, $\Pi(\delta_2)>0$, and $\kappa(\delta_2)>0$, such that for all $\Delta G$ satisfying $\Vert \Delta G \Vert_\infty < \delta_2$, $[\hat{G}_{i}]_{uu}$ is invertible, $\hat{K}_i$ is stabilizing, $\Vert \tilde{P}_i\Vert_F<\Pi$, $\Vert \hat{K}_i\Vert_F<\kappa$, $\forall i\in\mathbb{Z}_+$, $i>0$ and
$$\limsup_{i\rightarrow\infty} \Vert \tilde{P}_i-P^* \Vert_F<\epsilon.$$
If in addition $\lim_{i\rightarrow\infty} \Vert\Delta G_i\Vert_F = 0$, then $\lim_{i\rightarrow\infty} \Vert \tilde{P}_i-P^* \Vert_F=0$.
\end{theorem}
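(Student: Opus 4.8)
The plan is to decompose the trajectory of \eqref{robust_Kleinman_one_step} into a finite \emph{reaching phase} that drives the iterate into the basin $\mathcal{B}_{\delta_0}(P^*)$ of Lemma \ref{lemma_local_exponentail_stable}, followed by a \emph{local phase} in which Lemma \ref{lemma_local_ISS} applies directly. The reaching phase is built by comparing the inexact iteration started from the given $\hat{K}_1$ with the \emph{exact} iteration started from the same $\hat{K}_1$. Because $(A,B)$ is controllable and $(A,S^{1/2})$ is observable, item \ref{PI_converge} of Theorem \ref{theorem_standard_PI} guarantees that the disturbance-free sequence $\{P_i\}$ generated from $\hat{K}_1$ satisfies $P_i\to P^*$; hence there is a finite $N=N(\hat{K}_1)$ with $P_N\in\mathcal{B}_{\delta_0/2}(P^*)$. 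The goal is then to pick $\delta_2$ small enough that, whenever $\Vert\Delta G\Vert_\infty<\delta_2$, the perturbed iterate remains well defined up to index $N$ and lands with $\tilde{P}_N\in\mathcal{B}_{\delta_0}(P^*)$.

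First I would establish uniform well-posedness of the first $N$ steps. Along the exact trajectory each improved gain is stabilizing (item \ref{Hurwitz_matrix} of Theorem \ref{theorem_standard_PI}) and each $[G_i]_{uu}=R+B^TP_iB\ge R>0$ is invertible with a margin; since stability and invertibility are open conditions and the one-step map in \eqref{robust_Kleinman_one_step} is continuous in $(\tilde{P}_i,\Delta G_i)$ on the region where $\mathscr{R}(\tilde{P}_i)$ is invertible and $\mathcal{A}(\hat{K}_{i+1})$ is stable, a finite-horizon induction shows that for $\Vert\Delta G\Vert_\infty$ small enough the perturbed iterates $\tilde{P}_0,\dots,\tilde{P}_N$ stay in fixed compact neighbourhoods of $P_0,\dots,P_N$, keep $[\hat{G}_i]_{uu}$ invertible and $\hat{K}_i$ stabilizing, and satisfy $\Vert\tilde{P}_i-P_i\Vert_F\to 0$ uniformly on $\{0,\dots,N\}$ as $\Vert\Delta G\Vert_\infty\to 0$. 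The hypothesis $S>0$ enters precisely here: the policy-evaluation identity $\mathcal{H}(\tilde{G}_i,\hat{K}_i)=0$ rearranges to $\mathcal{A}(\hat{K}_i)^T\tilde{P}_i\mathcal{A}(\hat{K}_i)-\tilde{P}_i=-(S+\hat{K}_i^TR\hat{K}_i)\le -S<0$, so each $\tilde{P}_i$ certifies stability of $\mathcal{A}(\hat{K}_i)$ with a \emph{strictly positive, quantifiable} margin, which is what converts the merely open stabilizing condition into a uniform one over the finite window. Shrinking $\delta_2$ if necessary then yields $\Vert\tilde{P}_N-P_N\Vert_F<\delta_0/2$, hence $\tilde{P}_N\in\mathcal{B}_{\delta_0}(P^*)$.

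Next I would invoke Lemma \ref{lemma_local_ISS} with index $N$ playing the role of the initial time. Since $\tilde{P}_N\in\mathcal{B}_{\delta_0}(P^*)$ and the tail disturbance still satisfies $\sup_{i\ge N}\Vert\Delta G_i\Vert_F\le\Vert\Delta G\Vert_\infty<\delta_2\le\delta_1$, items \ref{lemma_local_ISS_item_well_defined}, \ref{lemma_local_ISS_item_local_ISS} and \ref{lemma_local_ISS_item_K_bounded} give, for all $j\ge 0$,
$$\Vert\tilde{P}_{N+j}-P^*\Vert_F\le\sigma^{\,j}\Vert\tilde{P}_N-P^*\Vert_F+\frac{c_3}{1-\sigma}\Vert\Delta G\Vert_\infty,\qquad \Vert\hat{K}_{N+j}\Vert_F<\kappa_1.$$
Taking $\limsup_{j\to\infty}$ produces $\limsup_{i\to\infty}\Vert\tilde{P}_i-P^*\Vert_F\le c_3\Vert\Delta G\Vert_\infty/(1-\sigma)$, so a final reduction of $\delta_2$ below $(1-\sigma)\epsilon/c_3$ forces this $\limsup$ to be $<\epsilon$. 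Boundedness over all $i$ follows by letting $\Pi$ and $\kappa$ be the maxima of the finite-window bounds from the reaching phase and the uniform bounds $\Vert P^*\Vert_F+\delta_0+c_3\delta_1/(1-\sigma)$ and $\kappa_1$ from the local phase. Finally, if $\Vert\Delta G_i\Vert_F\to 0$, then the tail input vanishes and item \ref{lemma_local_ISS_item_converging_input_converging_state} of Lemma \ref{lemma_local_ISS}, applied from index $N$, yields $\Vert\tilde{P}_i-P^*\Vert_F\to 0$.

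The step I expect to be the main obstacle is the uniform well-posedness of the reaching phase: one must propagate the ``stabilizing and invertible'' property through the composition of $N$ nonlinear maps while the disturbances act at every step, and turn the open conditions (stability of $\mathcal{A}(\hat{K}_{i+1})$, invertibility of $\mathscr{R}(\tilde{P}_i)$) into \emph{quantitative} margins that survive the $N$-fold induction. This is exactly where the strict positivity $S>0$ and a compactness argument on the finite window are indispensable; once the iterate is safely inside $\mathcal{B}_{\delta_0}(P^*)$, the remainder reduces to a direct application of Lemma \ref{lemma_local_ISS}.
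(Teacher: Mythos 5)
Your proof is correct, and the local phase (restart Lemma \ref{lemma_local_ISS} at the first index for which the iterate has entered $\mathcal{B}_{\delta_0}(P^*)$, then shrink $\delta_2$ below $\min(\gamma^{-1}(\epsilon),\delta_1)$) coincides with the paper's reduction to Lemma \ref{lemma_converge_to_neighbourhood}. Where you genuinely diverge is the reaching phase. The paper does \emph{not} compare against the exact trajectory launched from $\hat{K}_1$; instead it (a) derives quantitative perturbation thresholds $a_i$ under which stability and invertibility propagate (Lemma \ref{lemma_stability}), (b) proves a uniform bound $\tilde{P}_i\leq 6\tilde{P}_1$ over a window of \emph{a priori unspecified} length via the summable weights $(1+i^2)^{-1}$ and a convergent-product argument (Lemma \ref{lemma_boundedness}), (c) uses compactness of the resulting reachable set $\mathcal{M}$ to extract a single $k_0$ such that the exact Newton iteration started from \emph{any} point of $\mathcal{M}$ lands in $\mathcal{B}_{\delta_0/2}(P^*)$ within $k_0$ steps, and (d) runs a Gronwall comparison only over those last $k_0$ steps. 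This roundabout structure is forced on the paper because it restarts the exact iteration from a perturbed iterate $\tilde{P}_{\bar i-k_0}$, whose location is not known in advance. Your idea of fixing the horizon $N$ from the disturbance-free trajectory (Theorem \ref{theorem_standard_PI}\ref{PI_converge}) and then invoking finite-horizon continuous dependence of the one-step map on $(\tilde{P}_i,\Delta G_i)$ at the points $(P_i,0)$ sidesteps all of (a)--(d): the backward induction on tolerances terminates because $N$ is finite, and openness of the stability and invertibility conditions at the nominal trajectory is all you need. What the paper's route buys in exchange is explicit, checkable smallness conditions on $\Vert\Delta G_i\Vert_F$ (the bounds $a_i$, $b_{\bar i}$) rather than a purely existential $\delta_2$. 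One small correction: your claim that $S>0$ is ``indispensable'' for the reaching phase is misplaced -- in your argument the margins over the finite window come for free from continuity at finitely many nominal points, and $S>0$ plays no essential role; it is the paper's quantitative Lemmas \ref{lemma_stability} and \ref{lemma_boundedness} that actually consume the hypothesis $S>I_n$ (after rescaling). This is not a gap, merely a misattribution, and it suggests your argument proves the statement under the weaker standing assumption $S\geq 0$ with $(A,S^{1/2})$ observable.
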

Here are the essential elements of the proof for Theorem \ref{theorem_global}: It is firstly proved that given any stabilizing control gain $\hat{K}_1$, there exist $\bar{i}\in\mathbb{Z}_+$, $\bar{i}<+\infty$, and $b_{\bar{i}}>0$, such that if $\Vert\Delta G_i\Vert_F< b_{\bar{i}}$ for $i = 1,2,\cdots,\bar{i}$, then (1) $[\hat{G}_{i}]_{uu}$ is invertible, $\hat{K}_i$ is stabilizing and bounded, $\tilde{P}_i$ is bounded, $i = 1,2,\cdots,\bar{i}$, (2) $\tilde{P}_{\bar{i}}$ enters the neighbourhood of $P^*$, i.e., $\mathcal{B}_{\delta_0}(P^*)$ defined in Lemma \ref{lemma_local_ISS}. Secondly, an application of Lemma \ref{lemma_local_ISS} completes the proof.

In Theorem \ref{theorem_global}, $\hat{K}_1$ can be any stabilizing control gain, which is different from that of Lemma \ref{lemma_local_ISS}. When there is no disturbance, Theorem \ref{theorem_global} implies the convergence result of Procedure \ref{procedure_policy_itration} in \cite[Theorem 1]{1099755} (i.e. Theorem \ref{theorem_standard_PI} in this paper).

\section{Optimistic Least-Squares Policy Iteration}\label{section_datadriven_PI}
For system \eqref{LTI_sys_stochastic}, due to the presence of stochastic noise $w_k$, the cost function \eqref{cost_deterinistic} will not be finite. Thus alternatively the objective is to find a control law in the form of $u = -Kx$ directly from the input/state data, minimizing the cost function
\begin{equation}\label{cost_average}
    J_{Avg}(u) = \lim_{N\rightarrow\infty}\frac{1}{N}\underset{\substack{w_k\\k=0,1,\cdots}}{\mathbb{E}}\left\{\sum_{k=0}^{N-1}c(x_k,u_k)\right\},
\end{equation}
where $S$ and $R$ in $c(x_k,u_k)$ are positive definite. It is well-known \cite[Section 4.4]{BertsekasOptimalControl2} that this problem shares the same optimal solutions with the standard LQR for system \eqref{LTI_sys} and cost function \eqref{cost_deterinistic}. Specifically, the optimal control gain is given by \eqref{optimal_gain}, and the optimal cost is $J^*_{Avg} = \trace(C^TP^*C)$, with $P^*$ the unique positive definite solution of \eqref{ARE}. For any stabilizing gain $K$, the cost it induces is $J_{Avg}(-Kx) = \trace(C^TP_KC)$, with $K$ and $P_K$ satisfying \eqref{algebraic_Lyapunov_equation} (or equivalently \eqref{RPI_PE}). Note that the assumption that $w_k\sim\mathcal{N}(0,I_q)$ in \eqref{LTI_sys_stochastic} is not a restriction, since any random variable $X_1\sim \mathcal{N}(0,\Sigma)$ with $\Sigma\in\mathbb{S}^q$ positive semidefinite, can be represented by $X_1=DX_2$, where $\Sigma=D^TD$, $D\in\mathbb{R}^{q\times q}$, and $X_2\sim \mathcal{N}(0,I_q)$. Then $D$ is absorbed into $C$ in \eqref{LTI_sys_stochastic}.

The optimistic least-squares policy iteration (O-LSPI) is based on the following observation: for a stabilizing gain $K$, its associated $P_K$ is the stable equilibrium of linear dynamical system
\begin{equation}\label{policy_evaluation_model_based}
    P_{K,j+1} = \mathcal{H}(Q(P_{K,j}),K),\quad P_{K,0}\in\mathbb{S}^{n},
\end{equation}
where
\begin{equation}\label{Definition_Q}
\begin{split}
    Q(P_{K,j}) &= \left[
                \begin{array}{c|c}
                [Q(P_{K,j})]_{xx} & [Q(P_{K,j})]^T_{ux} \\
                \hline
                [Q(P_{K,j})]_{ux} & [Q(P_{K,j})]_{uu}
        \end{array}
        \right] \\
        &\triangleq \left[\begin{array}{cc}
                S + A^TP_{K,j}A  & A^TP_{K,j}B \\
                B^TP_{K,j}A & R + B^TP_{K,j}B
        \end{array}\right].            
\end{split}
\end{equation}
This fact can be easily verified by rewriting and vectorizing \eqref{policy_evaluation_model_based} into its equivalent form
\begin{equation}\label{policy_evaluation_model_based_vector}
\begin{split}
    p_{K,j+1} &= \left((A-BK)^T\otimes(A-BK)^T\right)p_{K,j} \\
    &+ \vect(S+K^TRK),\quad p_{K,0}\in\mathbb{R}^{n^2},    
\end{split}
\end{equation}
where $p_{K,j} = \vect(P_{K,j})$. Since $(A-BK)$ is stable, $(A-BK)^T\otimes(A-BK)^T$ is also stable. Thus \eqref{policy_evaluation_model_based_vector} admits a unique stable equilibrium. So does \eqref{policy_evaluation_model_based} and the unique solution must be $P_K$ because
\begin{equation}\label{relationship_H_G}
\begin{split}
    Q(P_{K,j}) &= G(P_{K,j}) + \blkdiag(P_{K,j},0),\\
\mathcal{H}(G(P_{K,j}),K) &= \mathcal{H}(Q(P_{K,j}),K) - P_{K,j}.    
\end{split}
\end{equation}
This implies that instead of solving \eqref{RPI_PE}, we may utilize iteration \eqref{policy_evaluation_model_based} to achieve policy evaluation. It is not hard to recognize that \eqref{policy_evaluation_model_based} is actually the LQR version of the optimistic policy iteration in \cite{tsitsiklis2002convergence,bertsekas2011approximate} for problems with discrete state and action spaces (thus the name ``optimistic'' in O-LSPI). Suppose a behavior policy (the policy used to generate data is called the behavior policy, see \cite{sutton2018reinforcement}) $u_k = -\hat{K}_1x_k + v_k$ is applied to the system to collect data, where $\hat{K}_1$ is stabilizing and $v_k$ is drawn i.i.d. from Gaussian distribution $\mathcal{N}(0,\sigma^2_u I_m)$, $\sigma_u\in\mathbb{R}_+$. Then the state-control pair $[x^T,u^T]^T$ admits a unique invariant distribution $\pi$. We make the following assumption. 
\begin{assumption}\label{assumption_PE}
$\mathbb{E}_{\pi}\left[\tilde{z}\tilde{z}^T\right]$ is invertible, where $$z = [x^T,u^T,1]^T.$$
\end{assumption}
For any $P\in\mathbb{S}^{n}$, we have
\begin{equation*}
    \begin{split}
        \mathbb{E}\left[z^T_k F(P) z_k - x^T_{k+1}Px_{k+1}\vert x_k,u_k\right] = c(x_k,u_k)
    \end{split}
\end{equation*}
where $F(P) = \blkdiag(Q(P),\trace(C^TPC))$.
Vectorizing and multiplying the above equation by $\tilde{z}_k$ yields
\begin{equation*}
    \begin{split}
        &\mathbb{E}\left[\tilde{z}_k\tilde{x}^T_{k+1}\vert x_k,u_k\right]\svec(P) \\
        &= \mathbb{E}\left[\tilde{z}_k\tilde{z}^T_k\vert x_k,u_k\right]\svec(F(P)) 
        - \tilde{z}_kc(x_k,u_k).
    \end{split}
\end{equation*}
Taking expectation with respect to the invariant distribution $\pi$, by Assumption \ref{assumption_PE} we obtain
\begin{equation}\label{exact_param_estimation}
    \svec(F(P)) = \varphi_1^{-1}\left(\varphi_2\svec(P) + \varphi_3\right),
\end{equation}
where $\varphi_1 = \mathbb{E}_{\pi}\left[\tilde{z}_k\tilde{z}^T_k\right]$, $\varphi_2 = \mathbb{E}_{\pi}\left[\tilde{z}_k\tilde{x}^T_{k+1}\right]$, and $\varphi_3 = \mathbb{E}_{\pi}\left[\tilde{z}_kc(x_k,u_k)\right]$.
For known $P$, $F(P)$ can be estimated using least squares from the collected data
$$\svec(\hat{F}(P)) = \Phi_M^\dagger\Psi_M\svec(P) + \Phi_M^\dagger\Xi_M,$$
where $M\in\mathbb{Z}_+$, $M>0$ and
\begin{equation*}
    \begin{split}
        \Phi_M &= \frac{1}{M}\sum_{k=0}^{M-1}\tilde{z}_k\tilde{z}^T_k,\ \Psi_M = \frac{1}{M}\sum_{k=0}^{M-1}\tilde{z}_k\tilde{x}^T_{k+1}, \\
        \Xi_M &= \frac{1}{M}\sum_{k=0}^{M-1}\tilde{z}_kc(x_k,u_k).
    \end{split}
\end{equation*}
In this way, \eqref{policy_evaluation_model_based} can be solved approximately and directly from the data by noticing that $Q(P)=\mathcal{H}(F(P),0)$. The O-LSPI is presented in Algorithm \ref{algorithm_off_policy_stochastic}. Note that the same data matrices $\Phi_M$, $\Psi_M$ and $\Xi_M$ are reused for all iterations, thus O-LSPI is off-policy. The convergence of O-LSPI is proved in the following theorem.
\begin{theorem}\label{theorem_stochastic_algorithm_convergence}
In Algorithm \ref{algorithm_off_policy_stochastic}, under Assumption \ref{assumption_PE}, for any initial stabilizing control gain $\hat{K}_1$ and any $\epsilon>0$, there exist $T_0\in\mathbb{Z}_+$ and $M_0\in\mathbb{Z}_+$, such that for any $T\geq T_0$ and $M\geq M_0$, almost surely,
$$\limsup_{N\rightarrow\infty}\Vert \tilde{P}_N-P^*\Vert_F<\epsilon$$
and $\hat{K}_i$ is stabilizing for all $i=1,\cdots,N$, where $\tilde{P}_N$ is the unique solution of \eqref{algebraic_Lyapunov_equation} for $\hat{K}_N$.
\end{theorem}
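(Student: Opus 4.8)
The plan is to exhibit O-LSPI as a realization of the inexact policy iteration of Procedure~\ref{procedure_robust_policy_iteration} and then invoke Theorem~\ref{theorem_global}. Concretely, I would identify the disturbance as $\Delta G_i = \hat{G}_i - G(P_{\hat{K}_i})$, the gap between the matrix $\hat{G}_i$ produced by the $T$-step least-squares inner loop and the exact quantity that true policy evaluation would return for the current gain $\hat{K}_i$. Since $S>0$, Theorem~\ref{theorem_global} supplies, for the given $\hat{K}_1$ and $\epsilon$, a threshold $\delta_2>0$ together with bounds $\Pi,\kappa$ so that whenever $\Vert\Delta G\Vert_\infty<\delta_2$ every $\hat{K}_i$ is stabilizing, $\Vert\tilde{P}_i\Vert_F<\Pi$, $\Vert\hat{K}_i\Vert_F<\kappa$, and $\limsup_i\Vert\tilde{P}_i-P^*\Vert_F<\epsilon$. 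The whole theorem therefore reduces to forcing $\Vert\Delta G_i\Vert_F<\delta_2$ for all $i$, almost surely, by taking $T$ and $M$ large.

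The key step is to split $\Delta G_i$ into a truncation part (finite inner iterations $T$) and a statistical part (finite data $M$), and to bound both \emph{uniformly} over the bounded region $\{\Vert\tilde{P}\Vert_F<\Pi,\ \Vert\hat{K}\Vert_F<\kappa\}$. First I would turn the cost bound into a uniform contraction rate: from $\tilde{P}_i\le\Pi I$ and $S\ge sI>0$, the standard fact that $M^TPM-P+Q=0$ with $Q\ge qI$ and $0<P\le\Pi I$ forces $\rho(M)^2\le 1-q/\Pi$ yields $\rho(A-B\hat{K}_i)\le\bar\rho<1$ uniformly, so the exact inner iteration \eqref{policy_evaluation_model_based_vector} contracts at rate $\bar\rho^2$. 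Next, by ergodicity of the behaviour-policy Markov chain $\{z_k\}$ and the strong law of large numbers, $\Phi_M\to\varphi_1$, $\Psi_M\to\varphi_2$, $\Xi_M\to\varphi_3$ almost surely; Assumption~\ref{assumption_PE} makes $\varphi_1$ invertible, hence $\Phi_M^\dagger\to\varphi_1^{-1}$ a.s. Consequently the estimated inner-loop map converges to the true one, so for large $M$ its spectral radius is still below $1$ and its fixed point lies within a distance of the true $P_{\hat{K}_i}$ that vanishes almost surely as $M\to\infty$, uniformly on the region; truncating at $T$ steps adds a geometric error of order $\bar\rho^{2T}$. Reading off $\hat{G}_i$ through \eqref{relationship_H_G} and combining gives $\Vert\Delta G_i\Vert_F\le c_{\mathrm{stat}}(M)+c_{\mathrm{trunc}}(T)$ with $c_{\mathrm{trunc}}\to0$ and $c_{\mathrm{stat}}\to0$ almost surely, both uniform over the region.

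To conclude I would choose $T_0$ with $c_{\mathrm{trunc}}<\delta_2/2$ and, using the almost sure convergence of the data matrices, $M_0$ with $c_{\mathrm{stat}}<\delta_2/2$, so that $\Vert\Delta G_i\Vert_\infty<\delta_2$. Because the uniform error bounds only hold on the region $\{\Vert\tilde{P}\Vert_F<\Pi,\ \Vert\hat{K}\Vert_F<\kappa\}$, whereas that region is itself guaranteed only while $\Vert\Delta G\Vert_\infty<\delta_2$, the two facts must be closed by induction on $i$: if the trajectory stays in the region through step $i$ then the error there is below $\delta_2$, and Theorem~\ref{theorem_global} keeps step $i+1$ in the region. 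This induction gives $\Vert\Delta G_i\Vert_\infty<\delta_2$ for every $i$, and a final application of Theorem~\ref{theorem_global} delivers $\limsup_N\Vert\tilde{P}_N-P^*\Vert_F<\epsilon$ with all $\hat{K}_i$ stabilizing, almost surely.

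The hardest part will be the uniformity together with this circular dependence between ``errors small'' and ``iterates bounded and stabilizing''. In particular the uniform rate $\bar\rho$ hinges on converting $\tilde{P}_i\le\Pi I$ into a spectral-radius bound, which genuinely uses $S>0$. Moreover, since the off-policy scheme reuses one data set for all iterations, the errors $\Delta G_i$ are statistically dependent, so the almost sure control cannot come from an independent estimate per iteration but must rest on the single probability-one event on which $\Phi_M,\Psi_M,\Xi_M$ all converge; some care with the quantifier order (the $M$ needed for $c_{\mathrm{stat}}<\delta_2/2$ is a priori sample-path dependent) is then reconciled by this almost sure convergence and the $\epsilon$-slack in the $\limsup$ conclusion.
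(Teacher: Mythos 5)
Your proposal is correct and follows essentially the same route as the paper's Appendix E: reduce the theorem to forcing $\Vert\Delta G_i\Vert_F<\delta_2$ so that Theorem~\ref{theorem_global} applies, decompose the policy-evaluation error into a statistical part (controlled by the a.s.\ convergence of $\Phi_M,\Psi_M,\Xi_M$ under Assumption~\ref{assumption_PE}) and a truncation part (controlled by a uniform geometric rate of the inner iteration), and close the circularity between ``errors small'' and ``iterates bounded and stabilizing'' by induction on $i$ over the single probability-one convergence event. The only substantive difference is one sub-step -- you obtain the uniform contraction rate quantitatively from the Lyapunov equation using $S>0$ and $\tilde P_i\le \Pi I$, while the paper gets it from compactness of the closure of the reachable gain set $\mathcal{F}$ together with Lemma~\ref{uniform_exp_stab_over_compact_set} -- and the quantifier subtlety you flag (fixed deterministic $M_0$ versus sample-path-dependent convergence) is present in the paper's own argument as well.
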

The proof of Theorem \ref{theorem_stochastic_algorithm_convergence} can be found in Appendix E. Let $J_{Avg}(-\hat{K}_ix) = \trace(C^T\tilde{P}_{i}C)$ denote the true cost induced by $\hat{K}_i$. By Theorem \ref{theorem_global}, the task is to prove that there exist $T_0\in\mathbb{Z}_+$ and $M_0\in\mathbb{Z}_+$, such that for any $T\geq T_0$ and $M\geq M_0$, almost surely, $\Vert\Delta G\Vert_\infty< \delta_2$. For Algorithm \ref{algorithm_off_policy_stochastic}, 
\begin{equation*}
    \hat{G}_i = \hat{Q}_{i,T} - \blkdiag(\hat{P}_{i,T},0).
\end{equation*}
Using \eqref{relationship_H_G}, we have
\begin{align}
    \Vert \Delta G_i\Vert_F
    &\leq \Vert \hat{Q}_{i,T} - Q(\hat{P}_{i,T})\Vert_F + \Vert Q(\hat{P}_{i,T}) - Q(\tilde{P}_i)\Vert_F \nonumber\\
    &+ \Vert \hat{P}_{i,T}-\tilde{P}_i\Vert_F. \label{error_decomposition}
\end{align}
Since $\hat{K}_1$ is stabilizing, by the Birkhoff ergodic theorem \cite[Theorem 16.14]{korlov2007theory}, almost surely
\begin{equation}\label{ergodic_converge}
\begin{split}
    \lim_{M\rightarrow\infty} \Phi_M &= \varphi_1, \lim_{M\rightarrow\infty} \Psi_M = \varphi_2,  \\
    \lim_{M\rightarrow\infty} \Xi_M &= \varphi_3.
\end{split}
\end{equation}
Using \eqref{ergodic_converge}, Assumption \ref{assumption_PE}, \eqref{policy_evaluation_model_based} and \eqref{exact_param_estimation}, we are able to show that there exist $T_0$ and $M_0$, independent of iteration index $i$, such that for any $T\geq T_0$ and $M\geq M_0$, almost surely every term in \eqref{error_decomposition} is less than $\delta_2/3$. Then Theorem \ref{theorem_global} completes the proof.
\begin{algorithm}
\KwIn{Initial stabilizing control gain $\hat{K}_1$, Number of policy iterations $N$, Number of iterations for policy evaluation $T$, Number of rollout $M$, Exploration variance $\sigma^2_u$.}
Collect data with input $u_k=-\hat{K}_1x_k+v_k$, $v_k\sim \mathcal{N}(0,\sigma^2_u I_m)$, to construct $\Phi_M$, $\Psi_M$ and $\Xi_M$\;
\For{$i=1,\cdots,N-1$}{
$\hat{P}_{i,0}\leftarrow 0$\;
\For{$j=0,\cdots,T-1$}{
$\svec(\hat{F}_{i,j})\leftarrow \Phi_M^\dagger\Psi_M\svec(\hat{P}_{i,j}) + \Phi_M^\dagger\Xi_M$\;\label{algorithm_policy_evaluation_begin}
$\hat{Q}_{i,j}\leftarrow \mathcal{H}(\hat{F}_{i,j},0)$\;
$\hat{P}_{i,j+1} \leftarrow \mathcal{H}(\hat{Q}_{i,j},\hat{K}_i)$\;\label{algorithm_policy_teration_end}
}
$\svec(\hat{F}_{i,T})\leftarrow \Phi_M^\dagger\Psi_M\svec(\hat{P}_{i,T}) + \Phi_M^\dagger\Xi_M$\;\label{algorithm_estimate_final_Q_1}
$\hat{Q}_{i,T}\leftarrow \mathcal{H}(\hat{F}_{i,T},0)$\;\label{algorithm_estimate_final_Q_2}
$\hat{K}_{i+1}\leftarrow [\hat{Q}_{i,T}]_{uu}^{-1}[\hat{Q}_{i,T}]_{ux}$\;
}
\KwRet{$\hat{K}_N$.}
\caption{O-LSPI\label{algorithm_off_policy_stochastic}}
\end{algorithm}

\section{Experiments}\label{section_simulation}
We apply O-LSPI to the LQR problem studied in \cite{krauth2019finite} with
\begin{equation*}
\begin{split}
A &= \left[\begin{array}{ccc}
    0.95 & 0.01 & 0 \\
    0.01 & 0.95 & 0.01 \\
    0 & 0.01 & 0.95
\end{array}\right],\quad B=\left[\begin{array}{cc}
    1 & 0.1 \\
    0 & 0.1 \\
    0 & 0.1
\end{array}\right],  \\
C &= S =I_3,\quad R = I_2.
\end{split}
\end{equation*}
Here $A$ is stable, so we just choose the initial stabilizing control gain to be $\hat{K}_1 = 0_{2\times 3}$. The exploration variance is set to $\sigma_u^2=1$. All the experiments are conducted using MATLAB\footnote{https://www.mathworks.com/} 2017b, on the New York University High Performance Computing Cluster \textit{Prince} with $4$ CPUs and 16GB Memory. Algorithm \ref{algorithm_off_policy_stochastic} is implemented with increasing values of parameters $N$, $T$ and $M$, until the performance of the resulting control gain (almost) does not improve. This yields $N=5$, $T=45$ and $M=10^6$. To investigate the performance of the algorithm with different values of $M$ and $T$, we conducted two sets of experiments: (a) Fix $N=5$ and $T=45$, and implement Algorithm \ref{algorithm_off_policy_stochastic} with increasing values of $M$ from $200$ to $10^6$; (b) Fix $N=5$ and $M=10^6$, and implement Algorithm \ref{algorithm_off_policy_stochastic} with increasing values of $T$ from $2$ to $45$. To evaluate the stability, we run Algorithm \ref{algorithm_off_policy_stochastic} for 100 times per set of parameters, and compute the fraction of times it produces stable policies in all phases (left column in Figure \ref{exp_results}). To evaluate the optimality, the relative error of the cost function $\trace(C^T(\tilde{P}_N-P^*)C)/\trace(C^TP^*C)$ is calculated. The relative errors of 100 stable implementations of Algorithm 1 are collected (i.e., implementation that yields stabilizing control gains in all phases), based on which the sample average (middle column in Figure \ref{exp_results}) and sample variance (right column in Figure \ref{exp_results}) of the relative error are plotted.

In Figure \ref{exp_results}, as the number of rollout $M$ increases, the fraction of stability becomes one, and both the sample average and sample variance of relative error converge to zero. The fraction of stability is not sensitive to the number of iteration for policy evaluation $T$. But as $T$ increases, the sample average and sample variance of relative error improve and converge to zeros. These observations are consistent with our Theorem \ref{theorem_stochastic_algorithm_convergence}, thus are also consistent with our robustness analysis for policy iteration, since Theorem \ref{theorem_stochastic_algorithm_convergence} is based on Theorem \ref{theorem_global}.

For comparison, the off-policy least-squares policy iteration algorithm LSPIv1 in \cite{krauth2019finite} is also implemented, using the same setting with the first set of experiments of various $M$ (upper row in Figure \ref{exp_results}). The O-LSPI and LSPIv1 have similar performance for $M\geq 10^4$, while the performance of LSPIv1 is slightly better than that of O-LSPI for $M<10^4$. This may be explained by the fact that the LSPIv1 in \cite{krauth2019finite} assume knowledge of the matrix $C$ in \eqref{LTI_sys_stochastic}, which is not required in O-LSPI.

\begin{figure*}[!htb]
    \centering
    \includegraphics[width=0.7\linewidth]{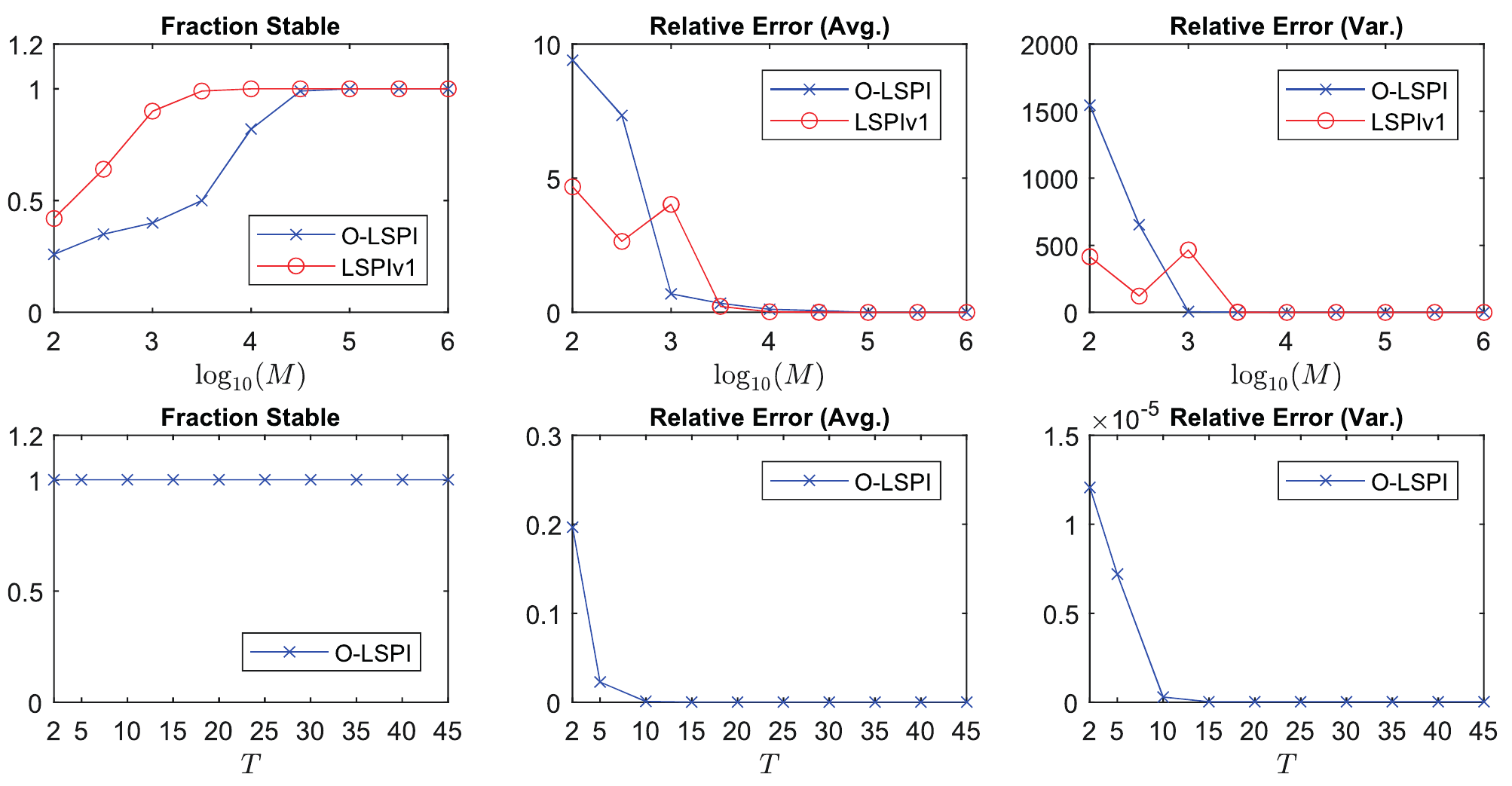}
    \caption{Experimental evaluation on the dynamics of \cite{krauth2019finite}.}\label{exp_results}
\end{figure*}

Finally, the performance of O-LSPI and LSPIv1 with various choices of exploration variance $\sigma^2_u$ has been investigated on the same example (see Appendix F). The performance of both O-LSPI and LSPIv1 is best when the exploration variances are large ($\sigma^2_u\geq 10$). The performance of both O-LSPI and LSPIv1 deteriorates when the exploration noise variances are medium and small ($\sigma^2_u< 10$). And O-LSPI performs better than LSPIv1 when the exploration noise variances are small ($\sigma^2_u \leq 10^{-5}$).

\section{Related Work}
The investigation of the robustness of policy iteration for problems with continuous state/control spaces is available in previous literature. In \cite[Proposition 3.6]{BertsekasOptimalControl2}, for discounted optimal control problems of discrete-time systems, it is reported that
\begin{equation}\label{Bound_Bertsekas}
    \limsup_{i\rightarrow\infty}\max_{x\in \mathbb{X}\subset\mathbb{R}^n}\left(J_{\mu^i}(x)-J^*(x)\right)\leq \frac{\epsilon +2\alpha\delta}{(1-\alpha)^2},
\end{equation}
where $\mu^i$ is the policy generated in $i$th iteration, $\delta$ and $\epsilon$ are the upper bounds of the errors in policy evaluation and policy improvement respectively, $0<\alpha<1$ is the discount factor. Bound \eqref{Bound_Bertsekas} and our bound in Theorem \ref{theorem_global} have the similar styles. However, in our setting the discount factor is $\alpha=1$ so our bound cannot be implied by \eqref{Bound_Bertsekas}. Utilizing the fact that Riccati operator is contractive in Thompson's part metric \cite{thompson1963certain}, it is shown in \cite[Appendix B]{krauth2019finite} that the convergence to the optimal solutions is still achieved in Thompson's part metric, if the errors converge to zero. But it is unclear if this result could imply Theorem \ref{theorem_global} in this paper. Sufficient conditions on the errors are given in \cite[Chapter 2]{hylla2011extension} and \cite[Chapter 2]{boussios1998approach} for continuous-time linear and nonlinear system dynamics respectively, to guarantee that the newly generated control policy is stabilizing and improved. The robustness analysis in this paper is parallel to that in \cite{pang2020robust}. However, since we are dealing with discrete-time systems here, the derivations and proofs are inevitably distinct.

In recent years, there have been resurgent research interests in LQR problems, about learning the optimal solutions from the input/state/output data. The model-based certainty equivalence methods explicitly estimate the values of $A$, $B$ and $C$ in \eqref{LTI_sys_stochastic} from data, and obtain near-optimal solutions based on the estimations, see \cite{pmlr-v19-abbasi-yadkori11a,ouyang2017learning,pmlr-v80-abeille18a,NIPS2018_7673,SHIRANIFARADONBEH2020108982,umenberger2020optimistic,cassel2020logarithmic,basei2020linear}, to name a few. The model-free methods aim at finding the near-optimal solutions directly from the data, without the estimations of system dynamics. Action-value model-free methods learn the value functions of policies, and then generate new (improved) policies based on the estimated value functions, see \cite{735224,pmlr-v80-tu18a,krauth2019finite,pmlr-v89-abbasi-yadkori19a,pmlr-v99-tu19a,doi:10.1137/18M1214147}. Policy-gradient model-free methods directly learn the policies based on the gradient of some scalar performance measure with respect to the policy parameter, see \cite{pmlr-v80-fazel18a,bu2019lqr,preiss2019analyzing,mohammadi2019convergence,yang2019global,qu2020combining,jansch2020convergence,jansch2020policy,pmlr-v120-furieri20a}. Derivative-free model-free methods randomly search in the parameter space of policies for the near-optimal solutions, without explicitly estimate the gradient, see \cite{NIPS2018_7451,pmlr-v89-malik19a,li2019distributed}. 

Most of the model-free methods for LQR mentioned above are on-policy, fewer theoretical results exist for off-policy methods. Among the off-policy action-value model-free methods for LQR, the most related to our proposed O-LSPI are the LSPIv1 in \cite{krauth2019finite} and the MFLQv1 in \cite{pmlr-v89-abbasi-yadkori19a}. However, (a) no convergence result is reported for LSPIv1 in \cite{krauth2019finite}, and (b) MFLQv1 in \cite{pmlr-v89-abbasi-yadkori19a} needs to learn the $P_K$ in \eqref{algebraic_Lyapunov_equation} first in on-policy fashion, before it can learn the $Q(P_K)$ in \eqref{Definition_Q} in off-policy fashion in each iteration, and (c) both the LSPIv1 and MFLQv1 need the knowledge of matrix $C$ in \eqref{LTI_sys_stochastic}, and (d) both the LSPIv1 and MFLQv1 need to solve a pseudo-inverse problem in each iteration. In contrast, in our O-LSPI, (a) a convergence result is given (Theorem \ref{theorem_stochastic_algorithm_convergence}), and (b) both the $P_K$ and $Q(P_K)$ are learned in off-policy fashion (Lines \ref{algorithm_policy_evaluation_begin} to \ref{algorithm_policy_teration_end} in Algorithm \ref{algorithm_off_policy_stochastic}), and (c) no knowledge of $C$ is required, and (d) the pseudo-inverse problem only needs to be solved once.

Finally, it is worth mentioning again that the robustness of RL to errors in the learning processes is analyzed in this paper. This is different from the the robustness of the controllers learned by RL to disturbances in the system dynamics, studied in \cite{9254115,zhang2020on,zhang2020policy,9197000,jiang2017robust}. We also notice that using control theory to study RL algorithms, as we did in this paper, has become popular recently. By regarding the learning processes as dynamical systems, abundant results and techniques in control theory can be applied to obtain better understanding of RL algorithms, see \cite{pmlr-v99-srikant19a,NEURIPS2019_e354fd90,NEURIPS2019_e2c4a40d,lee2019unified,doi:10.1137/18M1214147}.

\section{Concluding Remarks}\label{section_conclusion}
This paper analyzes the robustness of policy iteration for discrete-time LQR. It is proved that starting from any stabilizing initial policy, the solutions generated by policy iteration with errors are bounded and ultimately enter and stay in a neighbourhood of the optimal solution, as long as the errors are small and bounded. This result in the spirit of small-disturbance input-to-state stability is employed to prove the convergence of the optimistic least-squares policy iteration (O-LSPI), a novel off-policy model-free RL algorithm for discrete-time LQR with additive stochastic noises in the dynamics. The theoretical results are verified by the experiments on a numerical example.





\section*{Acknowledgments}
This work has been supported in part by the U.S. National Science Foundation under Grants ECCS-1501044 and EPCN-1903781. Bo Pang thanks Dr. Stephen Tu for sharing the code of the least-squares policy iteration algorithms in \cite{krauth2019finite}.

\bibliography{RPIRef}
\onecolumn
\appendix
\setcounter{section}{0}

\section{Appendix A: Useful auxiliary results}
Some useful properties of $\mathcal{L}_X(\cdot)$ are provided below.
\begin{lemma}\label{Lyapunov_properties}
Suppose $X\in\mathbb{R}^{n\times n}$ is stable.
\begin{enumerate}[label=(\roman*)]
    \item\label{Lyapunov_solution} For each $Z\in\mathbb{R}^{n\times n}$, if $Y\in\mathbb{S}^n$ is the solution of equation $\mathcal{L}_X(Y)=-Z$, then
    $$Y=\sum_{k=0}^{\infty} (X^T)^kZX^k.$$
    \item\label{Lyapunov_monotone} $\mathcal{L}_{X}(Y_1)\leq \mathcal{L}_{X}(Y_2) \implies Y_1\geq Y_2$, where $Y_1,Y_2\in\mathbb{S}^n$.
\end{enumerate}
\end{lemma}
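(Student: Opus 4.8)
The plan is to treat the two parts in sequence, with part \ref{Lyapunov_solution} supplying the explicit solution formula that then drives the monotonicity claim in part \ref{Lyapunov_monotone}.

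For part \ref{Lyapunov_solution}, I would first argue that the series $Y_\infty \triangleq \sum_{k=0}^\infty (X^T)^k Z X^k$ is well defined. Since $X$ is stable, $\rho(X)<1$, so by Gelfand's formula one can fix some $b$ with $\rho(X)<b<1$ together with a constant $a>0$ such that $\Vert X^k\Vert_2\leq ab^k$ for all $k$. Then $\Vert (X^T)^k Z X^k\Vert_2\leq a^2 b^{2k}\Vert Z\Vert_2$, so the partial sums are Cauchy and $Y_\infty$ converges absolutely (this argument does not use symmetry of $Z$, so the formula is valid for any $Z\in\mathbb{R}^{n\times n}$, and $Y_\infty$ is automatically symmetric when $Z$ is). Next I would verify that $Y_\infty$ solves the equation by a telescoping computation: $X^T Y_\infty X - Y_\infty = \sum_{k=1}^\infty (X^T)^k Z X^k - \sum_{k=0}^\infty (X^T)^k Z X^k = -Z$, that is, $\mathcal{L}_X(Y_\infty)=-Z$. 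Finally, because $X$ is stable the matrix $\mathscr{A}(X)=X^T\otimes X^T - I_n\otimes I_n$ is invertible, and by \eqref{equivalent_operator_matrix} the operator $\mathcal{L}_X$ is therefore a bijection on $\mathbb{R}^{n\times n}$; hence the solution of $\mathcal{L}_X(Y)=-Z$ is unique and must coincide with $Y_\infty$.

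For part \ref{Lyapunov_monotone}, I would exploit the linearity of $\mathcal{L}_X$. Set $W=Y_1-Y_2\in\mathbb{S}^n$; the hypothesis $\mathcal{L}_X(Y_1)\leq \mathcal{L}_X(Y_2)$ rearranges, via $\mathcal{L}_X(Y_1)-\mathcal{L}_X(Y_2)=\mathcal{L}_X(W)$, to $\mathcal{L}_X(W)\leq 0$, so that $Z\triangleq -\mathcal{L}_X(W)$ is positive semidefinite. Applying part \ref{Lyapunov_solution} with this $Z$ gives $W=\sum_{k=0}^\infty (X^T)^k Z X^k = \sum_{k=0}^\infty (X^k)^T Z X^k$. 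Each summand is a congruence transform of the positive semidefinite matrix $Z$, hence positive semidefinite, and the positive semidefinite cone is closed under addition and under taking limits; therefore $W\geq 0$, i.e. $Y_1\geq Y_2$.

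The only genuinely technical ingredient is the geometric bound $\Vert X^k\Vert_2\leq ab^k$ underlying convergence; everything else is the telescoping identity, the uniqueness coming from invertibility of $\mathscr{A}(X)$ already noted in the main text, and the elementary fact that congruence preserves positive semidefiniteness. I therefore expect no serious obstacle — the main care is to keep the convergence argument clean so that the termwise manipulations of the infinite series (the reindexing in the telescoping step and the preservation of the L\"owner order under limits) are rigorously justified.
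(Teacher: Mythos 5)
Your proposal is correct and follows essentially the same route as the paper: part \ref{Lyapunov_monotone} is derived from the series representation in part \ref{Lyapunov_solution} together with the fact that congruence preserves positive semidefiniteness (the paper phrases this by comparing the quadratic forms $x_0^TY_1x_0$ and $x_0^TY_2x_0$ along trajectories $x_k=X^kx_0$, which is equivalent to your argument on the difference $W=Y_1-Y_2$). For part \ref{Lyapunov_solution} the paper simply cites a reference, so your explicit convergence, telescoping, and uniqueness argument is a welcome, correct filling-in of that step.
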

\begin{proof}
\ref{Lyapunov_solution} can be found in \cite{1099755}. For \ref{Lyapunov_monotone}, let $x$ be the solution of system $x_{k+1}=Xx_k$, $x(0)=x_0$. Then $x_k = X^kx_0$ and by \ref{Lyapunov_solution}
\begin{equation*}
    x_0^TY_1x_0=\sum_{k=0}^{\infty} x_k^T(-\mathcal{L}_{X}(Y_1))x_k\geq\sum_{k=0}^{\infty} x_k^T(-\mathcal{L}_{X}(Y_2))x_k=x_0^TY_2x_0.
\end{equation*}
$x_0$ is arbitrary, thus $Y_1\geq Y_2$.
\end{proof}
When Definition \ref{definition_exp_stab_nonlinear} is truncated to the linear dynamical system \eqref{LTI_sys}, since $x^*=0$, we have the following definition.
\begin{definition}\label{definition_exp_stab_linear}
System $x_{k+1}=Zx_k$ is exponentially stable if for some $a>0$ and $0<b<1$,
$$\Vert Z^k\Vert_2\leq ab^k.$$
\end{definition}
Definition \ref{definition_exp_stab_linear} is equivalent to the $(\tau,\rho)$-stability defined in \cite[Definition 1]{krauth2019finite}, and is closely related to the strong stability defined in \cite[Definition 5]{cassel2020logarithmic}.
\begin{lemma}[{\cite[Remark 5.5.3.]{agarwal2000difference}}]\label{lemma_LTI_exp_asym_stab}
System $x_{k+1}=Zx_k$ is stable if and only if it is exponentially stable.
\end{lemma}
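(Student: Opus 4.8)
The plan is to prove the two implications separately, recalling that ``stable'' here means the spectral radius satisfies $\rho(Z) < 1$ (as in the Preliminaries), while ``exponentially stable'' is Definition \ref{definition_exp_stab_linear}, i.e.\ the existence of constants $a > 0$ and $0 < b < 1$ with $\Vert Z^k\Vert_2 \leq a b^k$ for all $k \in \mathbb{Z}_+$. The implication from exponential stability to stability is elementary; the reverse implication is the substantive part.

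For the easy direction, I would suppose $\Vert Z^k\Vert_2 \leq a b^k$ with $0 < b < 1$, and let $\lambda \in \mathbb{C}$ be any eigenvalue of $Z$ with unit eigenvector $v$, so that $Z^k v = \lambda^k v$. Then $|\lambda|^k = \Vert Z^k v\Vert_2 \leq \Vert Z^k\Vert_2 \leq a b^k$, whence $|\lambda| \leq a^{1/k} b$. Letting $k \to \infty$ gives $|\lambda| \leq b < 1$, and since $\lambda$ was arbitrary, $\rho(Z) < 1$, so $Z$ is stable.

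For the hard direction, I would assume $\rho(Z) < 1$ and fix any $b$ with $\rho(Z) < b < 1$. The key tool is Gelfand's spectral radius formula, $\rho(Z) = \lim_{k \to \infty} \Vert Z^k\Vert_2^{1/k}$. Since this limit is strictly below $b$, there exists $K \in \mathbb{Z}_+$ such that $\Vert Z^k\Vert_2^{1/k} \leq b$, i.e.\ $\Vert Z^k\Vert_2 \leq b^k$, for all $k \geq K$. The finitely many remaining terms are absorbed by setting $a = \max\{1, \max_{0 \leq k < K} \Vert Z^k\Vert_2 / b^k\}$, which yields $\Vert Z^k\Vert_2 \leq a b^k$ for every $k \geq 0$ and establishes exponential stability.

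The main obstacle is justifying this hard direction without circularity, and the cleanest route is Gelfand's formula. If instead a self-contained argument were preferred, I would pass to the Jordan canonical form $Z = P J P^{-1}$ and bound each block: the entries of $J_\lambda^k$ for a block with eigenvalue $\lambda$, $|\lambda| < 1$, are sums of terms $\binom{k}{j}\lambda^{k-j}$, each of which is polynomially-times-exponentially small and hence dominated by $C b^k$ for any chosen $b \in (\rho(Z), 1)$, since the gap between $|\lambda|$ and $b$ absorbs the polynomial growth. Assembling these block bounds through the similarity transform and submultiplicativity of $\Vert\cdot\Vert_2$ gives the desired $\Vert Z^k\Vert_2 \leq a b^k$. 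Either way, the crux is converting the purely spectral condition $\rho(Z) < 1$ into a uniform exponential bound on the operator norms of the powers.
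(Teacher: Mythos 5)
Your proof is correct. Note, however, that the paper does not prove this lemma at all: it is stated as a known fact with a citation to \cite[Remark 5.5.3]{agarwal2000difference}, so there is no in-paper argument to compare against. What you supply is the standard textbook proof of that classical result. Both of your routes for the substantive direction are sound: the Gelfand-formula argument ($\rho(Z)=\lim_k \Vert Z^k\Vert_2^{1/k}<b<1$ forces $\Vert Z^k\Vert_2\le b^k$ for all large $k$, with the finitely many initial terms absorbed into $a$) and the Jordan-form argument (polynomial-times-exponential decay of each block's entries is dominated by $Cb^k$ for any $b\in(\rho(Z),1)$). The easy direction via $|\lambda|^k=\Vert Z^kv\Vert_2\le ab^k$ and $k\to\infty$ is also fine. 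The only point worth flagging is terminological rather than mathematical: the paper defines ``stable'' for a matrix as $\rho(\cdot)<1$ in the Preliminaries, and you correctly identify that this is the notion being equated with the uniform bound $\Vert Z^k\Vert_2\le ab^k$ of Definition \ref{definition_exp_stab_linear}, so there is no circularity.
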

Generally in Definition \ref{definition_exp_stab_linear}, $a$ and $b$ depend on the choice of matrix $Z$. The next lemma shows that a set of (exponentially) stable systems can share the same constants $a$ and $b$.
\begin{lemma}\label{uniform_exp_stab_over_compact_set}
Let $\mathcal{Z}$ be a compact set of stable matrices, then there exist an $a_0>0$ and a $0<b_0<1$, such that
$$\Vert Z^k\Vert_2\leq a_0 b_0^k,\quad \forall k\in\mathbb{Z}_+$$
for any $Z\in\mathcal{Z}$.
\end{lemma}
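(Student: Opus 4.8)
The plan is to turn the pointwise exponential stability supplied by Lemma \ref{lemma_LTI_exp_asym_stab} into a \emph{uniform} estimate by means of a common Lyapunov function, extracting the uniformity from the compactness of $\mathcal{Z}$ via a continuity argument. First, for each $Z\in\mathcal{Z}$, stability of $Z$ means $\mathscr{A}(Z)=Z^T\otimes Z^T-I_n\otimes I_n$ is invertible, so the discrete Lyapunov equation $\mathcal{L}_Z(P_Z)=-I_n$ has a unique symmetric solution $P_Z=-\mathcal{L}_Z^{-1}(I_n)$. By Lemma \ref{Lyapunov_properties}\ref{Lyapunov_solution}, $P_Z=\sum_{k=0}^{\infty}(Z^T)^kZ^k\geq I_n$, so $P_Z$ is positive definite with $P_Z\geq I_n$.

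Second, I would show that $Z\mapsto P_Z$ is continuous on $\mathcal{Z}$. In vectorized form, $\mathscr{A}(Z)\vect(P_Z)=-\vect(I_n)$; the map $Z\mapsto\mathscr{A}(Z)$ is continuous, and matrix inversion is continuous on the open set of invertible matrices, which contains $\mathscr{A}(Z)$ for every stable $Z$. Hence $Z\mapsto\vect(P_Z)=-\mathscr{A}(Z)^{-1}\vect(I_n)$ is continuous. Since the smallest and largest eigenvalues of a symmetric matrix depend continuously on its entries, and $\mathcal{Z}$ is compact, I may define
$$p_1=\min_{Z\in\mathcal{Z}}\lambda_{\min}(P_Z)\geq 1,\qquad p_2=\max_{Z\in\mathcal{Z}}\lambda_{\max}(P_Z)<\infty,$$
so that $p_1 I_n\leq P_Z\leq p_2 I_n$, with $p_1,p_2$ independent of $Z$.

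Third, I use $P_Z$ as a common Lyapunov function. For the system $x_{k+1}=Zx_k$ and $V(x)=x^TP_Zx$, the identity $Z^TP_ZZ-P_Z=-I_n$ gives $V(x_{k+1})-V(x_k)=-\Vert x_k\Vert_2^2\leq -V(x_k)/p_2$, hence $V(x_{k+1})\leq\mu V(x_k)$ with $\mu=1-1/p_2\in[0,1)$ independent of $Z$. Iterating yields $V(x_k)\leq\mu^k V(x_0)$, and combining with $p_1\Vert x_k\Vert_2^2\leq V(x_k)$ and $V(x_0)\leq p_2\Vert x_0\Vert_2^2$ gives $\Vert Z^k x_0\Vert_2\leq\sqrt{p_2/p_1}\,(\sqrt{\mu})^k\Vert x_0\Vert_2$ for every $x_0$. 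Taking the supremum over unit vectors $x_0$ produces $\Vert Z^k\Vert_2\leq a_0 b_0^k$ with $a_0=\sqrt{p_2/p_1}$ and $b_0=\sqrt{\mu}$ when $\mu>0$ (and any $b_0\in(0,1)$ in the degenerate case $\mu=0$), both independent of $Z\in\mathcal{Z}$, which is the claim.

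The main obstacle is precisely the uniformity: producing a single pair $(a_0,b_0)$ valid for all $Z\in\mathcal{Z}$, since the constants furnished by Lemma \ref{lemma_LTI_exp_asym_stab} a priori depend on $Z$. The crux is therefore the continuity of the Lyapunov solution $Z\mapsto P_Z$, which, together with compactness, converts the $Z$-dependent eigenvalue bounds into the uniform bounds $p_1,p_2$ and hence into uniform decay constants; the remaining steps are routine Lyapunov estimates.
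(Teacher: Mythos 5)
Your proof is correct, but it takes a genuinely different route from the paper's. The paper's proof is a two-line covering argument: it invokes an external perturbation result (Lemma B.1 of Tu and Recht, cited as \cite{pmlr-v99-tu19a}) asserting that every stable matrix has a norm-ball around it on which a single pair $(a,b)$ works, and then extracts a finite subcover of $\mathcal{Z}$ and takes the worst constants. You instead build the uniformity from scratch with a family of Lyapunov functions: solving $\mathcal{L}_Z(P_Z)=-I_n$, using the series representation from Lemma \ref{Lyapunov_properties} to get $P_Z\geq I_n$, observing that $\vect(P_Z)=-\mathscr{A}(Z)^{-1}\vect(I_n)$ depends continuously on $Z$, and letting compactness deliver the uniform eigenvalue bounds $p_1 I_n\leq P_Z\leq p_2 I_n$ that feed the standard decay estimate $V(x_{k+1})\leq(1-1/p_2)V(x_k)$. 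All steps check out, including your handling of the degenerate case $p_2=1$. What your approach buys is self-containedness and explicit constants $a_0=\sqrt{p_2/p_1}$, $b_0=\sqrt{1-1/p_2}$ in terms of the extremal eigenvalues of the Lyapunov solutions; what the paper's approach buys is brevity, at the cost of outsourcing the key local-uniformity step to a cited lemma. Both arguments hinge on the same mechanism --- converting pointwise stability into uniform stability via compactness --- they just package the local ingredient differently (a perturbation ball versus a continuous Lyapunov map).
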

\begin{proof}
For each $Z\in\mathcal{Z}$, by \cite[Lemma B.1]{pmlr-v99-tu19a} there exist $r>0$, $a>0$ and $0<b<1$, such that
$$\Vert Z'^k\Vert_2\leq a b^k,\quad \forall k\in\mathbb{Z}_+$$
for all $\Vert Z'-Z\Vert_2<r$. Then the compactness of $\mathcal{Z}$ completes the proof.
\end{proof}
The following lemma provides the relationship between operations $\vect(\cdot)$ and $\svec(\cdot)$.
\begin{lemma}[{\cite[Page 57]{Magnus2007matrix}}]\label{lemma_relationship_vec_svec}
For $X\in\mathbb{S}^n$, there exists a unique matrix $D_n\in\mathbb{R}^{n^2\times \frac{1}{2}n(n+1)}$ with full column rank, such that
$$\vect(X) = D_n\svec(X),\quad \svec(X) = D_n^\dagger \vect(X).$$
$D_n$ is called the duplication matrix.
\end{lemma}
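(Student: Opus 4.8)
The plan is to exhibit $D_n$ explicitly and then verify each claimed property by direct bookkeeping. First I would fix the two coordinate conventions: $\vect(X)$ stacks the columns of $X$, so the entry $x_{ij}$ sits in position $(j-1)n+i$ of $\vect(X)$; meanwhile $\svec(X)$ is indexed by the pairs $(i,j)$ with $i\le j$, carrying $x_{ii}$ on the diagonal and $\sqrt{2}\,x_{ij}$ off the diagonal. I would then define $D_n$ column-by-column so that it exactly reverses this scaling and duplicates the off-diagonal entries: the column associated with a diagonal index $(i,i)$ has a single $1$ in row $(i-1)n+i$, while the column associated with an off-diagonal index $(i,j)$, $i<j$, has $1/\sqrt{2}$ in the two rows $(j-1)n+i$ and $(i-1)n+j$ and zeros elsewhere. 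With this definition, the identity $\vect(X)=D_n\svec(X)$ reduces to a componentwise comparison: each diagonal position of $\vect(X)$ receives $x_{ii}$, and each off-diagonal position $(j-1)n+i$ (together with its symmetric partner $(i-1)n+j$) receives $\tfrac{1}{\sqrt2}\cdot\sqrt2\,x_{ij}=x_{ij}=x_{ji}$, as required.

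The point I most want to flag is the $\sqrt{2}$ scaling in the paper's $\svec$. This is \emph{not} the plain (unscaled) half-vectorization used by Magnus and Neudecker, so the $D_n$ relevant here is a rescaled variant of the classical duplication matrix, and one must carry the $1/\sqrt2$ factors consistently in order to obtain both the defining identity above and the clean pseudo-inverse formula below. Everything else is routine linear algebra, so this scaling is the only genuine care-point rather than a real obstacle.

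For the structural claims I would argue as follows. Uniqueness is immediate from the fact that $\svec:\mathbb{S}^n\to\mathbb{R}^{n(n+1)/2}$ is a bijection: as $X$ ranges over $\mathbb{S}^n$, $\svec(X)$ ranges over all of $\mathbb{R}^{n(n+1)/2}$, so any matrix satisfying $\vect(X)=D_n\svec(X)$ for every symmetric $X$ is pinned down on a spanning set and must coincide with the constructed $D_n$. Full column rank follows from injectivity: if $D_nv=0$, write $v=\svec(X)$ for the unique $X\in\mathbb{S}^n$ with that $\svec$; then $\vect(X)=D_nv=0$ forces $X=0$, hence $v=0$, so $\ker D_n=\{0\}$ and the columns are linearly independent.

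Finally, the pseudo-inverse identity $\svec(X)=D_n^\dagger\vect(X)$ is precisely where full column rank pays off. Since $D_n$ has full column rank, $D_n^\dagger=(D_n^TD_n)^{-1}D_n^T$, and in particular $D_n^\dagger D_n=I_{n(n+1)/2}$. Applying $D_n^\dagger$ to the already-established identity $\vect(X)=D_n\svec(X)$ then gives $D_n^\dagger\vect(X)=D_n^\dagger D_n\svec(X)=\svec(X)$ for every $X\in\mathbb{S}^n$, which closes the argument. I expect no difficulty beyond the careful indexing and the consistent handling of the $\sqrt2$ factors; the proof is self-contained and does not actually invoke the cited reference.
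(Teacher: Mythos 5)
Your proof is correct, but it is worth noting that the paper offers no proof of this lemma at all---it is simply cited from Magnus and Neudecker---so your argument is a genuinely self-contained alternative rather than a reproduction. Your construction (one column per index pair, a single $1$ for diagonal positions, two entries of $1/\sqrt{2}$ for each off-diagonal pair) verifies $\vect(X)=D_n\svec(X)$ componentwise, and the uniqueness, full-column-rank, and pseudo-inverse arguments are all sound: surjectivity of $\svec$ onto $\mathbb{R}^{\frac{1}{2}n(n+1)}$ pins $D_n$ down, injectivity of $X\mapsto\vect(X)$ on $\mathbb{S}^n$ gives $\ker D_n=\{0\}$, and $D_n^\dagger D_n=I$ then yields the second identity. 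Your flag about the $\sqrt{2}$ scaling is also well taken: the classical duplication matrix of the cited reference satisfies $\vect(X)=D_n\,\mathrm{vech}(X)$ for the \emph{unscaled} half-vectorization, so the matrix in this lemma is a column-rescaled variant, and a reader checking the citation literally would need exactly the bookkeeping you supply. One small observation you could have added for free: with the $1/\sqrt{2}$ normalization your $D_n$ has orthonormal columns with disjoint supports, so $D_n^TD_n=I$ and $D_n^\dagger=D_n^T$, which makes the pseudo-inverse step immediate; but your more general argument via $(D_n^TD_n)^{-1}D_n^T$ is equally valid.
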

For $X\in\mathbb{R}^{n\times n}$, $Y\in\mathbb{R}^{n\times m}$, $X+\Delta X\in\mathbb{R}^{n\times n}$, $Y + \Delta Y\in\mathbb{R}^{n\times m}$, supposing $X$ and $X + \Delta X$ are invertible, the following inequality is repeatedly used
\begin{equation}\label{linear_perturbation}
\begin{split}
    &\Vert X^{-1}Y - (X+\Delta X)^{-1}(Y+\Delta Y)\Vert_F = \left\Vert X^{-1}Y - X^{-1}(Y+\Delta Y)\right. \\
    &\left.+ X^{-1}(Y+\Delta Y) - (X+\Delta X)^{-1}(Y+\Delta Y)\right\Vert_F\\
    &= \Vert -X^{-1}\Delta Y + X^{-1}\Delta X(X + \Delta X)^{-1}(Y+\Delta Y)\Vert_F\\
    &\leq \Vert X^{-1}\Vert_F\Vert \Delta Y\Vert_F + \Vert X^{-1}\Vert_F\Vert(X + \Delta X)^{-1}\Vert_F\Vert(Y+\Delta Y)\Vert_F\Vert\Delta X\Vert_F
\end{split}
\end{equation}

\section{Appendix B: Proof of Lemma \ref{lemma_local_exponentail_stable}}\label{proof_locally_exp_stab}
Since $\mathcal{A}(\mathscr{K}(P^*))$ is stable, by continuity there always exists a $\bar{\delta}_0>0$, such that $\mathscr{R}(P_i)$ is invertible, $\mathcal{A}(\mathscr{K}(P_i))$ is stable for all $P_i\in\mathcal{\bar{B}}_{\bar{\delta}_0}(P^*)$. Suppose $P_i\in\mathcal{\bar{B}}_{\bar{\delta}_0}(P^*)$. Subtracting
$$K_{i+1}^TB^TP^*A+A^TP^*BK_{i+1}-K_{i+1}^T\mathscr{R}(P^*)K_{i+1}$$
from both sides of the ARE \eqref{ARE} for $P=P^*$ and noticing that $K_{i+1} = \mathscr{K}(P_i)$, we have
\begin{equation}\label{ARE_reformulation}
\begin{split}
    &\mathcal{L}_{\mathcal{A}(\mathscr{K}(P_i))}(P^*) = - S - \mathscr{K}^T(P_i)R\mathscr{K}(P_i) + \\
    &(\mathscr{K}(P_i)-\mathscr{K}(P^*))^T\mathscr{R}(P^*)(\mathscr{K}(P_i)-\mathscr{K}(P^*)).
\end{split}
\end{equation}
Subtracting \eqref{ARE_reformulation} from \eqref{Kleinman_one_step}, we have
\begin{equation*}
    \begin{split}
        P_{i+1}-P^* = \mathcal{L}^{-1}_{\mathcal{A}(\mathscr{K}(P_i))}\left(((\mathscr{K}(P_i)-\mathscr{K}(P^*))^T\mathscr{R}(P^*)(\mathscr{K}(P_i)-\mathscr{K}(P^*))\right).
    \end{split}
\end{equation*}
Taking norm on both sides of above equation, \eqref{equivalent_operator_matrix} yields
\begin{equation*}
    \begin{split}
        \Vert P_{i+1}-P^* \Vert_F \leq \Vert \mathscr{A}(\mathcal{A}(\mathscr{K}(P_i)))^{-1}\Vert_2 \Vert \mathscr{R}(P^*)\Vert_F\Vert \mathscr{K}(P_i)-\mathscr{K}(P^*)\Vert_F^2.
    \end{split}
\end{equation*}
Since $\mathscr{K}(\cdot)$ is locally Lipschitz continuous at $P^*$, by continuity of matrix norm and matrix inverse, there exists a $c_1>0$, such that
$$\Vert P_{i+1}-P^* \Vert_F \leq c_1\Vert P_i - P^*\Vert_F^2,\quad \forall P_i\in\mathcal{\bar{B}}_{\bar{\delta}_0}(P^*).$$
So for any $0<\sigma<1$, there exists a $\bar{\delta}_0\geq\delta_0>0$ with $c_1\delta_0\leq \sigma$. This completes the proof.

\section{Appendix C: Proof of Lemma \ref{lemma_local_ISS}}\label{proof_local_ISS}
Before proving Lemma \ref{lemma_local_ISS}, we firstly prove some auxiliary lemmas. The Procedure \ref{procedure_robust_policy_iteration} will exhibit a singularity, if $[\hat{G}_{i}]_{uu}$ in \eqref{eRPI_PI} is singular, or the cost \eqref{cost_deterinistic} of $\hat{K}_{i+1}$ is infinity. The following lemma shows that if $\Delta G_i$ is small, no singularity will occur. Let $\bar{\delta}_0$ be the one defined in the proof of Lemma \ref{lemma_local_exponentail_stable}, then $\delta_0\leq\bar{\delta}_0$.
\begin{lemma}\label{lemma_stability_ISS}
There exists a $d(\delta_0)>0$ such that for all $\tilde{P}_i\in\mathcal{B}_{\delta_0}(P^*)$, $\hat{K}_{i+1}$ is stabilizing and $[\hat{G}_{i}]_{uu}$ is invertible, if $\Vert\Delta G_i\Vert_F\leq d$.
\end{lemma}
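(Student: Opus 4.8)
Both assertions are perturbation statements, so the strategy is to exploit the regular behaviour of the \emph{unperturbed} quantities over the compact ball $\mathcal{\bar{B}}_{\delta_0}(P^*)$ guaranteed by Lemma \ref{lemma_local_exponentail_stable}, and then absorb $\Delta G_i$ as a small perturbation. Write the relevant blocks as $[\hat{G}_i]_{uu} = \mathscr{R}(\tilde{P}_i) + [\Delta G_i]_{uu}$ and $[\hat{G}_i]_{ux} = B^T\tilde{P}_iA + [\Delta G_i]_{ux}$, so that $\hat{K}_{i+1} = [\hat{G}_i]_{uu}^{-1}[\hat{G}_i]_{ux}$ is a perturbation of $\mathscr{K}(\tilde{P}_i) = \mathscr{R}(\tilde{P}_i)^{-1}B^T\tilde{P}_iA$, and accordingly $A - B\hat{K}_{i+1}$ is a perturbation of the stable matrix $\mathcal{A}(\mathscr{K}(\tilde{P}_i))$. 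Since $\delta_0\leq\bar{\delta}_0$, the choice of $\bar{\delta}_0$ in the proof of Lemma \ref{lemma_local_exponentail_stable} ensures that on the compact set $\mathcal{\bar{B}}_{\delta_0}(P^*)$ the map $P\mapsto\mathscr{R}(P)$ is invertible with continuous inverse and $\mathcal{A}(\mathscr{K}(P))$ is stable; all bounds over this closed ball dominate those over the open ball $\mathcal{B}_{\delta_0}(P^*)$ appearing in the statement.

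\textbf{Invertibility of $[\hat{G}_i]_{uu}$.} By compactness and continuity, $\mu\triangleq\sup_{P\in\mathcal{\bar{B}}_{\delta_0}(P^*)}\Vert\mathscr{R}(P)^{-1}\Vert_2<\infty$. If $d<1/(2\mu)$, then $\Vert[\Delta G_i]_{uu}\Vert_2\leq\Vert\Delta G_i\Vert_F\leq d<1/\Vert\mathscr{R}(\tilde{P}_i)^{-1}\Vert_2$, so the standard Neumann-series argument shows $[\hat{G}_i]_{uu}=\mathscr{R}(\tilde{P}_i)+[\Delta G_i]_{uu}$ is invertible, with $\Vert[\hat{G}_i]_{uu}^{-1}\Vert_2\leq 2\mu$ uniformly in $\tilde{P}_i$ and $\Delta G_i$.

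\textbf{Stability of $A-B\hat{K}_{i+1}$.} Applying \eqref{linear_perturbation} with $X=\mathscr{R}(\tilde{P}_i)$, $Y=B^T\tilde{P}_iA$, $\Delta X=[\Delta G_i]_{uu}$, $\Delta Y=[\Delta G_i]_{ux}$, and using the uniform bounds on $\Vert\mathscr{R}(\tilde{P}_i)^{-1}\Vert_F$, $\Vert[\hat{G}_i]_{uu}^{-1}\Vert_F$, $\Vert[\hat{G}_i]_{ux}\Vert_F$ over the compact ball, there is a constant $c(\delta_0)>0$ with $\Vert\hat{K}_{i+1}-\mathscr{K}(\tilde{P}_i)\Vert_F\leq c\Vert\Delta G_i\Vert_F$. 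Hence $A-B\hat{K}_{i+1}=\mathcal{A}(\mathscr{K}(\tilde{P}_i))-B(\hat{K}_{i+1}-\mathscr{K}(\tilde{P}_i))$ differs from $\mathcal{A}(\mathscr{K}(\tilde{P}_i))$ by at most $\Vert B\Vert_2 c\Vert\Delta G_i\Vert_F$ in the $2$-norm. The set $\mathcal{Z}\triangleq\{\mathcal{A}(\mathscr{K}(P)):P\in\mathcal{\bar{B}}_{\delta_0}(P^*)\}$ is a compact set of stable matrices, being the continuous image of a compact set. From the proof of Lemma \ref{uniform_exp_stab_over_compact_set}, each member of $\mathcal{Z}$ has a neighbourhood of some positive radius consisting of stable matrices; a finite subcover then yields a uniform radius $r_0>0$ such that every matrix within $r_0$ of $\mathcal{Z}$ is stable. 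Choosing $d$ so that $\Vert B\Vert_2 c\,d<r_0$ forces $A-B\hat{K}_{i+1}$ to be stable. Taking $d(\delta_0)$ as the minimum of this bound and $1/(2\mu)$ completes the argument.

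\textbf{Main obstacle.} The delicate point is the last step: Lemma \ref{uniform_exp_stab_over_compact_set} as stated only supplies uniform decay constants $a_0,b_0$, whereas I need the stronger fact that a \emph{whole} uniform neighbourhood of the compact set $\mathcal{Z}$ remains stable. I would make this explicit by re-running the finite-subcover argument directly on the per-point stability-robust radii furnished by \cite[Lemma B.1]{pmlr-v99-tu19a}, rather than quoting the lemma as a black box. A secondary care point is that all the perturbation constants $\mu$, $c$ and the bound on $\Vert[\hat{G}_i]_{uu}^{-1}\Vert$ must be uniform in $\tilde{P}_i$ over the ball; this is exactly where compactness is used repeatedly, and it is why the invertibility step must be established before the Lipschitz estimate for $\hat{K}_{i+1}$.
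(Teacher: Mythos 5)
Your proposal is correct and follows essentially the same route as the paper's proof: compactness of $\{\mathcal{A}(\mathscr{K}(P)) : P\in\mathcal{\bar{B}}_{\bar{\delta}_0}(P^*)\}$ plus per-point stability radii and a finite subcover to get a uniform radius, combined with continuity of $\hat{K}_{i+1}$ in $\hat{G}_i$ (which you make quantitative via \eqref{linear_perturbation}) and a uniform invertibility margin for $[\hat{G}_i]_{uu}$. The concern you flag about Lemma \ref{uniform_exp_stab_over_compact_set} is moot, since the paper likewise runs the finite-subcover argument on stability radii directly rather than through that lemma.
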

\begin{proof}
Since $\mathcal{\bar{B}}_{\bar{\delta}_0}(P^*)$ is compact and $\mathcal{A}(\mathscr{K}(\cdot))$ is a continuous function, set $$\mathcal{S} = \{\mathcal{A}(\mathscr{K}(\tilde{P}_i))\vert \tilde{P}_i\in\mathcal{\bar{B}}_{\bar{\delta}_0}(P^*)\}$$ is also compact. By continuity, for each $X\in\mathcal{S}$, there exists a $r(X)>0$ such that any $Y\in\mathcal{B}_{r(X)}(X)$ is stable. The compactness of $\mathcal{S}$ implies the existence of a $\underline{r}>0$, such that each $Y\in\mathcal{B}_{\underline{r}}(X)$ is stable for all $X\in\mathcal{S}$.
Similarly, there exists $d_1>0$ such that $[\hat{G}_{i}]_{uu}$ is invertible for all $\tilde{P}_i\in\mathcal{\bar{B}}_{\bar{\delta}_0}(P^*)$, if $\Vert\Delta G_i\Vert_F\leq d_1$. Note that in policy improvement step of Procedure \ref{procedure_policy_itration} (the policy update step in Procedure \ref{procedure_robust_policy_iteration}), the improved policy $\tilde{K}_{i+1}=[\tilde{G}_{i}]_{uu}^{-1}[\tilde{G}_{i}]_{ux}$ (the updated policy $\hat{K}_{i+1}$) is continuous function of $\tilde{G}_i$ ($\hat{G}_i$), and there exists a $0<d_2\leq d_1$, such that $\mathcal{A}(\hat{K}_{i+1})\in\mathcal{B}_{\underline{r}}(\mathcal{A}(\mathscr{K}(\tilde{P}_i)))$ for all $\tilde{P}_i\in\mathcal{\bar{B}}_{\bar{\delta}_0}(P^*)$, if $\Vert\Delta G_i\Vert_F\leq d_2$. Thus $\hat{K}_{i+1}$ is stabilizing. Setting $d=d_2$ completes the proof.
\end{proof}
By Lemma \ref{lemma_stability_ISS}, if $\Vert\Delta G_i\Vert_\infty\leq d$, then the sequence $\{\tilde{P}_i\}_{i=0}^{\infty}$ satisfies \eqref{robust_Kleinman_one_step}. For simplicity, we denote $\mathcal{E}(\tilde{G}_i,\Delta G_i)$ in \eqref{robust_Kleinman_one_step} by $\mathcal{E}_i$. The following lemma gives an upper bound on $\Vert \mathcal{E}_i\Vert_F$ in terms of $\Vert\Delta G_i\Vert_F$.
\begin{lemma}\label{lemma_total_error_bounded_ISS}
For any $\tilde{P}_i\in\mathcal{B}_{\delta_0}(P^*)$ and any $c_2>0$, there exists a $0<\delta_1^1(\delta_0,c_2)\leq d$, independent of $\tilde{P}_i$, where $d$ is defined in Lemma \ref{lemma_stability_ISS}, such that
$$\Vert \mathcal{E}_i\Vert_F\leq c_3\Vert \Delta G_i\Vert_F<c_2,$$ 
if $\Vert \Delta G_i\Vert_F<\delta_1^1$, where $c_3(\delta_0)>0$.
\end{lemma}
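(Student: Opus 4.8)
The plan is to bound $\Vert\mathcal{E}_i\Vert_F$ through two successive Lipschitz estimates, each with a constant that is uniform in $\tilde{P}_i$: first that the perturbed improved gain $\hat{K}_{i+1}$ differs from the exact improved gain $\mathscr{K}(\tilde{P}_i)$ by at most $O(\Vert\Delta G_i\Vert_F)$, and then that the Lyapunov-solution map sending a stabilizing gain $K$ to $M(K):=\mathcal{L}_{\mathcal{A}(K)}^{-1}(-S-K^TRK)$ is Lipschitz on a fixed compact set of stabilizing gains containing both $\hat{K}_{i+1}$ and $\mathscr{K}(\tilde{P}_i)$. Composing the two estimates gives $\Vert\mathcal{E}_i\Vert_F=\Vert M(\hat{K}_{i+1})-M(\mathscr{K}(\tilde{P}_i))\Vert_F\le c_3\Vert\Delta G_i\Vert_F$, and shrinking $\delta_1^1$ then produces the strict bound by $c_2$.

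First I would control the gains. Since $\mathscr{K}(\tilde{P}_i)=[\tilde{G}_i]_{uu}^{-1}[\tilde{G}_i]_{ux}$ and $\hat{K}_{i+1}=[\hat{G}_i]_{uu}^{-1}[\hat{G}_i]_{ux}$ with $\hat{G}_i=\tilde{G}_i+\Delta G_i$, I would apply the perturbation inequality \eqref{linear_perturbation} with $X=[\tilde{G}_i]_{uu}$, $\Delta X=[\Delta G_i]_{uu}$, $Y=[\tilde{G}_i]_{ux}$, $\Delta Y=[\Delta G_i]_{ux}$. Because $\tilde{P}_i$ ranges over the compact ball $\mathcal{\bar{B}}_{\bar{\delta}_0}(P^*)$, the quantities $\Vert[\tilde{G}_i]_{uu}^{-1}\Vert_F$ and $\Vert[\tilde{G}_i]_{ux}\Vert_F$ are uniformly bounded; for $\Vert\Delta G_i\Vert_F\le d$ the same holds for $\Vert[\hat{G}_i]_{uu}^{-1}\Vert_F$ and $\Vert[\hat{G}_i]_{ux}\Vert_F$ by Lemma \ref{lemma_stability_ISS}. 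This yields a constant $\ell_1(\delta_0)>0$, independent of $\tilde{P}_i$, with $\Vert\hat{K}_{i+1}-\mathscr{K}(\tilde{P}_i)\Vert_F\le \ell_1\Vert\Delta G_i\Vert_F$.

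Second I would estimate the difference of the Lyapunov solutions. Writing $K_a=\mathscr{K}(\tilde{P}_i)$, $K_b=\hat{K}_{i+1}$ and applying the operator $\mathcal{L}_{\mathcal{A}(K_b)}$ to $\mathcal{E}_i=M(K_b)-M(K_a)$, the defining Lyapunov equations give
\[
\mathcal{L}_{\mathcal{A}(K_b)}(\mathcal{E}_i)=-(K_b^TRK_b-K_a^TRK_a)-\left(\mathcal{A}(K_b)^TM(K_a)\mathcal{A}(K_b)-\mathcal{A}(K_a)^TM(K_a)\mathcal{A}(K_a)\right).
\]
Both right-hand brackets are $O(\Vert K_b-K_a\Vert_F)$ on a compact set, since $\mathcal{A}(K)=A-BK$ is affine in $K$ and $M(K_a)$, $\mathcal{A}(K)$ are bounded there. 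The set $\{\mathscr{K}(\tilde{P}_i):\tilde{P}_i\in\mathcal{\bar{B}}_{\bar{\delta}_0}(P^*)\}$ is compact as the continuous image of a compact set, and by Step 1 every $\hat{K}_{i+1}$ lies within distance $\ell_1 d$ of it; together with Lemma \ref{lemma_stability_ISS} this places both gains in a common compact set $\mathcal{K}$ of stabilizing gains. Then $\{\mathcal{A}(K):K\in\mathcal{K}\}$ is a compact set of stable matrices, so Lemma \ref{uniform_exp_stab_over_compact_set} supplies uniform constants $a_0,b_0$, and via the series formula in Lemma \ref{Lyapunov_properties}\ref{Lyapunov_solution} the operator norm of $\mathcal{L}_{\mathcal{A}(K_b)}^{-1}$ is bounded by $a_0^2/(1-b_0^2)$, uniformly in $\tilde{P}_i$. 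Applying this inverse yields $\Vert\mathcal{E}_i\Vert_F\le \ell_2\Vert K_b-K_a\Vert_F$ with $\ell_2(\delta_0)>0$.

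Combining the two steps gives $\Vert\mathcal{E}_i\Vert_F\le c_3\Vert\Delta G_i\Vert_F$ with $c_3:=\ell_1\ell_2$ depending only on $\delta_0$; choosing $\delta_1^1:=\min\{d,\,c_2/c_3\}$ then forces $\Vert\mathcal{E}_i\Vert_F<c_2$ whenever $\Vert\Delta G_i\Vert_F<\delta_1^1$. The main obstacle is uniformity: every constant must be independent of the particular $\tilde{P}_i\in\mathcal{B}_{\delta_0}(P^*)$, so the argument hinges on packaging the gains and their closed-loop matrices into fixed compact sets and invoking Lemma \ref{uniform_exp_stab_over_compact_set} to bound $\Vert\mathcal{L}_{\mathcal{A}(K_b)}^{-1}\Vert$ simultaneously across all these systems, rather than estimating each Lyapunov solution separately.
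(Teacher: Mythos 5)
Your proposal is correct and follows essentially the same route as the paper: the first step (bounding $\Vert\hat{K}_{i+1}-\mathscr{K}(\tilde{P}_i)\Vert_F$ by $O(\Vert\Delta G_i\Vert_F)$ via \eqref{linear_perturbation} together with compactness of $\mathcal{\bar{B}}_{\bar{\delta}_0}(P^*)$ and Lemma \ref{lemma_stability_ISS}) is identical, and the second step is the same Lyapunov-perturbation estimate organized slightly differently --- you derive a Lyapunov equation for $\mathcal{E}_i$ and bound $\Vert\mathcal{L}_{\mathcal{A}(\hat{K}_{i+1})}^{-1}\Vert$ uniformly via the series formula and Lemma \ref{uniform_exp_stab_over_compact_set}, whereas the paper vectorizes both solutions as $\mathscr{A}^{-1}b$ and applies \eqref{linear_perturbation} again. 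Both versions rest on the same two ingredients (the gain perturbation bound and a uniform bound on the inverse Lyapunov operator over a compact set of stable closed-loop matrices), so the difference is purely presentational.
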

\begin{proof}
For any $\tilde{P}_i\in\bar{\mathcal{B}}_{\delta_0}(P^*)$, $\Vert \Delta G_i\Vert_F\leq d$, we have from \eqref{linear_perturbation}
\begin{align}
    \Vert \mathscr{K}(\tilde{P}_i) - \hat{K}_{i+1}\Vert_F
    &\leq \Vert [\tilde{G}_{i}]^{-1}_{uu}\Vert_F\Vert\Delta G_i\Vert_F + \Vert[\hat{G}_{i}]^{-1}_{uu}\Vert_F \Vert[\tilde{G}_{i}]^{-1}_{uu}\Vert_F\Vert[\hat{G}_{i}]_{ux}\Vert_F\Vert \Delta G_i\Vert_F \nonumber \\
    &\leq c_4(\delta_0,d)\Vert \Delta G_i\Vert_F, \label{K_close}
\end{align}
where the last inequality comes from the continuity of matrix inverse and the extremum value theorem. Define
\begin{equation*}
\begin{split}
   \check{P}_{i} = \mathcal{L}_{\mathcal{A}(\hat{K}_{i+1})}^{-1}\left(-S-\hat{K}^T_{i+1}R\hat{K}_{i+1}\right),\quad \mathring{P}_{i} = \mathcal{L}_{\mathcal{A}(\mathscr{K}(\tilde{P}_i))}^{-1}\left(- S - \mathscr{K}(\tilde{P}_i)^TR\mathscr{K}(\tilde{P}_i)\right).    
\end{split}
\end{equation*}
Then by \eqref{equivalent_operator_matrix} and \eqref{robust_Kleinman_one_step},
\begin{equation*}
    \begin{split}
        \Vert\mathcal{E}_i\Vert_F &= \Vert \vect(\check{P}_{i}- \mathring{P}_{i})\Vert_2 \\
        \vect(\check{P}_{i}) &= \mathscr{A}^{-1}\left(\mathcal{A}(\hat{K}_{i+1})\right)\vect\left(-S-\hat{K}^T_{i+1}R\hat{K}_{i+1}\right) \\
        \vect(\mathring{P}_{i}) &= \mathscr{A}^{-1}\left(\mathcal{A}(\mathscr{K}(\tilde{P}_i))\right)\vect\left(- S - \mathscr{K}(\tilde{P}_i)^TR\mathscr{K}(\tilde{P}_i)\right).
    \end{split}
\end{equation*}
Define
\begin{equation*}
    \begin{split}
        \Delta \mathscr{A}_i &= \mathscr{A}\left(\mathcal{A}(\mathscr{K}(\tilde{P}_i))\right) - \mathscr{A}\left(\mathcal{A}(\hat{K}_{i+1})\right),\quad
        \Delta b_i = \vect\left(\mathscr{K}(\tilde{P}_i)^TR\mathscr{K}(\tilde{P}_i) - \hat{K}_{i+1}^TR\hat{K}_{i+1}\right).
    \end{split}
\end{equation*}
Using \eqref{K_close}, it is easy to check that $\Vert \Delta \mathscr{A}_i\Vert_F\leq c_5\Vert\Delta G_i\Vert_F$, $\Vert \Delta b_i\Vert_2\leq c_6\Vert\Delta G_i\Vert_F$, for some $c_5(\delta_0,d)>0$, $c_6(\delta_0,d)>0$. Then by \eqref{linear_perturbation}
\begin{equation*}
    \begin{split}
        \Vert\mathcal{E}_i \Vert_F&\leq \left\Vert \mathscr{A}^{-1}\left(\mathcal{A}(\hat{K}_{i+1})\right)\right\Vert_F \left(c_6 + c_5\left\Vert\mathscr{A}^{-1}\left(\mathcal{A}(\mathscr{K}(\tilde{P}_i))\right)\right\Vert_F\right.\\
        &\left.\times\left\Vert S + \mathscr{K}(\tilde{P}_i)^TR\mathscr{K}(\tilde{P}_i)\right\Vert_F\right)\Vert \Delta G_i\Vert_F \\
        &\leq c_3(\delta_0)\Vert \Delta G_i\Vert_F
    \end{split}
\end{equation*}
where the last inequality comes from the continuity of matrix inverse and Lemma \ref{lemma_stability_ISS}. Choosing $0<\delta^1_1\leq d$ such that $c_3\delta^1_1<c_2$ completes the proof.

\end{proof}
Now we are ready to prove Lemma \ref{lemma_local_ISS}.
\begin{proof}[Proof of Lemma \ref{lemma_local_ISS}]
Let $c_2=(1-\sigma)\delta_0$ in Lemma \ref{lemma_total_error_bounded_ISS}, and $\delta_1$ be equal to the $\delta_1^1$ associated with $c_2$. For any $i\in\mathbb{Z}_+$, if $\tilde{P}_i\in\mathcal{B}_{\delta_0}(P^*)$, then $[\hat{G}_i]_{uu}$ is invertible, $\hat{K}_{i+1}$ is stabilizing and  
\begin{align}
    \Vert \tilde{P}_{i+1} - P^*\Vert_F&\leq \Vert \mathcal{E}_i\Vert_F +\nonumber \left\Vert \mathcal{L}^{-1}_{\mathcal{A}(\mathscr{K}(\tilde{P}_i))}(S+\tilde{P}^T_iBR^{-1}B^T\tilde{P}_i) - P^* \right\Vert_F \nonumber \\
    &\leq \sigma\Vert \tilde{P}_i - P^*\Vert_F + c_3\Vert\Delta G_i\Vert_F \label{proof_lemma_local_ISS_inequality_1}\\
    &\leq  \sigma\Vert \tilde{P}_i - P^*\Vert_F + c_3\Vert\Delta G\Vert_\infty \label{proof_lemma_local_ISS_inequality_2}\\
    &<\sigma\delta_0 + c_3\delta_1<\sigma\delta_0 + c_2 =\delta_0, \label{proof_lemma_local_ISS_inequality_3}
\end{align}
where \eqref{proof_lemma_local_ISS_inequality_1} and \eqref{proof_lemma_local_ISS_inequality_3} are due to Lemmas \ref{lemma_local_exponentail_stable} and \ref{lemma_total_error_bounded_ISS}. By induction, \eqref{proof_lemma_local_ISS_inequality_1} to \eqref{proof_lemma_local_ISS_inequality_3} hold for all $i\in\mathbb{Z}_+$, thus by \eqref{proof_lemma_local_ISS_inequality_2},
\begin{equation*}
    \begin{split}
        \Vert \tilde{P}_{i} - P^*\Vert_F &\leq \sigma^2\Vert \tilde{P}_{i-2}-P^*\Vert_F + (\sigma + 1)c_3\Vert\Delta G\Vert_\infty\\
        &\leq \cdots \leq \sigma^{i}\Vert \tilde{P}_{0}-P^*\Vert_F + (1+ \cdots + \sigma^{i-1})c_3\Vert\Delta G\Vert_\infty\\
        &<\sigma^{i}\Vert \tilde{P}_{0}-P^*\Vert_F + \frac{c_3}{1-\sigma}\Vert\Delta G\Vert_\infty,
    \end{split}
\end{equation*}
which proves \ref{lemma_local_ISS_item_well_defined} and \ref{lemma_local_ISS_item_local_ISS} in Lemma \ref{lemma_local_ISS}. Then \eqref{K_close} implies \ref{lemma_local_ISS_item_K_bounded} in Lemma \ref{lemma_local_ISS}.

In terms of \ref{lemma_local_ISS_item_converging_input_converging_state} in Lemma \ref{lemma_local_ISS}, for any $\epsilon>0$, there exists a $i_1\in\mathbb{Z}_+$, such that $\sup\{\Vert\Delta G_i\Vert_F\}_{i=i_1}^\infty<\gamma^{-1}(\epsilon/2)$. Take $i_2\geq i_1$. For $i\geq i_2$, we have by \ref{lemma_local_ISS_item_local_ISS} in Lemma \ref{lemma_local_ISS},
\begin{equation*}
    \begin{split}
       \Vert \tilde{P}_{i}-P^*\Vert_F\leq\beta(\Vert\tilde{P}_{i_2}-P^*\Vert_F,i-i_2) + \epsilon/2 \leq \beta(c_7,i-i_2) + \epsilon/2.
    \end{split}
\end{equation*}
where the second inequality is due to the boundedness of $\tilde{P}_i$. Since $\lim_{i\rightarrow\infty}\beta(c_7,i-i_2)=0$, there is a $i_3\geq i_2$ such that $\beta(c_7,i-i_2)<\epsilon/2$ for all $i\geq i_3$, which completes the proof.
\end{proof}
\section{Appendix D: Proof of Theorem \ref{theorem_global}}\label{proof_theorem_global}
Notice that all the conclusions of Theorem \ref{theorem_global} can be implied by Lemma \ref{lemma_local_ISS} if
$$\delta_2<\min(\gamma^{-1}(\epsilon),\delta_1),\quad \tilde{P}_1\in\mathcal{B}_{\delta_0}(P^*)$$ 
for Procedure \ref{procedure_robust_policy_iteration}. Thus the proof of Theorem \ref{theorem_global} reduces to the proof of the following lemma.
\begin{lemma}\label{lemma_converge_to_neighbourhood}
Given a stabilizing $\hat{K}_1$, there exist $0<\delta_2<\min(\gamma^{-1}(\epsilon),\delta_1)$, $\bar{i}\in\mathbb{Z}_+$, $\Pi_2>0$ and $\kappa_2>0$, such that $[\hat{G}_i]_{uu}$ is invertible, $\hat{K}_i$ is stabilizing, $\Vert \tilde{P}_i\Vert_F<\Pi_2$, $\Vert \hat{K}_i\Vert_F<\kappa_2$, $i=1,\cdots,\bar{i}$, $\tilde{P}_{\bar{i}}\in\mathcal{B}_{\delta_0}(P^*)$, as long as $\Vert \Delta G\Vert_\infty<\delta_2$.
\end{lemma}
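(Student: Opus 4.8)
The plan is to reduce this lemma to a finite-horizon continuous-dependence argument by comparing the inexact trajectory of Procedure \ref{procedure_robust_policy_iteration} against the \emph{exact} policy iteration Procedure \ref{procedure_policy_itration} launched from the same initial gain. The crucial observation is that only finitely many iterations are required to reach $\mathcal{B}_{\delta_0}(P^*)$, so the contraction factor $\sigma$ of Lemma \ref{lemma_local_exponentail_stable} (which is available only inside that ball) is not yet needed; instead, finiteness of the horizon is what keeps the accumulated error under control.

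First I would run Procedure \ref{procedure_policy_itration} with $K_1=\hat{K}_1$, producing exact iterates $\{(K_i,P_i)\}$. By Theorem \ref{theorem_standard_PI}, every $K_i$ is stabilizing and $P_i\rightarrow P^*$, so there is a finite $\bar{i}\in\mathbb{Z}_+$ with $P_{\bar{i}}\in\mathcal{B}_{\delta_0/2}(P^*)$. Over this fixed horizon the finitely many quantities $K_1,\dots,K_{\bar{i}}$ and $P_1,\dots,P_{\bar{i}}$ are bounded, each $\mathcal{A}(K_i)=A-BK_i$ is stable, and each $[G_i]_{uu}=R+B^TP_iB$ is invertible. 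Since $\hat{K}_1=K_1$ and the inexact evaluation at $i=1$ is by definition the true cost of $\hat{K}_1$, the base case $\tilde{P}_1=P_1$ holds exactly; the disturbance $\Delta G_1$ first enters the construction of $\hat{K}_2$.

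Next I would establish step-wise continuity of the composite map $\hat{K}_i\mapsto\tilde{P}_i=P_{\hat{K}_i}\mapsto\hat{G}_i=G(\tilde{P}_i)+\Delta G_i\mapsto\hat{K}_{i+1}=[\hat{G}_i]_{uu}^{-1}[\hat{G}_i]_{ux}$ on the open set where $\hat{K}_i$ is stabilizing and $[\hat{G}_i]_{uu}$ is invertible. Continuity of $\hat{K}_i\mapsto\tilde{P}_i$ follows from Lemma \ref{Lyapunov_properties}\ref{Lyapunov_solution} together with the fact that stability is an open condition; continuity of the policy update and invertibility of $[\hat{G}_i]_{uu}$ under small perturbations are handled by \eqref{linear_perturbation} and continuity of matrix inversion. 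This yields estimates $\Vert\tilde{P}_{i+1}-P_{i+1}\Vert_F\leq L_i\Vert\tilde{P}_i-P_i\Vert_F+M_i\Vert\Delta G_i\Vert_F$, valid while the iterates remain in suitable neighbourhoods of the exact trajectory, with finite $L_i,M_i$. Unrolling over the finite horizon gives $\Vert\tilde{P}_{\bar{i}}-P_{\bar{i}}\Vert_F\leq C(\bar{i})\Vert\Delta G\Vert_\infty$ for a constant $C(\bar{i})>0$ determined solely by the exact trajectory. I would then take $\delta_2<\min(\gamma^{-1}(\epsilon),\delta_1,\delta_0/(2C(\bar{i})))$, which forces $\Vert\tilde{P}_{\bar{i}}-P_{\bar{i}}\Vert_F<\delta_0/2$ and hence $\tilde{P}_{\bar{i}}\in\mathcal{B}_{\delta_0}(P^*)$ by the triangle inequality, while the boundedness constants are set by $\Pi_2=\max_{1\leq i\leq\bar{i}}\Vert P_i\Vert_F+\delta_0$ and $\kappa_2=\max_{1\leq i\leq\bar{i}}\Vert K_i\Vert_F$ enlarged by the tube radius.

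The main obstacle is the apparent circularity: the Lipschitz estimates hold only while the perturbed iterates stay in the region of stable closed loops and invertible $[\hat{G}_i]_{uu}$, yet those estimates are what I use to prove the iterates stay there. I would resolve this by a \emph{tube} induction exploiting the finiteness of $\bar{i}$: for each $i$ choose a radius $\rho_i>0$ so that every gain within $\rho_i$ of $K_i$ is stabilizing with invertible corresponding $[\cdot]_{uu}$, and then select $\delta_2$ as the minimum, over the finitely many steps, of the tolerances needed to keep each $\tilde{P}_i$ within $\rho_i$ of $P_i$. Because there are only $\bar{i}$ steps, this minimum is strictly positive, and the induction closes without any contraction, delivering simultaneously the well-definedness, stabilizability, boundedness, and the terminal inclusion $\tilde{P}_{\bar{i}}\in\mathcal{B}_{\delta_0}(P^*)$ required to then invoke Lemma \ref{lemma_local_ISS}.
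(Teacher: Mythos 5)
Your proposal is correct, but it takes a genuinely different route from the paper. You shadow the single exact trajectory of Procedure \ref{procedure_policy_itration} launched from $K_1=\hat{K}_1$: pick $\bar i$ so that $P_{\bar i}\in\mathcal{B}_{\delta_0/2}(P^*)$ (Theorem \ref{theorem_standard_PI}), and then a finite-horizon tube induction with local Lipschitz/continuity estimates keeps $\tilde P_i$ within a prescribed radius of $P_i$ for all $i\le\bar i$, which simultaneously yields stabilizability, invertibility of $[\hat G_i]_{uu}$, the bounds $\Pi_2,\kappa_2$, and the terminal inclusion. This is sound: the circularity you flag is genuinely resolved by fixing the tubes from the exact trajectory in advance and exploiting that only finitely many steps occur, and your base case $\tilde P_1=P_1$ is right since $\Delta G$ first acts in forming $\hat K_2$. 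The paper instead proceeds in two stages that never reference the exact trajectory from $\hat K_1$: first it proves \emph{a priori} that the inexact iterates stay bounded and stabilizing under the explicit error threshold $(1+i^2)^{-1}a_i$ via a Lyapunov/trace argument (Lemmas \ref{lemma_stability} and \ref{lemma_boundedness}, giving $\tilde P_i\le 6\tilde P_1$ and an explicit gain bound $C_0$); then, on the resulting compact set $\mathcal{M}$ of reachable iterates, it uses a uniform hitting time $k_0$ for the exact Newton recursion started from \emph{any} point of $\mathcal{M}$, and a Gronwall comparison over only the last $k_0$ steps to conclude $\tilde P_{\bar i}\in\mathcal{B}_{\delta_0}(P^*)$. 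Your argument is more elementary and shorter; what the paper's version buys is trajectory-independent, explicit bounds ($6\Vert\tilde P_1\Vert_F$, $C_0$, and the quantitative admissible error $b_{\bar i}$) and a stability-preservation lemma of independent interest, at the price of the heavier compactness-plus-Gronwall machinery. Two cosmetic points if you were to write yours out: the tubes must be placed around both the gains $K_i$ and the matrices $P_i$ (your text conflates the two), and invertibility of $[\hat G_i]_{uu}$ follows uniformly from $[\tilde G_i]_{uu}=R+B^T\tilde P_iB\ge R>0$ once $\Vert\Delta G_i\Vert_F$ is below a fixed fraction of $\lambda_{\min}(R)$, so it need not be folded into the per-step tube radii.
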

The next two lemmas, inspired by \cite[Lemma 5.1]{pmlr-v89-abbasi-yadkori19a}, state that under certain conditions on $\Vert \Delta G_i\Vert_F$, each element in $\{\hat{K}_i\}_{i=1}^{\bar{i}}$ is stabilizing, each element in $\{[\hat{G}_i]_{uu}\}_{i=1}^{\bar{i}}$ is invertible and $\{\tilde{P}_i\}_{i=1}^{\bar{i}}$ is bounded. For simplicity, in the following we assume $S>I_n$ and $R>I_m$. All the proofs still work for any $S>0$ and $R>0$, by suitable rescaling.
\begin{lemma}\label{lemma_stability}
If $\hat{K}_i$ is stabilizing, then $[\hat{G}_i]_{uu}$ is nonsingular and $\hat{K}_{i+1}$ is stabilizing, as long as $\Vert \Delta G_i\Vert_F < a_i$, where
\begin{equation*}
     a_i=\left(m(\sqrt{n} + \Vert \hat{K}_{i}\Vert_2)^2+m(\sqrt{n} + \Vert \hat{K}_{i+1}\Vert_2)^2\right)^{-1}.
\end{equation*}
Furthermore,
\begin{equation}
    \Vert\hat{K}_{i+1}\Vert_F \leq 2\Vert R^{-1}\Vert_F(1 + \Vert B^T\tilde{P}_iA\Vert_F).  \label{K_bound_by_P}
\end{equation}
\end{lemma}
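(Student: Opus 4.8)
The plan is to use the true cost matrix $\tilde P_i$ of the current stabilizing gain $\hat K_i$ as a common Lyapunov certificate for the updated gain $\hat K_{i+1}$, and to exploit the linearity of $\mathcal H(\cdot,K)$ in its first argument to pull the disturbance $\Delta G_i$ out of the Lyapunov residual. First I would record the structural facts forced by $\hat K_i$ being stabilizing together with $S\succ I_n$, $R\succ I_m$: equation \eqref{eRPI_PE} has a unique solution $\tilde P_i\succ 0$, and $[\tilde G_i]_{uu}=\mathscr R(\tilde P_i)=R+B^T\tilde P_iB\succeq R\succ I_m$. Since $[\hat G_i]_{uu}=[\tilde G_i]_{uu}+[\Delta G_i]_{uu}$ and $\Vert[\Delta G_i]_{uu}\Vert_2\le\Vert\Delta G_i\Vert_F$, Weyl's inequality gives $\lambda_{\min}([\hat G_i]_{uu})>1-\Vert\Delta G_i\Vert_F>0$ whenever $\Vert\Delta G_i\Vert_F<1$; as $a_i\le\tfrac12$, the hypothesis $\Vert\Delta G_i\Vert_F<a_i$ then yields positive definiteness (in particular nonsingularity) of $[\hat G_i]_{uu}$ and makes $\hat K_{i+1}$ well defined.

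The core step is stability. I would write the Lyapunov residual of $\hat K_{i+1}$ against $\tilde P_i$ as
$$\mathcal R:=(A-B\hat K_{i+1})^T\tilde P_i(A-B\hat K_{i+1})-\tilde P_i=\mathcal H(\tilde G_i,\hat K_{i+1})-S-\hat K_{i+1}^TR\hat K_{i+1}.$$
Completing the square in $K$ and using $[\hat G_i]_{uu}\succ0$ shows $K\mapsto\mathcal H(\hat G_i,K)$ is minimized in the Loewner order at $\hat K_{i+1}=[\hat G_i]_{uu}^{-1}[\hat G_i]_{ux}$, so $\mathcal H(\hat G_i,\hat K_{i+1})\preceq\mathcal H(\hat G_i,\hat K_i)$. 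Linearity of $\mathcal H$ in its first slot together with $\mathcal H(\tilde G_i,\hat K_i)=0$ gives $\mathcal H(\hat G_i,\hat K_i)=\mathcal H(\Delta G_i,\hat K_i)$ and $\mathcal H(\tilde G_i,\hat K_{i+1})=\mathcal H(\hat G_i,\hat K_{i+1})-\mathcal H(\Delta G_i,\hat K_{i+1})$, whence, after discarding $\hat K_{i+1}^TR\hat K_{i+1}\succeq0$,
$$\mathcal R\preceq\mathcal H(\Delta G_i,\hat K_i)-\mathcal H(\Delta G_i,\hat K_{i+1})-S.$$
I would then bound each disturbance term by $\Vert\mathcal H(\Delta G_i,K)\Vert_2\le\Vert[I_n;-K]\Vert_F^2\,\Vert\Delta G_i\Vert_F\le m(\sqrt n+\Vert K\Vert_2)^2\Vert\Delta G_i\Vert_F$ for $K\in\{\hat K_i,\hat K_{i+1}\}$, using $\Vert[I_n;-K]\Vert_F^2=n+\Vert K\Vert_F^2\le m(n+\Vert K\Vert_2^2)$. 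Their sum is exactly $a_i^{-1}\Vert\Delta G_i\Vert_F<1$, so the symmetric matrix $M:=\mathcal H(\Delta G_i,\hat K_i)-\mathcal H(\Delta G_i,\hat K_{i+1})$ obeys $M\prec I_n$, giving $\mathcal R\preceq M-S\prec I_n-S\prec0$ because $S\succ I_n$. With $\tilde P_i\succ0$, the discrete-time Lyapunov theorem forces $\rho(A-B\hat K_{i+1})<1$, i.e. $\hat K_{i+1}$ is stabilizing.

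For the norm bound \eqref{K_bound_by_P}, I would note $[\hat G_i]_{uu}\succeq R-\Vert\Delta G_i\Vert_F I_m\succeq\tfrac12 R$ (using $\Vert\Delta G_i\Vert_F<a_i\le\tfrac12$ and $R\succ I_m$), hence $[\hat G_i]_{uu}^{-1}\preceq2R^{-1}$ and therefore $\Vert[\hat G_i]_{uu}^{-1}\Vert_F\le2\Vert R^{-1}\Vert_F$ by monotonicity of the Frobenius norm on the positive semidefinite cone; combined with $\Vert[\hat G_i]_{ux}\Vert_F\le\Vert B^T\tilde P_iA\Vert_F+\Vert\Delta G_i\Vert_F\le1+\Vert B^T\tilde P_iA\Vert_F$, submultiplicativity gives the claim. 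The main obstacle is the middle step: pinning down the sign of $\mathcal R$ requires simultaneously the one-sided optimality $\mathcal H(\hat G_i,\hat K_{i+1})\preceq\mathcal H(\hat G_i,\hat K_i)$, the transfer of the perturbation through linearity of $\mathcal H$, and a calibration of $a_i$ so that the two $\Delta G_i$ contributions are jointly dominated by $S\succ I_n$. The deliberately loose factor $m(\sqrt n+\Vert\cdot\Vert_2)^2$ is exactly what makes this last accounting close cleanly.
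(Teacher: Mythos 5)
Your proof is correct and follows essentially the same route as the paper's: the same three-step chain $\mathcal H(\tilde G_i,\hat K_{i+1})\preceq \mathcal H(\hat G_i,\hat K_{i+1})+\cdots\preceq \mathcal H(\hat G_i,\hat K_i)+\cdots\preceq \mathcal H(\tilde G_i,\hat K_i)+\cdots=0+\cdots$ via the Loewner-optimality of $\hat K_{i+1}$ for $\hat G_i$, the same constant $m(\sqrt n+\Vert K\Vert_2)^2$ per perturbation term, the same use of $S\succ I_n$ to absorb the total error, and the same Lyapunov certificate $\tilde P_i$. The only difference is presentational — you argue directly in the Loewner order with operator-norm bounds, while the paper tests against unit vectors and bounds traces of $\vert\mathcal X_K\vert_{abs}$ entrywise, and your Weyl-inequality argument for nonsingularity replaces the paper's Neumann-series bound; both yield identical conclusions.
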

\begin{proof}
By definition,
$$\Vert [\tilde{G}_i]_{uu}^{-1}([\hat{G}_i]_{uu}-[\tilde{G}_i]_{uu})\Vert_F < a_i\Vert [\tilde{G}_i]_{uu}^{-1}\Vert_F.$$
Since $R>I_m$, the eigenvalues $\lambda_j([\tilde{G}_i]_{uu}^{-1})\in(0,1]$ for all $1\leq j\leq m$. Then by the fact that for any $X\in\mathbb{S}^m$
$$\Vert X\Vert_F = \Vert \Lambda_X\Vert_F,\quad \Lambda_X = \mathrm{diag}\{\lambda_1(X),\cdots,\lambda_m(X)\},$$
we have 
\begin{equation}\label{R_inverse_pertrubed_bound}
    \Vert [\tilde{G}_i]_{uu}^{-1}([\hat{G}_i]_{uu}-[\tilde{G}_i]_{uu})\Vert_F< a_i\sqrt{m} < 0.5.
\end{equation}
Thus by \cite[Section 5.8]{horn2012matrix}, $[\hat{G}_i]_{uu}$ is invertible.

For any $x\in\mathbb{R}^{n}$ on the unit ball, define
$$\mathcal{X}_{\hat{K}_i} = \left[\begin{array}{c}
         I  \\
         -\hat{K}_i
    \end{array}\right]xx^T \left[\begin{array}{cc}
        I &  -\hat{K}_i^T
    \end{array}\right].$$
From \eqref{eRPI_PE} and \eqref{eRPI_PI} we have
$$x^T\mathcal{H}(\tilde{G}_i,\hat{K}_i)x = \trace(\tilde{G}_i\mathcal{X}_{\hat{K}_i}) = 0,$$
and
$$\trace(\hat{G}_i\mathcal{X}_{\hat{K}_{i+1}}) = \min_{K\in\mathbb{R}^{m\times n}} \trace(\hat{G}_i\mathcal{X}_K).$$
Then 
\begin{align}
    &\mathrm{tr}(\tilde{G}_i\mathcal{X}_{\hat{K}_{i+1}}) \leq \mathrm{tr}(\hat{G}_i\mathcal{X}_{\hat{K}_{i+1}}) + \Vert \Delta G_i\Vert_F \mathrm{tr}(\mathbf{1}\mathbf{1}^T\vert \mathcal{X}_{\hat{K}_{i+1}}\vert_{abs}) \nonumber\\   
    &\leq \mathrm{tr}(\hat{G}_i\mathcal{X}_{\hat{K}_{i}}) + \Vert \Delta G_i\Vert_F \mathbf{1}^T\vert \mathcal{X}_{\hat{K}_{i+1}}\vert_{abs}\mathbf{1}\nonumber \\
    &\leq \mathrm{tr}(\tilde{G}_i\mathcal{X}_{\hat{K}_{i}}) + \Vert \Delta G_i\Vert_F \mathbf{1}^T(\vert \mathcal{X}_{\hat{K}_{i}}\vert_{abs}+\vert \mathcal{X}_{\hat{K}_{i+1}}\vert_{abs})\mathbf{1} \nonumber\\
    &\leq \Vert \Delta G_i\Vert_F \mathbf{1}^T(\vert \mathcal{X}_{\hat{K}_{i}}\vert_{abs}+\vert \mathcal{X}_{\hat{K}_{i+1}}\vert_{abs})\mathbf{1}, \label{key_inequality_1}
\end{align}
where $\vert \mathcal{X}_{\hat{K}_{i}}\vert_{abs}$ denotes the matrix obtained from $\mathcal{X}_{\hat{K}_{i}}$ by taking the absolute value of each entry. Thus by (\ref{key_inequality_1}) and the definition of $\tilde{G}_i$, we have
\begin{equation}\label{key_inequality_2}
    x^T\mathcal{L}_{\mathcal{A}(\hat{K}_{i+1})}(\tilde{P}_i)x + \epsilon_1 \leq 0
\end{equation}
where 
\begin{equation*}
\begin{split}
    \epsilon_1 = x^T(S+\hat{K}_{i+1}^TR\hat{K}_{i+1})x - \Vert \Delta G_i\Vert_F \mathbf{1}^T(\vert\mathcal{X}_{\hat{K}_{i}}\vert_{abs}+\vert \mathcal{X}_{\hat{K}_{i+1}}\vert_{abs})\mathbf{1}.    
\end{split}
\end{equation*}
For any $x$ on the unit ball, $\vert\mathbf{1}^Tx\vert_{abs}\leq \sqrt{n}$. Similarly, for any $K\in\mathbb{R}^{m\times n}$, by the definition of induced matrix norm, $\vert\mathbf{1}^TKx\vert_{abs}\leq\Vert K\Vert_2 \sqrt{m}$. This implies
\begin{equation*}
    \left\vert\mathbf{1}^T\left[\begin{array}{c}
    I \\
    -K
    \end{array}\right]x\right\vert_{abs} = \left\vert\mathbf{1}^Tx - \mathbf{1}^TKx\right\vert_{abs} \leq \sqrt{m}(\sqrt{n} + \Vert K\Vert_2),
\end{equation*}
which means $\mathbf{1}^T\vert \mathcal{X}_K\vert_{abs}\mathbf{1}\leq m(\sqrt{n} + \Vert K\Vert_2)^2$. Thus
$$\Vert \Delta G_i\Vert_F \mathbf{1}^T(\vert \mathcal{X}_{\hat{K}_{i}}\vert_{abs}+\vert \mathcal{X}_{\hat{K}_{i+1}}\vert_{abs})\mathbf{1}<1.$$
Then $S>I_n$ leads to
$$x^T\left(\mathcal{A}(\hat{K}_{i+1})^T\tilde{P}_i\mathcal{A}(\hat{K}_{i+1})-\tilde{P}_i\right)x<0$$
for all $x$ on the unit ball. So $\hat{K}_{i+1}$ is stabilizing by the Lyapunov criterion \cite[Lemma $12.2'$]{wonham1974linear}.

By definition, 
\begin{align}
        \Vert\hat{K}_{i+1}\Vert_F
        &\leq \Vert [\hat{G}_i]_{uu}^{-1}\Vert_F(1 + \Vert B^T\tilde{P}_iA\Vert_F)\nonumber \\
        &\leq \Vert [\tilde{G}_i]_{uu}^{-1}\Vert_F(1-\Vert [\tilde{G}_i]_{uu}^{-1}([\hat{G}_i]_{uu}-[\tilde{G}_i]_{uu})\Vert_F)^{-1}\nonumber(1 + \Vert B^T\tilde{P}_iA\Vert_F) \nonumber\\
        &\leq 2\Vert R^{-1}\Vert_F(1 + \Vert B^T\tilde{P}_iA\Vert_F).
\end{align}
where the second inequality comes from \cite[Inequality (5.8.2)]{horn2012matrix}, and the last inequality is due to \eqref{R_inverse_pertrubed_bound}. This completes the proof.
\end{proof}
\begin{lemma}\label{lemma_boundedness}
For any $\bar{i}\in\mathbb{Z}_+$, $\bar{i}>0$, if
\begin{equation}\label{error_condition_bounded}
   \Vert \Delta G_i\Vert_F< (1+i^2)^{-1}a_i,\quad i=1,\cdots, \bar{i},
\end{equation}
where $a_i$ is defined in Lemma \ref{lemma_stability}, then
\begin{equation*}
    \Vert\tilde{P}_i\Vert_F\leq 6\Vert \tilde{P}_1\Vert_F,\quad \Vert\hat{K}_{i}\Vert_F\leq C_0,
\end{equation*}
for $i=1,\cdots,\bar{i}$, where
$$ C_0 = \max\left\{ \Vert\hat{K}_1 \Vert_F, 2\Vert R^{-1}\Vert_F\left(1+6\Vert B^T\Vert_F\Vert \tilde{P}_1\Vert_F\Vert A\Vert_F\right)\right\}.$$
\end{lemma}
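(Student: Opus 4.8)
The plan is to prove both bounds by an induction on $i$ that propagates stabilizability and then reads off the norm estimates. Since \eqref{error_condition_bounded} gives $\Vert\Delta G_i\Vert_F<(1+i^2)^{-1}a_i\le a_i$ for every $i\le\bar i$, Lemma \ref{lemma_stability} applies at each step: starting from the stabilizing $\hat K_1$, it yields that $[\hat G_i]_{uu}$ is nonsingular and $\hat K_{i+1}$ is stabilizing for $i=1,\dots,\bar i-1$, so all operators $\mathcal{A}(\hat K_{i+1})=A-B\hat K_{i+1}$ appearing below are stable and the associated Lyapunov sums converge. The real content is a one-step \emph{multiplicative} bound $\tilde P_{i+1}\le(1+i^{-2})\tilde P_i$ in the positive semidefinite order, from which everything follows.

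To get that bound I would first upgrade the scalar inequality \eqref{key_inequality_2} to a matrix inequality. Collecting the correction term and using $\mathbf 1^T(\vert\mathcal{X}_{\hat K_i}\vert_{abs}+\vert\mathcal{X}_{\hat K_{i+1}}\vert_{abs})\mathbf 1\le a_i^{-1}$ uniformly over unit vectors $x$ (so the constant equals $(\Vert\Delta G_i\Vert_F/a_i)\,x^Tx$), homogeneity gives
$$\mathcal{L}_{\mathcal{A}(\hat K_{i+1})}(\tilde P_i)\le -(S+\hat K_{i+1}^TR\hat K_{i+1})+\frac{\Vert\Delta G_i\Vert_F}{a_i}I_n.$$
Subtracting the exact evaluation identity $\mathcal{L}_{\mathcal{A}(\hat K_{i+1})}(\tilde P_{i+1})=-(S+\hat K_{i+1}^TR\hat K_{i+1})$, which is just \eqref{eRPI_PE} at index $i+1$, and using linearity yields $\mathcal{L}_{\mathcal{A}(\hat K_{i+1})}(\tilde P_{i+1}-\tilde P_i)\ge -\frac{\Vert\Delta G_i\Vert_F}{a_i}I_n$. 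Lemma \ref{Lyapunov_properties}\ref{Lyapunov_solution} then represents $\tilde P_{i+1}-\tilde P_i=\sum_{k\ge0}(\mathcal{A}(\hat K_{i+1})^T)^kZ_i\,\mathcal{A}(\hat K_{i+1})^k$ with $0\le Z_i\le\frac{\Vert\Delta G_i\Vert_F}{a_i}I_n$, so $\tilde P_{i+1}-\tilde P_i\le\frac{\Vert\Delta G_i\Vert_F}{a_i}\sum_{k\ge0}(\mathcal{A}(\hat K_{i+1})^T)^k\mathcal{A}(\hat K_{i+1})^k$.

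The key trick is to absorb the (a priori unbounded) Gramian back into $\tilde P_{i+1}$: because $S>I_n$ forces $S+\hat K_{i+1}^TR\hat K_{i+1}\ge I_n$, Lemma \ref{Lyapunov_properties}\ref{Lyapunov_solution} also gives $\tilde P_{i+1}\ge\sum_{k\ge0}(\mathcal{A}(\hat K_{i+1})^T)^k\mathcal{A}(\hat K_{i+1})^k$, hence $\tilde P_{i+1}-\tilde P_i\le\frac{\Vert\Delta G_i\Vert_F}{a_i}\tilde P_{i+1}$. Rearranging and using $\Vert\Delta G_i\Vert_F/a_i<(1+i^2)^{-1}$ gives $\tilde P_{i+1}\le\big(1-\Vert\Delta G_i\Vert_F/a_i\big)^{-1}\tilde P_i\le(1+i^{-2})\tilde P_i$. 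Telescoping over $j=1,\dots,i-1$ produces $0\le\tilde P_i\le\big(\prod_{j\ge1}(1+j^{-2})\big)\tilde P_1\le e^{\pi^2/6}\tilde P_1$, and since $e^{\pi^2/6}<6$, the Frobenius bound follows from monotonicity of $\Vert\cdot\Vert_F$ on the PSD cone: for $0\le X\le Y$ one has $\Vert Y\Vert_F^2-\Vert X\Vert_F^2=\mathrm{tr}\big((Y-X)(Y+X)\big)\ge0$, as the trace of a product of two PSD matrices. Thus $\Vert\tilde P_i\Vert_F\le6\Vert\tilde P_1\Vert_F$.

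Finally the gain bound comes from feeding this into \eqref{K_bound_by_P}: $\Vert\hat K_{i+1}\Vert_F\le2\Vert R^{-1}\Vert_F(1+\Vert B^T\Vert_F\Vert\tilde P_i\Vert_F\Vert A\Vert_F)\le2\Vert R^{-1}\Vert_F(1+6\Vert B^T\Vert_F\Vert\tilde P_1\Vert_F\Vert A\Vert_F)$, while $i=1$ contributes only $\Vert\hat K_1\Vert_F$, so $\Vert\hat K_i\Vert_F\le C_0$. The generic $S>0$, $R>0$ case reduces to $S>I_n$, $R>I_m$ by rescaling, as already noted. I expect the Gramian-absorption step to be the crux: the insight that $S>I_n$ lets one replace the uncontrolled $\sum_k(\mathcal{A}^T)^k\mathcal{A}^k$ by $\tilde P_{i+1}$ is exactly what converts an \emph{additive} error accumulation (which would blow up) into a \emph{multiplicative} product $\prod(1+j^{-2})$ that converges; the PSD-monotonicity of $\Vert\cdot\Vert_F$ is a second, smaller point that must be verified rather than assumed.
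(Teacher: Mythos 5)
Your proof is correct and follows essentially the same route as the paper: both derive the one-step multiplicative bound $\tilde{P}_{i+1}\leq(1+i^{-2})\tilde{P}_i$ by absorbing the Lyapunov Gramian $\mathcal{L}^{-1}_{\mathcal{A}(\hat{K}_{i+1})}(-I)$ back into the cost matrix via $S>I_n$, then telescope the convergent infinite product and feed the result into \eqref{K_bound_by_P}. The only (harmless) cosmetic differences are that you bound the Gramian by $\tilde{P}_{i+1}$ through the exact Lyapunov equation rather than by $\tilde{P}_i/(1-\epsilon_{2,i})$ through the perturbed inequality, use $e^{\pi^2/6}<6$ in place of the cited product theorem, and explicitly verify the monotonicity of $\Vert\cdot\Vert_F$ on the PSD cone, which the paper leaves implicit.
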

\begin{proof}
Inequality (\ref{key_inequality_2}) yields
\begin{equation}\label{key_inequality_3}
    \mathcal{L}_{\mathcal{A}(\hat{K}_{i+1})}(\tilde{P}_i) + (S+\hat{K}_{i+1}^TR\hat{K}_{i+1}) - \epsilon_{2,i}I < 0.
\end{equation}
where
$$\epsilon_{2,i} = \Vert \Delta G_i\Vert_F \mathbf{1}^T(\vert \mathcal{X}_{\hat{K}_{i}}\vert_{abs}+\vert \mathcal{X}_{\hat{K}_{i+1}}\vert_{abs})\mathbf{1}<1.$$
Inserting (\ref{eRPI_PE}) into above inequality, and using \ref{Lyapunov_monotone} in Lemma \ref{Lyapunov_properties}, we have
\begin{equation}\label{key_inequality_4_1}
    \tilde{P}_{i+1} < \tilde{P}_{i} + \epsilon_{2,i}\mathcal{L}_{\mathcal{A}(\hat{K}_{i+1})}^{-1}(-I).
\end{equation}
With $S>I_n$, (\ref{key_inequality_3}) yields
\begin{equation*}
    \mathcal{L}_{\mathcal{A}(\hat{K}_{i+1})}(\tilde{P}_i) + (1 - \epsilon_{2,i})I < 0.
\end{equation*}
Similar to (\ref{key_inequality_4_1}), we have
\begin{equation}\label{key_inequality_4_2}
    \mathcal{L}_{\mathcal{A}(\hat{K}_{i+1})}^{-1}(-I)<\frac{1}{1-\epsilon_{2,i}}\tilde{P}_i.
\end{equation}
From (\ref{key_inequality_4_1}) and (\ref{key_inequality_4_2}), we obtain
\begin{equation*}
    \tilde{P}_{i+1}<\left(1+\frac{\epsilon_{2,i}}{1-\epsilon_{2,i}}\right)\tilde{P}_i.
\end{equation*}
By definition of $\epsilon_{2,i}$ and condition \eqref{error_condition_bounded},
$$\frac{\epsilon_{2,i}}{1-\epsilon_{2,i}} \leq \frac{1}{i^2},\quad i=1,\cdots,\bar{i}.$$
Then \cite[\S 28. Theorem 3]{InfinitSeries} yields
$$\tilde{P}_i\leq 6\tilde{P}_1,\quad i=1,\cdots,\bar{i}.$$
An application of \eqref{K_bound_by_P} completes the proof.
\end{proof}
Now we are ready to prove Lemma \ref{lemma_converge_to_neighbourhood}.
\begin{proof}[Proof of Lemma \ref{lemma_converge_to_neighbourhood}]
Consider Procedure \ref{procedure_robust_policy_iteration} confined to the first $\bar{i}$ iterations, where $\bar{i}$ is a sufficiently large integer to be determined later in this proof. Suppose
\begin{equation}\label{error_bound_2}
        \Vert\Delta G_i\Vert_F<b_{\bar{i}}\triangleq\frac{1}{2m(1+\bar{i}^2)}\left(\sqrt{n} + C_0\right)^{-2}.
\end{equation}
Condition (\ref{error_bound_2}) implies condition (\ref{error_condition_bounded}). Thus $\hat{K}_i$ is stabilizing, $[\hat{G}_i]_{uu}^{-1}$ is invertible, $\Vert\tilde{P}_i\Vert_F$ and $\Vert \hat{K}_i\Vert_F$ are bounded. By (\ref{eRPI_PE}) we have
\begin{equation*}
\begin{split}
\mathcal{L}_{\mathcal{A}(\hat{K}_{i+1})}(\tilde{P}_{i+1}-\tilde{P}_{i})&= -S-\hat{K}_{i+1}^TR\hat{K}_{i+1}-\mathcal{L}_{\mathcal{A}(\hat{K}_{i+1})}(\tilde{P}_{i}).
\end{split}
\end{equation*}
Letting $E_i = \hat{K}_{i+1} - \mathscr{K}(\tilde{P}_i)$, the above equation can be rewritten as
\begin{equation}\label{perturbed_newton}
    \tilde{P}_{i+1} = \tilde{P}_i - \mathcal{N}(\tilde{P_i}) + \mathcal{L}_{\mathcal{A}(\mathscr{K}(\tilde{P_i}))}^{-1}(\mathscr{E}_i),
\end{equation}
where $\mathcal{N}(\tilde{P_i}) = \mathcal{L}_{\mathcal{A}(\mathscr{K}(\tilde{P_i}))}^{-1}\circ\mathcal{R}(\tilde{P}_i),$ and 
\begin{equation*}
    \begin{split}
    \mathcal{R}(Y) &= A^TYA-Y-A^TYB(R+B^TYB)^{-1}B^TYA+S,  \\
    \mathscr{E}_i &= - E_i^T\mathscr{R}(\tilde{P}_{i+1})E_i + E_i^T\mathscr{R}(\tilde{P}_{i+1})\left(\mathscr{K}(\tilde{P}_{i+1})-\mathscr{K}(\tilde{P}_i)\right)  \\
    &+\left(\mathscr{K}(\tilde{P}_{i+1})-\mathscr{K}(\tilde{P}_i)\right)^T\mathscr{R}(\tilde{P}_{i+1})E_i.
    \end{split}
\end{equation*}
Given $\hat{K}_1$, let $\mathcal{M}_{\bar{i}}$ denote the set of all possible $\tilde{P}_i$, generated by (\ref{perturbed_newton}) under condition (\ref{error_bound_2}). By definition, $\{\mathcal{M}_j\}_{j=1}^\infty$ is a nondecreasing sequence of sets, i.e., $\mathcal{M}_1\subset \mathcal{M}_2 \subset \cdots$. Define $\mathcal{M} = \cup_{j=1}^\infty\mathcal{M}_j$, $\mathcal{D} = \{P\in \mathbb{S}^n\ \vert\ \Vert P\Vert_F\leq 6\Vert \tilde{P}_1\Vert_F\}$. Then by Lemma \ref{lemma_boundedness} and Theorem \ref{theorem_standard_PI}, $\mathcal{M}\subset\mathcal{D}$; $\mathcal{M}$ is compact; $\mathcal{A}(\mathscr{K}(P))$ is stable for any $P\in\mathcal{M}$.\par
Now we prove that $\mathcal{N}(\cdot)$ is Lipschitz continuous on $\mathcal{M}$. Using \eqref{equivalent_operator_matrix}, for any $P^1,P^2\in\mathcal{M}$ we have
\begin{align}\label{Newton_Lipschitz}
        \Vert\mathcal{N}(P^1)-\mathcal{N}(P^2)\Vert_F&= \Vert\mathscr{A}^{-1}(\mathcal{A}(\mathscr{K}(P^1)))\vect(\mathcal{R}(P^1)) -\mathscr{A}^{-1}(\mathcal{A}(\mathscr{K}(P^2)))\vect(\mathcal{R}(P^2))\Vert_2 \nonumber\\
        &\leq\Vert\mathscr{A}^{-1}(\mathcal{A}(\mathscr{K}(P^1)))\Vert_2\Vert\mathcal{R}(P^1) -\mathcal{R}(P^2)\Vert_F +\nonumber\\
        &\Vert\mathcal{R}(P^2)\Vert_F\Vert\mathscr{A}^{-1}(\mathcal{A}(\mathscr{K}(P^1))) -\mathscr{A}^{-1}(\mathcal{A}(\mathscr{K}(P^2)))\Vert_2\nonumber\\
        &\leq L \Vert P^1 -P^2 \Vert_F
\end{align}
with some Lipschitz constant $L>0$, where the last inequality is due to the fact that matrix inversion, $\mathcal{A}(\cdot)$, $\mathscr{K}(\cdot)$ and $\mathcal{R}(\cdot)$ are locally Lipschitz, thus Lipschitz on compact set $\mathcal{M}$.

Define $\{P_{k\vert i}\}_{k=0}^{\infty}$ as the sequence generated by \eqref{Kleinman_one_step} with $P_{0\vert i}=\tilde{P}_i$.
Similar to \eqref{perturbed_newton}, we have
\begin{equation}\label{exact_newton}
    P_{k+1\vert i} = P_{k\vert i} - \mathcal{N}(P_{k\vert i}), \quad k\in\mathbb{Z}_+.
\end{equation}
By Theorem \ref{theorem_standard_PI} and the fact that $\mathcal{M}$ is compact, there exists $k_0\in\mathbb{Z}_+$, such that 
\begin{equation}\label{triangle_inequality_first_part}
\Vert P_{k_0\vert i}-P^*\Vert_F<\delta_0/2, \qquad \forall P_{0\vert i}\in\mathcal{M}.
\end{equation}
Suppose
\begin{equation}\label{perturb_small}
    \Vert\mathcal{L}_{\mathcal{A}(\mathscr{K}(\tilde{P}_{i+j}))}^{-1}(\mathscr{E}_{i+j})\Vert_F<\mu,\qquad j=0,\cdots, \bar{i} - i.
\end{equation}
We find an upper bound on $\Vert P_{k\vert i}-\tilde{P}_{i+k}\Vert_F$. Notice that from (\ref{perturbed_newton}) and (\ref{exact_newton}),
\begin{equation*}
    \begin{split}
        P_{k\vert i} = P_{0\vert i} - \sum_{j=0}^{k-1} \mathcal{N}(P_{j\vert i}), \qquad \tilde{P}_{i+k} = \tilde{P}_i - \sum_{j=0}^{k-1} \mathcal{N}(\tilde{P}_{i+j}) + \sum_{j=0}^{k-1} \mathcal{L}_{\mathcal{A}(\mathscr{K}(\tilde{P}_{i+j}))}^{-1}(\mathscr{E}_{i+j}).
    \end{split}
\end{equation*}
Then (\ref{Newton_Lipschitz}) and (\ref{perturb_small}) yield 
\begin{equation*}
    \begin{split}
        \Vert P_{k\vert i} - \tilde{P}_{i+k}\Vert_F \leq k\mu + \sum_{j=0}^{k-1} L \Vert P_{j\vert i} - \tilde{P}_{i+j}\Vert_F.
    \end{split}
\end{equation*}
An application of the Gronwall inequality \cite[Theorem 4.1.1.]{agarwal2000difference} to the above inequality implies
\begin{equation}\label{continuous_dependence}
    \Vert P_{k\vert i} - \tilde{P}_{i+k}\Vert_F \leq k\mu + L\mu \sum_{j=0}^{k-1}j (1+L)^{k-j-1}.
\end{equation}
By \ref{Lyapunov_solution} in Lemma \ref{Lyapunov_properties}, 
$$\mathcal{L}^{-1}_{\mathcal{A}(\mathscr{K}(P^1))}(-I) = \sum_{k=0}^\infty (\mathcal{A}^T(\mathscr{K}(P^1)))^k (\mathcal{A}(\mathscr{K}(P^1)))^k,\quad P^1\in\mathcal{M}.$$
By definition of the set $\mathcal{M}$, there exist $G^1$ and stabilizing control gain $K^1$ associated with $P^1\in\mathcal{M}$, such that $\mathcal{H}(G^1,K^1)=0$. So for any $x\in\mathbb{R}^n$,
\begin{equation*}
    x^T\mathcal{R}(P^1)x = \min_{K\in\mathbb{R}^{m\times n}} x^T\mathcal{H}(G^1,K)x \leq 0.
\end{equation*}
This implies
$$\mathcal{L}_{\mathcal{A}(\mathscr{K}(P^1))}(P^1)\leq -S - \mathscr{K}^T(P^1)R\mathscr{K}(P^1)<-I.$$
An application of \ref{Lyapunov_monotone} in Lemma \ref{Lyapunov_properties} to the above inequality leads to
\begin{equation}\label{bound_on_H}
    \Vert \mathcal{L}^{-1}_{\mathcal{A}(\mathscr{K}(P^1))}(-I)\Vert_F<P^1 \leq 6\Vert \tilde{P}_1\Vert_F, \qquad \forall P^1\in\mathcal{M}\subset\mathcal{D}.
\end{equation}
Then the error term in (\ref{perturbed_newton}) satisfies
\begin{align}\label{perturbed_inverse_lypunov}
    \left\Vert \mathcal{L}_{\mathcal{A}(\mathscr{K}(\tilde{P_i}))}^{-1}(\mathscr{E}_i)\right\Vert_F 
    = \left\Vert \sum_{k=0}^\infty (\mathcal{A}^T(\mathscr{K}(\tilde{P_i})))^k\otimes (\mathcal{A}(\mathscr{K}(\tilde{P_i})))^k \vect\left( \mathscr{E}_i\right)\right\Vert_2 \leq C_1\Vert \mathscr{E}_i\Vert_F,
\end{align}
where $C_1$ is a constant and the inequality is due to (\ref{bound_on_H}).

Let $\bar{i}>k_0$, and $k=k_0$, $i = \bar{i}-k_0$ in \eqref{continuous_dependence}. Then by condition \eqref{error_bound_2}, Lemma \ref{lemma_boundedness}, (\ref{perturb_small}), (\ref{continuous_dependence}), and \eqref{perturbed_inverse_lypunov}, there exists $i_0\in\mathbb{Z}_+$, $i_0>k_0$, such that $\Vert P_{k_0\vert \bar{i}-k_0} -\tilde{P}_{\bar{i}}\Vert_F<\delta_0/2$, for all $\bar{i}\geq i_0$. Setting $i = \bar{i}-k_0$ in \eqref{triangle_inequality_first_part}, the triangle inequality yields $\tilde{P}_{\bar{i}}\in\mathcal{B}_{\delta_0}(P^*)$, for $\bar{i}\geq i_0$. Then in \eqref{error_bound_2}, choosing $\bar{i}\geq i_0$ such that $\delta_2 = b_{\bar{i}}<\min(\gamma^{-1}(\epsilon),\delta_1)$ completes the proof.
\end{proof}

\section{Appendix E: Proof of Theorem \ref{theorem_stochastic_algorithm_convergence}}\label{proof_algorithm_convergence}
For given $\hat{K}_1$, let $\mathcal{F}$ denote the set of control gains (including $\hat{K}_1$) generated by Procedure \ref{procedure_robust_policy_iteration} with all possible $\{\Delta G_i\}_{i=1}^\infty$ satisfying $\Vert \Delta G\Vert_\infty<\delta_2$, where $\delta_2$ is the one in Theorem \ref{theorem_global}. The following result is firstly derived. 
\begin{lemma}\label{Lemma_policy_evaluation_error_bounded}
Under Assumption \ref{assumption_PE}, there exist $T_0\in\mathbb{Z}_+$ and $M_0\in\mathbb{Z}_+$, such that for any $T\geq T_0$ and any $M\geq M_0$, $\hat{K}_i\in\mathcal{F}$ implies $\Vert\Delta G_i\Vert_F< \delta_2$, almost surely.
\end{lemma}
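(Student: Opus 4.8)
The plan is to bound, uniformly over all $\hat K_i\in\mathcal F$, each of the three terms in the error decomposition \eqref{error_decomposition} by $\delta_2/3$, so that their sum stays below $\delta_2$. The entire randomness enters only through the data matrices $\Phi_M,\Psi_M,\Xi_M$, which --- crucially, since O-LSPI is off-policy --- are built once from the behaviour-policy data and reused for every $i$; hence \eqref{ergodic_converge} is a single almost-sure event, and on it $\Phi_M^\dagger\to\varphi_1^{-1}$ (using that $\varphi_1$ is invertible by Assumption \ref{assumption_PE}), $\Phi_M^\dagger\Psi_M\to\varphi_1^{-1}\varphi_2$, and $\Phi_M^\dagger\Xi_M\to\varphi_1^{-1}\varphi_3$. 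Working on this event, the remaining analysis is deterministic, and the only genuine difficulty is to make the thresholds $T_0,M_0$ independent of $i$.

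First I would set up the compactness needed for uniformity. By Theorem \ref{theorem_global}, every $\hat K_i\in\mathcal F$ is stabilizing with $\Vert\hat K_i\Vert_F$ uniformly bounded and with the true cost $\tilde P_i$ ranging over a bounded set; a continuity/compactness argument (as in Appendix D) places the closed-loop matrices $\{A-B\hat K_i:\hat K_i\in\mathcal F\}$ inside a compact set of stable matrices and the $\tilde P_i$ inside a compact set. Lemma \ref{uniform_exp_stab_over_compact_set} then supplies constants $a_0>0$, $0<b_0<1$ with $\Vert(A-B\hat K_i)^k\Vert_2\le a_0 b_0^k$ for all $\hat K_i\in\mathcal F$, hence a uniform geometric decay $\Vert((A-B\hat K_i)^T\otimes(A-B\hat K_i)^T)^k\Vert_2\le a_0^2 b_0^{2k}$ for the vectorized exact policy-evaluation dynamics \eqref{policy_evaluation_model_based_vector}.

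Next I would control the third term $\Vert\hat P_{i,T}-\tilde P_i\Vert_F$, which drives the other two. Writing $P_{\hat K_i,T}$ for the $T$-th iterate of the \emph{exact} recursion \eqref{policy_evaluation_model_based} started at $0$, I split $\Vert\hat P_{i,T}-\tilde P_i\Vert_F\le\Vert\hat P_{i,T}-P_{\hat K_i,T}\Vert_F+\Vert P_{\hat K_i,T}-\tilde P_i\Vert_F$. The second summand is pure convergence of the exact stable linear iteration, bounded by $a_0^2 b_0^{2T}\Vert\tilde P_i\Vert_F$ via the uniform decay above; since $\Vert\tilde P_i\Vert_F$ is uniformly bounded, a single $T_0$ makes it arbitrarily small for all $i$. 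For the first summand I observe that the inner loop of Algorithm \ref{algorithm_off_policy_stochastic} iterates the affine map $P\mapsto\mathcal H(\mathcal H(\hat F(P),0),\hat K_i)$ with $\svec(\hat F(P))=\Phi_M^\dagger\Psi_M\svec(P)+\Phi_M^\dagger\Xi_M$, which is exactly the exact map built from \eqref{exact_param_estimation} and $Q(P)=\mathcal H(F(P),0)$, perturbed by $\Phi_M^\dagger\Psi_M-\varphi_1^{-1}\varphi_2$ and $\Phi_M^\dagger\Xi_M-\varphi_1^{-1}\varphi_3$. With $T_0$ now fixed, the discrepancy of the two $T_0$-fold compositions is a continuous function of these perturbations that vanishes as $M\to\infty$, uniformly over $\hat K_i\in\mathcal F$ because the operator norms of $\mathcal H(\cdot,0)$ and $\mathcal H(\cdot,\hat K_i)$ are uniformly bounded on the compact gain set; hence a single $M_0$ makes it small, which simultaneously yields a uniform bound on $\Vert\hat P_{i,T}\Vert_F$.

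The remaining two terms are then routine. The second obeys $\Vert Q(\hat P_{i,T})-Q(\tilde P_i)\Vert_F\le c\Vert\hat P_{i,T}-\tilde P_i\Vert_F$ because $Q(\cdot)$ in \eqref{Definition_Q} is affine with an $(A,B)$-dependent Lipschitz constant, so it inherits smallness from the third term; the first equals $\Vert\mathcal H(\hat F(\hat P_{i,T})-F(\hat P_{i,T}),0)\Vert_F$, whose argument is $(\Phi_M^\dagger\Psi_M-\varphi_1^{-1}\varphi_2)\svec(\hat P_{i,T})+(\Phi_M^\dagger\Xi_M-\varphi_1^{-1}\varphi_3)$, so with $\Vert\hat P_{i,T}\Vert_F$ uniformly bounded and the perturbations vanishing, enlarging $M_0$ forces it below $\delta_2/3$. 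Choosing $T_0,M_0$ to put all three terms below $\delta_2/3$ gives $\Vert\Delta G_i\Vert_F<\delta_2$ for every $\hat K_i\in\mathcal F$, i.e. independently of $i$, on the almost-sure event \eqref{ergodic_converge}. I expect the main obstacle to be this uniformity step: ensuring that $\mathcal F$ lies in a compact, \emph{uniformly} stabilizing set of gains. The natural route is to note that $\{\mathscr K(P):P\in\mathcal M\}$, with $\mathcal M$ the compact reachable set of $\tilde P_i$ from Appendix D, is compact and stabilizing, hence uniformly stabilizing with a margin as in Lemma \ref{lemma_stability_ISS}; shrinking $\delta_2$ keeps every $\hat K_i$ within this margin, legitimising a single decay rate $b_0$ (one $T_0$) and uniformly bounded operator norms (one $M_0$), the boundary behaviour of the stabilizing region being the delicate point to watch.
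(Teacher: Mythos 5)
Your overall strategy coincides with the paper's: the same three-term decomposition \eqref{error_decomposition}, the same observation that the data matrices are built once so \eqref{ergodic_converge} is a single almost-sure event, and the same compactness setup (closure of $\mathcal{F}$ compact and uniformly stabilizing via Lemma \ref{uniform_exp_stab_over_compact_set}, boundedness of the associated Lyapunov solutions). The paper establishes stability of every gain in $\bar{\mathcal{F}}$ more directly than your sketch does --- boundedness of the set $\mathcal{V}$ of Lyapunov solutions forces stability on the closure, with no need to shrink $\delta_2$ --- but your concern about the boundary of the stabilizing region is legitimate and your fix is workable.

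The one place where you genuinely depart from the paper is the splitting of $\Vert\hat P_{i,T}-\tilde P_i\Vert_F$, and that is where there is a gap. You write $\hat P_{i,T}-\tilde P_i=(\hat P_{i,T}-P_{\hat K_i,T})+(P_{\hat K_i,T}-\tilde P_i)$, fix $T_0$ first so the exact iterate is close to $\tilde P_i$, and then choose $M_0$ so that the two \emph{$T_0$-fold} compositions are close. But the lemma must hold for \emph{all} $T\ge T_0$ with a single $M_0$. Controlling the discrepancy of the $T_0$-fold compositions by continuity in the perturbation says nothing about $T>T_0$: if the data-driven iteration matrix $\mathcal{T}^1(\Phi_M,\Psi_M,\hat K_i)$ were unstable, $\Vert\hat P_{i,T}-P_{\hat K_i,T}\Vert_F$ (and $\Vert\hat P_{i,T}\Vert_F$ itself, which you need bounded for the first term of \eqref{error_decomposition}) could grow without bound as $T$ increases. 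The missing step is to show that for $M$ large enough, uniformly over $\hat K_i\in\mathcal{F}$, the perturbed iteration matrix lies in a compact neighbourhood $\bar{\mathcal{F}}_2$ of the exact stable matrices and is therefore itself uniformly exponentially stable. This is exactly what the paper does: it then splits $\hat P_{i,T}-\tilde P_i=(\hat P_{i,T}-\hat P_i)+(\hat P_i-\tilde P_i)$, where $\hat P_i$ is the fixed point of the \emph{perturbed} iteration, bounds the first summand by the uniform geometric decay of the perturbed iteration (valid for every $T\ge T_1$), and bounds the second by the perturbation inequality \eqref{linear_perturbation} applied to the two fixed-point formulas. Once you add the stability of $\mathcal{T}^1(\Phi_M,\Psi_M,\hat K_i)$ for large $M$, your splitting can be repaired as well, but as stated the quantifier order is not respected.
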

\begin{proof}
The task is to show that each term in \eqref{error_decomposition} is less than $\delta_2/3$ almost surely.

We firstly study term $\Vert \hat{P}_{i,T}-\tilde{P}_i\Vert_F$. 
Define $\hat{p}_{i,j} = \vect(\hat{P}_{i,j})$, then by Lemma \ref{lemma_relationship_vec_svec}, lines from \ref{algorithm_policy_evaluation_begin} to \ref{algorithm_policy_teration_end} in Algorithm \ref{algorithm_off_policy_stochastic} can be rewritten as
\begin{equation}\label{policy_evaluation_approximate_vector}
\begin{split}
    \hat{p}_{i,j+1} &= \vect(\mathcal{H}(\mathcal{H}(\svec^{-1}(\Phi_M^\dagger\Psi_M\svec(\hat{P}_{i,j}) + \Phi_M^\dagger\Xi_M),0),\hat{K}_i))  \\
    & = \mathcal{T}^1(\Phi_M,\Psi_M,\hat{K}_i)\hat{p}_{i,j} + \mathcal{T}^2(\Phi_M,\Xi_M,\hat{K}_i),\quad \hat{p}_{i,0}=0,
\end{split}
\end{equation}
where $Y = \svec^{-1}(\svec(Y))$, for any $Y\in\mathbb{S}^n$, and
\begin{equation*}
\begin{split}
\mathcal{T}^1(\Phi_M,\Psi_M,\hat{K}_i) &= \left(\left[I_n,-\hat{K}^T_i\right]\otimes\left[I_n,-\hat{K}^T_i\right]\right)\left(\left[I_{m+n},0\right]\otimes\left[I_{m+n},0\right]\right)D_{m+n+1}\Phi_M^\dagger\Psi_MD_n^\dagger,    \\
\mathcal{T}^2(\Phi_M,\Xi_M,\hat{K}_i) &= \left(\left[I_n,-\hat{K}^T_i\right]\otimes\left[I_n,-\hat{K}^T_i\right]\right)\left(\left[I_{m+n},0\right]\otimes\left[I_{m+n},0\right]\right)D_{m+n+1}\Phi_M^\dagger\Xi_M.
\end{split}
\end{equation*}
Set $K=\hat{K}_i$ and insert \eqref{exact_param_estimation} into \eqref{policy_evaluation_model_based}. Similar derivations yield
\begin{equation}\label{policy_evaluation_exact_estimation_vector}
    \tilde{p}_{i,j+1} =
    \mathcal{T}^1(\varphi_1,\varphi_2,\hat{K}_i)\tilde{p}_{i,j} + \mathcal{T}^2(\varphi_1,\varphi_3,\hat{K}_i),\quad \tilde{p}_{i,0}=0.
\end{equation}
By Assumption \ref{assumption_PE}, \eqref{policy_evaluation_exact_estimation_vector} is identical to
\begin{equation}\label{policy_evaluation_exact_estimation}
    \tilde{p}_{i,j+1} = (A-B\hat{K}_i)^T\otimes(A-B\hat{K}_i)^T\tilde{p}_{i,j} + \vect(S + \hat{K}_i^TR\hat{K}_i),\quad \tilde{p}_{i,0}=0,
\end{equation}
with
\begin{equation}\label{data_model_identical}
    \mathcal{T}^1(\varphi_1,\varphi_2,\hat{K}_i) = (A-B\hat{K}_i)^T\otimes(A-B\hat{K}_i)^T,\quad \mathcal{T}^2(\varphi_1,\varphi_3,\hat{K}_i)= \vect(S + \hat{K}_i^TR\hat{K}_i).
\end{equation}
Since $\hat{K}_i\in\mathcal{F}$ is stabilizing,
\begin{equation}\label{policy_evaluation_converge}
    \lim_{j\rightarrow\infty} \tilde{P}_{i,j} = \tilde{P}_i
\end{equation} 
where $\tilde{p}_{i,j} = \vect(\tilde{P}_{i,j})$ and $\tilde{P}_i$ is the unique solution of \eqref{algebraic_Lyapunov_equation} with $K=\hat{K}_i$. By definition and Theorem \ref{theorem_global}, $\bar{\mathcal{F}}$ is bounded, thus compact. Let $\mathcal{V}$ be the set of unique solutions of \eqref{algebraic_Lyapunov_equation} with $K\in\mathcal{F}$. Then by Theorem \ref{theorem_global} $\mathcal{V}$ is bounded. So any control gain in $\bar{\mathcal{F}}$ is stabilizing, otherwise it contradicts the boundedness of $\mathcal{V}$. Define
$$\mathcal{F}_1 = \{(A-BK)^T\otimes(A-BK)^T\vert K\in\bar{\mathcal{F}}\}.$$
By continuity, $\mathcal{F}_1$ is a compact set of stable matrices, and there exists a $\delta_3>0$, such that any $X\in\bar{\mathcal{F}}_2$ is stable, where
$$\mathcal{F}_2 = \{X\vert X\in\mathcal{B}_{\delta_3}(Y),Y\in\mathcal{F}_1\}.$$
Define
$$\Delta\mathcal{T}^1_{M,i} = \mathcal{T}^1(\varphi_1,\varphi_2,\hat{K}_i) - \mathcal{T}^1(\Phi_M,\Psi_M,\hat{K}_i), \quad \Delta\mathcal{T}^2_{M,i} = \mathcal{T}^2(\varphi_1,\varphi_3,\hat{K}_i) - \mathcal{T}^2(\Phi_M,\Xi_M,\hat{K}_i).$$
The boundedness of $\mathcal{F}$, \eqref{ergodic_converge} and \eqref{data_model_identical} implies the existence of $M_1>0$, such that for any $M\geq M_1$, any $\hat{K}_i\in\mathcal{F}$, almost surely
\begin{equation}\label{belong_to_compact_stable_set}
    \mathcal{T}^1(\Phi_M,\Psi_M,\hat{K}_i)\in\bar{\mathcal{F}}_2, \quad \mathcal{T}^2(\Phi_M,\Xi_M,\hat{K}_i)< C_2,
\end{equation}
where $C_2>0$ is a constant.
Then \eqref{policy_evaluation_approximate_vector} admits a unique stable equilibrium, that is,
\begin{equation}\label{data_policy_evaluation_converge}
    \lim_{j\rightarrow\infty}\hat{P}_{i,j} = \hat{P}_i
\end{equation}
for some $\hat{P}_i\in\mathbb{S}^n$, and from \eqref{policy_evaluation_approximate_vector}, \eqref{policy_evaluation_exact_estimation_vector}, \eqref{policy_evaluation_converge} and \eqref{data_policy_evaluation_converge}, we have
\begin{equation*}
    \begin{split}
        \tilde{p}_i &= \vect(\tilde{P}_i) = \left( I_{n^2} - \mathcal{T}^1(\varphi_1,\varphi_2,\hat{K}_i) \right)^{-1}\mathcal{T}^2(\varphi_1,\varphi_3,\hat{K}_i), \\
        \hat{p}_i &= \vect(\hat{P}_i) = \left( I_{n^2} - \mathcal{T}^1(\Phi_M,\Psi_M,\hat{K}_i) \right)^{-1}\mathcal{T}^2(\Phi_M,\Xi_M,\hat{K}_i).
    \end{split}
\end{equation*}
Thus by \eqref{linear_perturbation}, for any $M\geq M_1$, any $\hat{K}_i\in\mathcal{F}$, almost surely
\begin{align*}
\Vert \hat{P}_i - \tilde{P}_i\Vert_F&\leq \left\Vert\left( I_{n^2} - \mathcal{T}^1(\varphi_1,\varphi_2,\hat{K}_i)\right)^{-1}\right\Vert_F\left(\Vert \Delta\mathcal{T}^2_{M,i} \Vert_2 +\right.\\
&\left.\left\Vert\left( I_{n^2} - \mathcal{T}^1(\Phi_M,\Psi_M,\hat{K}_i) \right)^{-1}\right\Vert_F\left\Vert\mathcal{T}^2(\Phi_M,\Xi_M,\hat{K}_i)\right\Vert_2\Vert\Delta\mathcal{T}^1_{M,i} \Vert_F\right) \\
&\leq c_8 \Vert \Delta\mathcal{T}^2_{M,i} \Vert_F + c_9\Vert\Delta\mathcal{T}^1_{M,i} \Vert_F
\end{align*}
where $c_8$ and $c_9$ are some positive constants, and the last inequality is due to \eqref{data_model_identical}, \eqref{belong_to_compact_stable_set} and the fact that $\mathcal{F}_1$ and $\bar{\mathcal{F}}_2$ are compact sets of stable matrices. Then for any $\epsilon_1>0$, the boundedness of $\mathcal{F}$ and \eqref{ergodic_converge} implies the existence of $M_2\geq M_1$, such that for any $M\geq M_2$, almost surely
\begin{equation}\label{PE_error_first_half}
    \Vert \hat{P}_i - \tilde{P}_i\Vert_F<\epsilon_1/2,
\end{equation}
as long as $\hat{K}_i\in\mathcal{F}$. By Lemma \ref{uniform_exp_stab_over_compact_set} and \eqref{belong_to_compact_stable_set}, for any $M\geq M_2$ and any $\hat{K}_i\in\mathcal{F}$,
$$\Vert \hat{p}_{i,j+1}-\hat{p}_i\Vert_2\leq a_0b_0^k\Vert \hat{p}_i\Vert_2\leq a_1b_0^k,$$
for some $a_0>0$, $1>b_0>0$ and $a_1>0$. Therefore there exists a $T_1>0$, such that for any $T\geq T_1$, and any $M\geq M_2$, almost surely
\begin{equation}\label{PE_error_second_half}
    \Vert \hat{P}_{i,T}-\hat{P}_i\Vert_F<\epsilon_1/2,
\end{equation}
as long as $\hat{K}_i\in\mathcal{F}$. Synthesizing \eqref{PE_error_first_half} and \eqref{PE_error_second_half} yields
\begin{equation}\label{triangle_inequality_3}
    \Vert \hat{P}_{i,T} - \tilde{P}_i\Vert_F<\epsilon_1,
\end{equation}
almost surely for any $T\geq T_1$, any $M\geq M_2$, as long as $\hat{K}_i\in\mathcal{F}$. Since $\epsilon_1$ is arbitrary, we can choose $\epsilon_1$ such that almost surely
$$\Vert \hat{P}_{i,T}-\tilde{P}_i\Vert_F<\epsilon_1\leq\frac{\delta_2}{3}.$$

Secondly, for term $\Vert Q(\hat{P}_{i,T}) - Q(\tilde{P}_i)\Vert_F$, by \eqref{triangle_inequality_3} there exist $T_2\geq T_1$, $M_3\geq M_2$, such that almost surely
\begin{equation*}
    \Vert Q(\hat{P}_{i,T}) - Q(\tilde{P}_i)\Vert_F<\frac{\delta_2}{3}
\end{equation*}
for any $T\geq T_2$, any $M\geq M_3$, as long as $\hat{K}_i\in\mathcal{F}$.

Finally, since $\mathcal{V}$ is bounded, by \eqref{triangle_inequality_3} $\hat{P}_{i,T}$ is also almost surely bounded. Thus from lines \ref{algorithm_estimate_final_Q_1} to \ref{algorithm_estimate_final_Q_2} in Algorithm \ref{algorithm_off_policy_stochastic}, and \eqref{ergodic_converge}, there exists $M_4\geq M_3$, such that
$$\Vert \hat{Q}_{i,T} - Q(\hat{P}_{i,T})\Vert_F<\frac{\delta_2}{3}$$
for any $M\geq M_4$ and any $T\geq T_2$, as long as $\hat{K}_i\in\mathcal{F}$.

Setting $T_0 = T_2$ and $M_0 = M_4$ yields $\Vert\Delta G_i\Vert<\delta_2$.
\end{proof}
Now we are ready to prove Theorem \ref{theorem_stochastic_algorithm_convergence}.
\begin{proof}
Since $\hat{K}_1\in\mathcal{F}$, Lemma \ref{Lemma_policy_evaluation_error_bounded} implies $\Vert\Delta G_1\Vert_F<\delta_2$ almost surely. By definition, $\hat{K}_2\in\mathcal{F}$. Thus $\Vert\Delta G_i\Vert_F<\delta_2, i=1,2,\cdots$ almost surely by mathematical induction. Then Theorem \ref{theorem_global} completes the proof.
\end{proof}

\section{Appendix F: Experiments (Cont'd)}\label{Appendix_experiments}
In the experiments of Figure \ref{exp_noise_mag}, initial control gain is $\hat{K}_1 = 0_{2\times 3}$, number of policy iterations $N=5$, number of iterations for policy evaluation $T=45$. The three features we use to evaluate the performances of the algorithms (fraction stable, relative error (Avg.) and Relative Error (Var.)) in Figure \ref{exp_noise_mag} are computed in the same way to those in Figure \ref{exp_results}.
\begin{figure}[!htb]
    \centering
    \includegraphics[width=\linewidth]{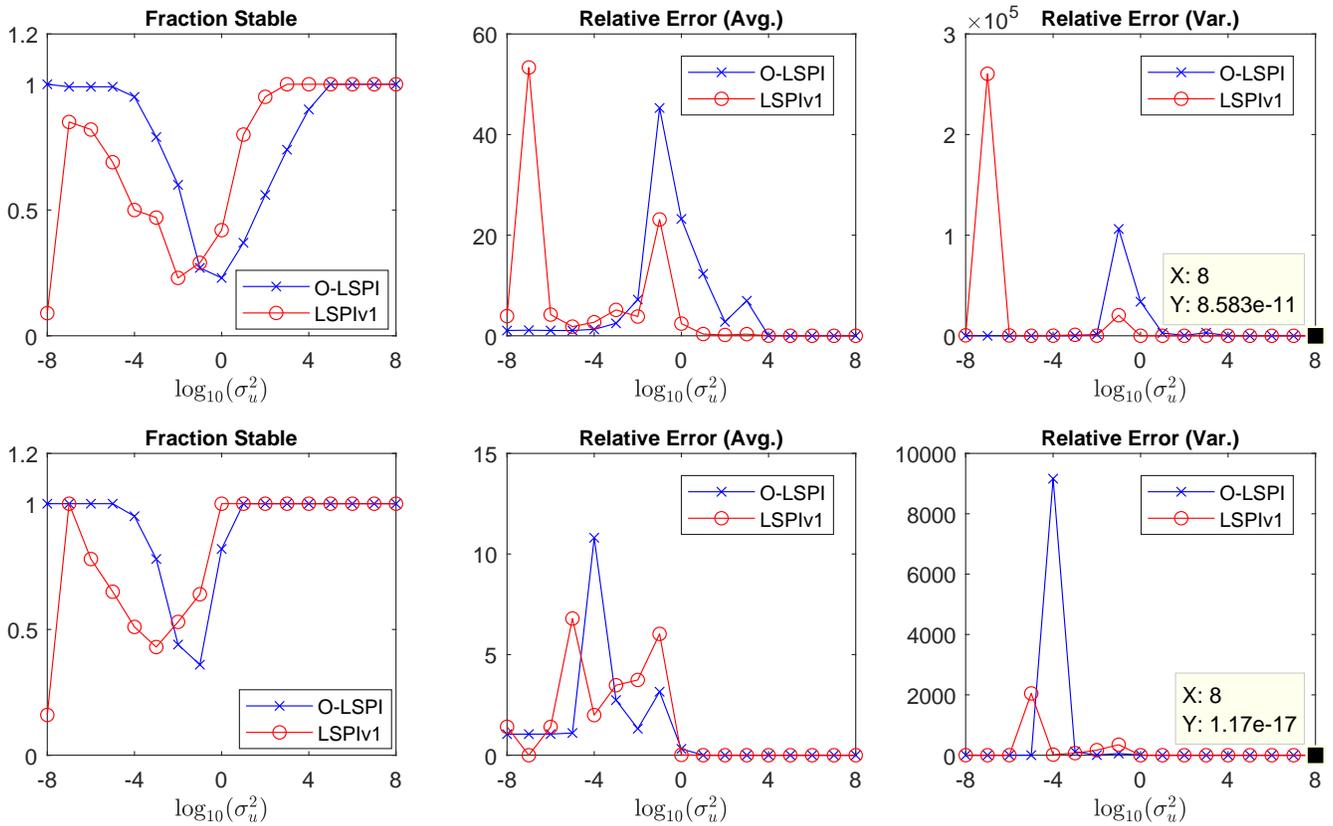}
    \caption{The performances of O-LSPI and LSPIv1 on the dynamics of \cite{krauth2019finite} with different values of exploration variances $\sigma^2_u$. In the upper row $M=10^2$ while in the lower row $M=10^4$.}\label{exp_noise_mag}
\end{figure}

\end{document}